\newcommand{\remove}[1]{}
\newcommand{\im}{\operatorname{Im}}
\newcommand{\parens}[1]{\left( #1 \right)}
\newcommand{\sparens}[1]{\left[ #1 \right]}
\newcommand{\innerprod}[1]{\left< #1 \right>}
\newcommand{\set}[1]{\left\{ #1 \right\}}
\newcommand{\cochain}[1]{\MakeUppercase{#1}}
\newcommand{\cochainset}[2]{C^{#1}\parens{#2}}
\newcommand{\stdcochain}{\cochain{f}}
\newcommand{\gencochain}{\cochain{g}}
\DeclareMathOperator{\weightletter}{w}
\newcommand{\skeleton}[2]{#1^{\parens{#2}}}
\newcommand{\stdcomplex}{X}
\newcommand{\face}[1]{\MakeLowercase{#1}}
\newcommand{\vertex}[1]{\MakeLowercase{#1}}
\newcommand{\stdvertex}{\vertex{v}}
\newcommand{\genvertex}{\vertex{u}}
\newcommand{\stdface}{\face{\sigma}}
\newcommand{\genface}{\face{\tau}}
\NewDocumentCommand{\upoperator}{O{k}}{U^{#1}}
\NewDocumentCommand{\downoperator}{O{k}}{D^{#1}}
\NewDocumentCommand{\walkoperator}{O{k} o}{\IfNoValueTF{#2}{\Delta_{#1}}{\Delta_{#1, #2}}}
\NewDocumentCommand{\lapwalkoperator}{O{k} o}{\IfNoValueTF{#2}{\mathscr{L}_{#1}}{\mathscr{L}_{#1, #2}}}
\newcommand{\coboundaryoperator}{\delta}
\NewDocumentCommand{\weight}{O{k} o}{\IfNoValueTF{#2}{\weightletter \parens{ #1 }}{\weightletter_{#1}\parens{#2}}}
\newcommand{\indicator}[1]{\mathbbm{1}_{#1}}
\newcommand{\abs}[1]{\left| #1 \right|}
\newcommand{\norm}[1]{\lVert #1 \rVert}
\DeclareMathOperator{\ex}{\mathbbm{E}}
\newcommand{\ev}[2]{\ex_{#1}{\sparens{#2}}}
\newtheorem{theorem}{Theorem}[section]
\newtheorem{lemma}[theorem]{Lemma}
\newtheorem{definition}[theorem]{Definition}
\newtheorem{claim}[theorem]{Claim}
\newtheorem{conjecture}[theorem]{Conjecture}
\newtheorem{proposition}[theorem]{Proposition}
\newtheorem{corollary}[theorem]{Corollary}
\newcommand{\R}{\mathbb{R}}
\newtheorem{thm}{Theorem}[section]
\newcommand{\veps}{\varepsilon}
\newcommand{\suchthat}{\,:\,}
\newcommand{\where}{\,|\,}
\newcommand{\Ham}{Ham}
\NewDocumentCommand{\infullversion}{m m}{\IfBooleanTF{\BooleanFalse}{#1}{#2}}
\begin{document}

\volume{?} 

\title{No Where to Go But High: \\ A Perspective on High Dimensional Expanders}

\emsauthor{1}{Roy Gotlib}{R.~Gotlib}
\emsauthor{2}{Tali Kaufman}{T.~Kaufman}

\emsaffil{2}{Department of Computer Science, Bar-Ilan University, Ramat-Gan, 5290002, Israel\email{kaufmant@mit.edu}}
\emsaffil{1}{Department of Computer Science, Bar-Ilan University, Ramat-Gan, 5290002, Israel\email{roy.gotlib@gmail.com}}

\begin{abstract}
"No Where to go but in" is a well known statement of Osho. Osho meant to say that the answers to all our questions should be  obtained by looking into ourselves. In a paraphrase to Osho's statement we say "No Where to go but high". This meant to demonstrate that for various seemingly unrelated topics and questions the only way to get significant progress is via the prism of a new philosophy (new field) called high dimensional  expansion. In this note we give an introduction \footnote{This introduction reflects the authors' interests and by no mean claim to represent the field in a through way} to the high dimensional expansion philosophy, and how it has been useful recently in obtaining progress in various questions in seemingly unrelated fields.

\bigskip 

\begin{center}
This exposition is dedicated to the memory of my mother, Sarah Kaufman, who was always trying to understand the reason why things behave in a certain way. 

It is also dedicated to the memory of my father Eliezer Kaufman.
\end{center}
\end{abstract}

\maketitle

\newpage

\section{Introduction}

What is common to the following very diverse and important themes:  quantum codes, counting bases of matroids, locally testable classical codes, fast mixing of Markov chains and Gromov's topological overlapping question.
Even if you have not heard of some/any of these mentioned topics, it is clear that these topics emerge from completely different branches of mathematics and computer science, and hence do not seem particularly related.

The purpose of this note is to highlight the idea that all the above mentioned topics, despite seeming unrelated up until recently, are in fact strongly related via a new perspective that is obtained by introducing a new object called a \emph{high dimensional expander}.
This object created inherent, deep relations between topics that previously seemed unrelated. This new perspective, as well as new connections between different problems via the idea of high dimensional expanders, had recently led to various important advances on the above topics and beyond.

Our goal, in this note is \emph{not} to serve as a survey on high dimensional expanders, but rather to give \emph{our own} perspective on this newly emerged object, and its connections/strong implications to the above topics. Namely, to highlight recent advances on the said topics using the new perspective of high dimensional expanders.

We intend to highlight how this newly emerged object (or maybe newly emerged philosophy) is, in fact, tightly related to the above notions and beyond. High dimensional expansion has different angles and these different angles enable one to relate these diverse set of questions/phenomena.

Our aim here is to present what high dimensional expanders are and how these newly defined objects give a \emph{unified} perspective of the topics mentioned above that prior to the introduction of high dimensional expanders, seemed unrelated.

High dimensional expanders, as we will see, are generalizations of graph expanders to higher dimensions. But, as we will see, the importance of high dimensional expanders does \emph{not} stem from the fact that they generalize expander graphs to higher dimensions, but rather from the fact that when objects exhibit expansion in higher dimensions they also have strong form of \emph{local to global} properties/nature.

This local to global behaviour that high dimensional expanders exhibit is unique to the high dimensional case, in the sense that it is not present in one dimensional expanders.
Indeed, this local to global philosophy is what makes them so powerful and so connected to the various topics mentioned above.

Our focus here is not on rigorous proofs, but rather on presenting different angles of the high dimensional expansion philosophy, with and their recent implications for various unrelated fields.

\paragraph{Structure of this note.} The structure of this note will be as follows. We will start  by introducing basic facts about expander graphs, with specific focus on weighted graphs which are essential to the theory of high dimensional expanders (see Section~\ref{Sec:expander-grpahs}). Then, in Section~\ref{Sec:high -dim-expanders}, we will move to introduce the object which is the focus of this note: A high dimensional expander; We will highlight the fact that there is a notion of high dimensional topological expansion, which generalises the Cheeger constant for graphs, and high dimensional spectral expansion that generalizes the spectral expansion of graphs. In graphs the Cheeger inequality says that the topological expansion is, in a sense, equivalent to spectral expansion. In higher dimensions, however, a Cheeger inequality does \emph{not} hold (the spectral definition does not imply the topological definition and vise versa) . This demonstrates some of the deepness of the high dimensional expansion phenomenon.

We then turn to discuss high dimensional random walks (see Section \ref{sec:high-dim-RWs}). We define what are high dimensional random walks and show that if all links (i.e. local neighborhoods) of the high dimensional expander are expanding enough then high dimensional random walks mix rapidly.

We follow that discussion by a discussion of the local-to-global aspects of high dimensional expanders - aspects which are non existent in graph expansion.
The local-to-global implication will hold (via different proofs) for spectral expansion and for topological expansion.
We will show that a high dimensional simplicial complex whose all local links (i.e. local neighborhoods) are spectraly/topologically expanding must be a global spectral/topological high dimensional expander (see Section \ref{Sec:local-to-global-spectral-expansion} for local-to-global expansion in the spectral sense, and see Section \ref{Sec:local-to-global-topological-expansion} for local-to-global expansion in the topological sense).

Using the local-to-global premise and the fact that we have fast mixing of high dimensional random walks if all the links are sufficiently expanding, We will deduce fast mixing of high dimensional random walks from local expansion in very local neighborhoods.

Then, in Section \ref{Sec:LTCs}, we move to show that high dimensional expansion is a form of local testability of codes and use that towards results on local testability of codes.

We then, in Section \ref{Sec:QLDPC-codes}, demonstrate that local testability of classical codes and quantum LDPC codes are both \emph{born together} from a high dimensional expander.
Thus, we show that classical locally testable codes and quantum LDPC codes are connected together via the high dimensional expansion perspective.
Prior to the introduction of high dimensional expanders, these two objects were not known to be related.
We will survey some of the major recent developments in these fields that emerge from this recent viewpoint that connects them both to high dimensional expanders.

We then turn to discuss the Gromov Topological overlapping problem; its discovered relation to classical local testability and its solution via high dimensional expanders and their connection to locally testable codes (see Section \ref{sec:TOP}).  

We will end (see Section \ref{sec:sampling-matroids}) by introducing the Mihail-Vazirani Conjecture about counting bases of matroids that recently was resolved via high dimensional expansion.
We will introduce the conjecture and will show a \emph{rigorous} proof of its resolution via local to global theorems on high dimensional expanders and high dimensional random walks.

In the end of our note (see Section \ref{Sec:additional}) we mention some topics that are not covered by our note.
\section{Some Classical Facts About Expander Graphs}\label{Sec:expander-grpahs}

Before we discuss high dimensional expanders, we need a solid foundation in the original theory of expander graphs. Broadly speaking, there are two types of expanders: combinatorial expanders, and spectral expanders. The former generally refers to graphs $G=(V,E)$ where all subsets $S \subset V$ \textit{expand outward} in some sense (this includes definitions such as edge expansion, vertex expansion, or unique neighbor expansion). The latter is somewhat more of a technical condition: it requires that all ``non-trivial'' eigenvalues of $G$'s adjacency matrix be of bounded size. While a priori it is not obvious that these two notions of expansion are connected, it is well known that they are (at least morally) equivalent (see e.g. discussion of Cheeger's inequality and the expander-mixing lemma in \cite{hoory06}).

\subsection{Weighted Graphs}
In order to introduce expander graph in a way consistent with their higher dimensional counterparts we begin by introducing weighted graphs.
We assume that there are no isolated vertices and define weighted graphs as graphs that are equipped with a weight function for the edges that satisfies the following condition:
\begin{definition}[Weight function for the edges]
Let $G=(V,E)$.
A function $\weightletter_E:E \rightarrow (0,1]$ is a weight function for the edges if $\sum_{e \in E}{\weight[E][e]}=1$.
\end{definition}
The weight function for the edges of the graph induces a weight function over the vertices of the graph in the following way:
\begin{definition}[Weight function for the vertices]
Let $G=(V,E)$ and let $\weightletter_E$ be a weight function for the edges of the graph.
Then the following $\weightletter_V : V \rightarrow (0,1]$ is the induced weight function on the vertices of the graph:
\[
    \weight[V][v] = \sum_{\substack{e \in E \\ v \in e}}{\frac{1}{2}\weight[E][e]}
\]
\end{definition}
We note that this weight function over the vertices also sums up to $1$ (and therefore both weight functions define a probability distribution over the edges/vertices).
We can now use these two functions to define the weight function for the graph:
\begin{definition}[Weight function for a graph]
Let $G=(V,E)$ be a graph and let $\weightletter_E$ be a weight function over the edges of the graph.
Define the weight function over the graph $\weightletter:\set{\emptyset} \cup V \cup E \rightarrow \mathbbm{R}$ to be\infullversion{:
\[
\weight[\genface] = \begin{cases}
1 & \genface = \emptyset \\
\weight[V][\genface] & \genface \in V \\
\weight[E][\genface] & \genface \in E
\end{cases}
\]
}{ the function that satisfies $\weightletter(\emptyset)=1$, $\weightletter|_V=\weightletter_V$ and $\weightletter|_E=\weightletter_E$.}
\end{definition}
\subsection{Spectral Expansion of Graphs}\label{subsec:graphs:spectral-expansion}
Let us start by describing the spectral notion of expansion.
We will start by going over some basic spectral properties of weighted graphs.
To start, we need to define the main object of interest, the adjacency matrix.
\begin{definition}[Adjacency Matrix]
The adjacency matrix of a weighted graph $G=(V,E)$ with a weight function $\weightletter$ is:
\[
A_{v,w} = \frac{\weight[\set{v,w}]}{2 \weight[v]}
\]
\end{definition}
One natural (and useful) way to think about the adjacency matrix is as the transition matrix\footnote{Technically the transpose, as we'll discuss later.} of the random walk underlying $G$ which moves from a vertex $v$ to vertex $w$ with probability corresponding to the edge weights incident to $v$. Indeed, much of this survey will focus on standard connections between properties of this walk and the eigenvalues of $A$, and how they generalize to higher dimensions. First, however, we need a few basic spectral facts about the adjacency matrix $A$.
\begin{proposition}
Let $G=(V,E)$ be a weighted graph, and $A_G$ its corresponding adjacency matrix. Then $A_G$ has a spectral decomposition (i.e.\ an orthonormal basis of eigenvectors).
\end{proposition}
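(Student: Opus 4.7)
The plan is to note at the outset that $A_G$ as defined is \emph{not} symmetric in the standard sense (the $(v,w)$ entry is $\weight[\{v,w\}]/(2\weight[v])$, while the $(w,v)$ entry has $\weight[w]$ in its denominator), so an orthonormal basis with respect to the usual inner product on $\mathbb{R}^V$ is not what one can hope for directly. Instead, the ``correct'' inner product to work with is the weighted one,
\[
\langle f,g\rangle_\weightletter \;=\; \sum_{v \in V} \weight[v]\, f(v)\, g(v),
\]
which is positive definite because the assumption that $G$ has no isolated vertices forces $\weight[v] > 0$ for every $v$. With respect to this inner product one should expect an orthonormal eigenbasis.

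First I would verify that $A_G$ is self-adjoint with respect to $\langle\cdot,\cdot\rangle_\weightletter$. Unpacking the definition of the adjacency matrix,
\[
\langle A_G f, g\rangle_\weightletter \;=\; \sum_{v,w} \weight[v]\, \frac{\weight[\{v,w\}]}{2\weight[v]}\, f(w)\, g(v) \;=\; \sum_{\{v,w\}\in E} \tfrac{1}{2}\weight[\{v,w\}]\bigl(f(w)g(v)+f(v)g(w)\bigr),
\]
which is manifestly symmetric in the roles of $f$ and $g$, giving $\langle A_G f,g\rangle_\weightletter = \langle f, A_G g\rangle_\weightletter$. Equivalently, if $D$ is the diagonal matrix with entries $\weight[v]$, then $D A_G$ has $(v,w)$-entry $\weight[\{v,w\}]/2$, hence is symmetric, and this is precisely the statement that $A_G$ is self-adjoint in the $\weightletter$-weighted inner product.

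From here I would invoke the real spectral theorem. One concrete way is to conjugate: the matrix $S := D^{1/2} A_G D^{-1/2} = D^{-1/2}(DA_G)D^{-1/2}$ is a genuine real symmetric matrix, so by the classical spectral theorem it admits an orthonormal eigenbasis $\{u_i\}$ with real eigenvalues $\lambda_i$ in the standard inner product on $\mathbb{R}^V$. Setting $f_i := D^{-1/2} u_i$ gives $A_G f_i = \lambda_i f_i$, and
\[
\langle f_i, f_j\rangle_\weightletter \;=\; (D^{1/2} f_i)^\top (D^{1/2} f_j) \;=\; u_i^\top u_j \;=\; \delta_{ij},
\]
so $\{f_i\}$ is an orthonormal eigenbasis of $A_G$ in the weighted inner product.

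There is no real obstacle here beyond bookkeeping; the only subtle point is to flag for the reader that throughout this survey ``orthonormal'' for $A_G$ always refers to the $\weightletter$-weighted inner product, since this is the inner product natural to the random walk interpretation and the one that will generalize to the weighted inner products on higher-dimensional cochains later in the note.
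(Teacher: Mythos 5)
Your proof is correct and takes essentially the same route as the paper: introduce the $\weightletter$-weighted inner product, check self-adjointness of $A_G$ there, and invoke the spectral theorem. You simply carry out the "simple computation" the paper leaves implicit and add the standard $D^{1/2}$-conjugation argument as an explicit way to reduce to the symmetric case, which the paper omits.
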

\begin{proof}
This follows from the Cauchy's celebrated spectral theorem: that any self-adjoint operator can be diagonalized.
The trick is then to find an inner-product space over which $A_G$ is self-adjoint.
One can show that this is the case for the standard inner product normalized by the distribution $\weightletter$ induces over vertices.
For $f,g: V \to \mathbbm{R}$, let
\[
\langle f,g \rangle = \sum\limits_{v \in V}\weight[v]f(v)g(v)
\]
A simple computation verifies that $\langle Af,g \rangle = \langle f, Ag \rangle$.
\end{proof}
Now that we know $A_G$ has a well-defined spectrum we can examine properties of its eigenvalues. For a graph $G$ on $n$ vertices, denote the eigenvalues of $A_G$ in decreasing order by $\lambda_1 \geq \ldots \geq \lambda_n$. For our purposes, we are most interested in two basic properties of these eigenvalues.
\begin{claim}
Let $G=(V,E)$ be a weighted graph, and $A_G$ its corresponding adjacency matrix with eigenvalues $\lambda_1 \geq \ldots \geq \lambda_n$. Then (1) The all $1$'s vector is an eigenvector satisfying $A_G \mathbbm{1} = \mathbbm{1}$ and (2) This is also the maximal eigenvalue (in absolute value): \\$1= \lambda_1 \geq \lambda_2 \ldots \geq \lambda_n \geq -1$
\end{claim}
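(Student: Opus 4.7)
The plan is to handle the two parts separately, first verifying the eigenvector/eigenvalue pair by direct computation, then deducing the bound $|\lambda_i| \leq 1$ from the fact that $A_G$ is (row) stochastic.

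For part (1), I would unwind the definitions. The $v$-th entry of $A_G \mathbbm{1}$ is
\[
(A_G \mathbbm{1})_v = \sum_{w \in V} A_{v,w} = \sum_{\substack{w \in V \\ \{v,w\} \in E}} \frac{\weight[\{v,w\}]}{2\weight[v]}.
\]
Pulling the $\frac{1}{2\weight[v]}$ out of the sum and recalling from the definition of $\weightletter_V$ that $\weight[v] = \sum_{e \ni v} \tfrac{1}{2}\weight[e]$, the numerator sum equals $2\weight[v]$ exactly. Hence $(A_G \mathbbm{1})_v = 1$ for every $v$, which establishes $A_G \mathbbm{1} = \mathbbm{1}$ and shows that $1$ is an eigenvalue.

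For part (2), the key observation is that the matrix $A_G$ has nonnegative entries and, by part (1), rows summing to $1$; it is a (right) stochastic matrix describing the random walk mentioned in the text. I would then run the standard maximum-modulus argument: pick any eigenvalue $\lambda$ with eigenvector $f$, and let $v^\ast \in V$ be a vertex maximizing $|f(v^\ast)|$. Then
\[
|\lambda|\,|f(v^\ast)| = \Bigl|\sum_{w} A_{v^\ast, w} f(w)\Bigr| \leq \sum_{w} A_{v^\ast, w} |f(w)| \leq |f(v^\ast)| \sum_w A_{v^\ast, w} = |f(v^\ast)|,
\]
so $|\lambda| \leq 1$. Combined with part (1), this gives $\lambda_1 = 1$ and $\lambda_n \geq -1$.

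There is really no serious obstacle here; the only thing to be careful about is the factor of $\tfrac{1}{2}$ in the definition of $\weightletter_V$, which is precisely what makes the rows of $A_G$ sum to $1$ (this factor accounts for each edge being counted from both endpoints when passing between edge and vertex weights). Once that bookkeeping is done, everything follows from classical stochastic-matrix reasoning, with no need to invoke the self-adjointness established in the preceding proposition.
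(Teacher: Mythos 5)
Your proof is correct; the paper actually states this claim without any proof, so there is no in-text argument to compare against. Part (1) correctly unwinds the factor of $\tfrac{1}{2}$ in the induced vertex weight to show that the rows of $A_G$ sum to $1$, and part (2) is the standard maximum-modulus bound for a nonnegative matrix with unit row sums.

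One small caveat on your closing remark that there is ``no need to invoke the self-adjointness established in the preceding proposition'': the maximum-modulus argument only shows $|\lambda| \leq 1$ for every eigenvalue, possibly complex. To state the conclusion as a real decreasing chain $1 = \lambda_1 \geq \lambda_2 \geq \cdots \geq \lambda_n \geq -1$ you are still relying on the fact that the spectrum of $A_G$ is real, which is exactly what the preceding proposition (self-adjointness with respect to the $\weightletter$-weighted inner product) provides. So self-adjointness is still doing work, just in a hidden way: it is what licenses the ordering in the first place. With that understanding, the argument is complete.
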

It is well known that a graph $G$ is \emph{connected} if and only if $\lambda_2 < 1$.
Spectral expansion studies a strengthening of this condition, when the second eigenvalue is \emph{bounded away from $1$} by some constant.
\begin{definition}[Weighted Spectral Expansion]
We say a weighted graph $G=(V,E)$ is a (one-sided) $\lambda$-spectral expander if $\lambda_2 \leq \lambda$.
The quantity $1-\lambda_2$ is often called the \emph{spectral gap}, and is an equivalent way to express one-sided spectral expansion.

\end{definition}
At the beginning of this section, we mentioned that spectral expansion is intimately tied to combinatorial expansion. Intuitively, this follows from the fact that the latter can be viewed as a sort of robust connectivity.
Thought of in this manner, it is no surprise that since $\lambda_2 < 1$ enforces that $G$ is connected, smaller $\lambda_2$ corresponds to stronger notions of connectivity akin to combinatorial expansion.

\subsection{Toplogical expansion of graphs}
Informally, a finite graph is called an expander if relatively many edges cross between every set of vertices and its complement.

If $G$ is a weighted graph with $\weightletter$ as its weight function (one can take $w_E = \frac{1}{\abs{E}}$ to ignore the weights).
Then one can define the following norm over sets of vertices/edges (or the indicator of such sets):
\[ \norm{S}_w = \sum_{s \in S}{w(s)}, \quad \norm{\indicator{S}}_w = \norm{S}_w \]
In this case, the topological expansion of $(G,w)$ is quantified by its \emph{Cheeger constant}, defined as
\[h(G,w) = \min_{\emptyset\neq S\subsetneq V}{\frac{\norm{\delta \indicator{S}}_w}{\min{\set{\norm{\indicator{S}}_w, \norm{\indicator{V \setminus S}}_w}}}}\]
Where $\delta \indicator{S}$ is a function that accepts an edge $\set{u,v}$ and returns $1$ iff $\indicator{S}(u)+\indicator{S}(v) = 1$ (the sum is performed modulo 2). 
Note that the numerator of the Cheeger constant is the norm of the edges that connect $S$ and $V \setminus S$. One says that $(G,w)$ is an $\veps$-expander if $h(G, \weightletter)\geq \veps$.

We know that if a weighted graph $(G,w)$ is a $\lambda$-spectral expander, then $(G,w)$ is has a good Cheeger constant.
We call such inequalities Cheeger ineqalities and in the following establish a weighted version of the Cheeger Inequality.
Recall that given a weighted graph $(G,w)$ and $A \subseteq V$, we write $\indicator{A}$ for the indicator function of $A$.

\begin{thm}[Weighted Cheeger Inequality]
	\label{TH:Cheeger}
	Let $(G,\weightletter)$ be a weighted graph which is also a $\lambda$-spectral-expander and let $A\subseteq V$.
	Then due to~\cite[Theorem 4.4]{kaufman2021high}:
	\[
	\norm{\delta \indicator{A}}_{\weightletter} \ge 2 \parens{1-\lambda}\norm{\indicator{A}}_{\weightletter}\parens{1-\norm{\indicator{A}}_{\weightletter}}
	\]
	In addition, due to~\cite[Theorem 2.1]{friedland2002cheeger}, if $h\parens{G,\weightletter}\ge \epsilon$ then $(G, \weightletter)$ is a $\sqrt{1-\frac{\epsilon^2}{4}}$-spectral-expander.
\end{thm}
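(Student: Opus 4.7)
The plan is to prove the two inequalities separately, both resting on the Dirichlet identity
\[
\norm{\delta \indicator{A}}_{\weightletter} \;=\; \sum_{\{u,v\}\in E} \weightletter(\{u,v\})\bigl(\indicator{A}(u)-\indicator{A}(v)\bigr)^2 \;=\; 2 \langle \indicator{A},(I-A)\indicator{A}\rangle,
\]
where $\langle\cdot,\cdot\rangle$ is the weighted inner product from the spectral-theorem proof. The first equality is immediate since the summand equals $1$ precisely on the cut; the second is a direct calculation using $\weightletter(v)=\tfrac12\sum_{e\ni v}\weightletter(e)$.

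For the forward implication, I would write $\indicator{A} = \mu\mathbbm{1} + g$ with $\mu = \norm{\indicator{A}}_{\weightletter}$ and $g\perp\mathbbm{1}$. Since $\indicator{A}^2=\indicator{A}$ we have $\langle\indicator{A},\indicator{A}\rangle=\mu$, hence $\langle g,g\rangle=\mu-\mu^2=\norm{\indicator{A}}_{\weightletter}(1-\norm{\indicator{A}}_{\weightletter})$. Because $(I-A)\mathbbm{1}=0$, we get $\langle\indicator{A},(I-A)\indicator{A}\rangle=\langle g,(I-A)g\rangle$, and the variational characterization of $\lambda_2$ on $\mathbbm{1}^\perp$ together with $\lambda_2\leq\lambda$ gives $\langle g,(I-A)g\rangle\geq(1-\lambda)\langle g,g\rangle$. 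Substituting back into the Dirichlet identity finishes this direction.

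For the reverse implication, I would implement the classical Cheeger sweep in the weighted inner product. Let $f$ be a $\lambda_2$-eigenvector; after a possible sign flip, assume $V_+:=\set{v\suchthat f(v)>0}$ has $\weightletter$-weight at most $1/2$, and set $g:=f\cdot\indicator{V_+}$. Since $A$ has nonnegative entries and $f\leq 0$ on $V\setminus V_+$, a short computation using $Af=\lambda_2 f$ yields $\langle g,Ag\rangle\geq\lambda_2\langle g,g\rangle$, equivalently $\mathcal{E}(g,g)\leq 2(1-\lambda_2)\langle g,g\rangle$, where $\mathcal{E}(g,g):=\sum_{\{u,v\}}\weightletter(\{u,v\})(g(u)-g(v))^2$. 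Now pick $t$ uniformly in $[0,\max_v g(v)^2]$ and let $S(t):=\set{v\suchthat g(v)^2\geq t}$; each $S(t)\subseteq V_+$ has weight at most $1/2$, so $\norm{\delta\indicator{S(t)}}_{\weightletter}\geq h(G,\weightletter)\norm{\indicator{S(t)}}_{\weightletter}$ pointwise in $t$. The layer-cake identities
\[
\int_0^{\max_v g(v)^2}\norm{\delta\indicator{S(t)}}_{\weightletter}\,dt = \sum_{\{u,v\}}\weightletter(\{u,v\})\abs{g(u)^2-g(v)^2}, \quad \int_0^{\max_v g(v)^2}\norm{\indicator{S(t)}}_{\weightletter}\,dt = \langle g,g\rangle,
\]
combined with Cauchy--Schwarz on $\abs{g(u)^2-g(v)^2}=\abs{g(u)-g(v)}\cdot\abs{g(u)+g(v)}$ and the algebraic identity $\sum\weightletter(\{u,v\})(g(u)+g(v))^2=4\langle g,g\rangle-\mathcal{E}(g,g)$, then yield
\[
h(G,\weightletter)^2\langle g,g\rangle^2 \leq \mathcal{E}(g,g)\bigl(4\langle g,g\rangle-\mathcal{E}(g,g)\bigr).
\]
Since $\alpha\mapsto\alpha(4-\alpha)$ is increasing on $[0,2]$ and $\mathcal{E}(g,g)/\langle g,g\rangle\leq 2$ (because $g\geq 0$), substituting the bound $\mathcal{E}(g,g)\leq 2(1-\lambda_2)\langle g,g\rangle$ gives $h(G,\weightletter)^2\leq 4(1-\lambda_2^2)$, that is, $\lambda_2\leq\sqrt{1-h(G,\weightletter)^2/4}$.

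The main obstacle is the reverse direction, and specifically calibrating the Cauchy--Schwarz step so that the exact constant $\sqrt{1-\epsilon^2/4}$ emerges: $\mathcal{E}(g,g)$ must be paired with its algebraic complement $4\langle g,g\rangle-\mathcal{E}(g,g)$, producing a quadratic inequality in $1-\lambda_2^2$ rather than $1-\lambda_2$; a naively executed sweep yields only the weaker Cheeger bound $1-\lambda_2\geq h^2/8$. The weighted aspect itself is essentially bookkeeping---once the Dirichlet identity is established, every subsequent manipulation (truncation, layer-cake, Cauchy--Schwarz, and the monotonicity step) passes through with $\weightletter$-weighted sums verbatim.
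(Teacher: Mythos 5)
Your proof is correct, but a direct comparison to ``the paper's own proof'' is not possible here: the paper does not prove Theorem~\ref{TH:Cheeger} at all --- it simply states the two inequalities and cites \cite[Theorem 4.4]{kaufman2021high} and \cite[Theorem 2.1]{friedland2002cheeger}. You have supplied a complete, self-contained argument, and it is the standard one underlying those references: the weighted Dirichlet identity $\norm{\delta\indicator{A}}_{\weightletter}=2\innerprod{\indicator{A},(I-A)\indicator{A}}$, the orthogonal decomposition $\indicator{A}=\mu\mathbbm{1}+g$ together with the Rayleigh quotient on $\mathbbm{1}^{\perp}$ for the easy (expander-mixing) direction, and for the converse the Fiedler truncation $g=f\cdot\indicator{V_+}$ followed by the sweep/layer-cake/Cauchy--Schwarz argument paired with the algebraic complement $4\innerprod{g,g}-\mathcal{E}(g,g)$ to recover the sharp constant $\sqrt{1-\epsilon^2/4}$. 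All the steps check out against the paper's conventions (the inner product $\innerprod{f,g}=\sum_v\weightletter(v)f(v)g(v)$, the normalization $A_{v,w}=\weightletter(\set{v,w})/2\weightletter(v)$, the vertex weights summing to $1$, and the Cheeger constant using $\min\set{\norm{\indicator{S}},\norm{\indicator{V\setminus S}}}$ in the denominator).

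One small point worth making explicit in the reverse direction: when you invoke monotonicity of $\alpha\mapsto\alpha(4-\alpha)$ on $[0,2]$ and substitute $\alpha\le 2(1-\lambda_2)$, you are implicitly assuming $\lambda_2\ge 0$ so that $2(1-\lambda_2)\le 2$. This costs nothing --- if $\lambda_2<0$ then $\lambda_2\le\sqrt{1-\epsilon^2/4}$ holds trivially since the right-hand side is nonnegative --- but the case split should be stated. You should also note that the truncated vector $g$ is nonzero (hence $\innerprod{g,g}>0$ and the division is legitimate): since $f\perp\mathbbm{1}$ and $f\ne 0$, $f$ must take both signs, so $V_+$ and $V\setminus V_+$ are both nonempty, and after the sign flip one of them has $\weightletter$-weight at most $1/2$. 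With those two sentences added, the argument is airtight.
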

Importantly we have not analogue for the Cheeger inequality in higher dimensions; i.e. high dimensional spectral expansion does not imply high dimensional topological expansion, and implication in the other direction also does not hold.
\section{High Dimensional Expanders: The Object of Study}\label{Sec:high -dim-expanders}
\subsection{Simplicial Complexes and Links}
In order to generalize an expander graph to higher dimensions we first have to define an object that has higher dimension.
Our object of choice is a \emph{pure simplicial complex}.
These are objects that generalize graphs in two important manners:
Firstly, much like graphs only contain an edge if both vertices that are contained in it are in the graph. If a simplicial complex contains a higher dimensional edge then it contains all of the lower dimensional edges that are contained in it.
Secondly, the complex does not include a high dimensional isolated vertex, i.e.\ every edge is contained in a maximal edge.
\begin{definition}[Pure simplicial complex]
A collection of sets $\stdcomplex$ is a pure simplicial complex if it satisfies the following:
\begin{itemize}
    \item \emph{$\stdcomplex$ is a simplicial complex:} If $\stdface \in \stdcomplex$ and $\genface \subset \stdface$ then $\genface \in \stdcomplex$.
    \item \emph{$\stdcomplex$ is pure:} If $\stdface, \genface \in \stdcomplex$ are maximal sets (a set is \emph{maximal} if it is not contained in any strictly larger set of the complex) in $\stdcomplex$ then $\abs{\stdface}=\abs{\genface}$.
\end{itemize}
We call subsets in $\stdcomplex$ the \emph{faces} of $\stdcomplex$.
\end{definition}
We define the dimension of a face as:
\begin{definition}[Dimensions]
Given a face $\stdface \in \stdcomplex$ we define the dimension of $\stdface$ to be $\dim\parens{\stdface} = \abs{\stdface}-1$ and the dimension of the complex to be the dimension of the maximal face in $\stdcomplex$.
We also define the set of faces of a certain dimension in the following way:
\[
    \stdcomplex(i) = \set{\stdface \in \stdcomplex \suchthat \dim\parens{\stdface}=i}
\]
We should also stress that $\stdcomplex\parens{-1} = \set{\emptyset}$.
\end{definition}

\begin{definition}[Degree]
The degree of a simplicial complex is the maximal number of faces on a single vertex. A family of simplicial complexes with growing number of vertices, is said to have a \emph{bounded degree} if their degree is independent on the number of vertices, and remains fixed as the number of vertices in the family grows.  
\end{definition}

We note that one can think of pure $d$-dimensional complex as a $d$-uniform hypergraph with closure property.

We often refer to local neighborhoods of a simplicial complex. These are called links. Links are playing a major role in studying high dimensional expanders, as much of the study is done via the local to global paradigm, where we study complex by its links. Links are defined as follows.

\begin{definition}[Links]
Let $X$ be a $d$-dimensional pure simplicial complex.
For every $i$ and $\tau \in X(i)$, the \emph{link} of $\tau$ is the restriction of the complex to faces containing $\tau$, that is:
\[
X_\tau = \set{ \sigma \setminus \tau :~ \sigma \in X \text{ and } \tau \subseteq \sigma}.
\]
Put into words, $X_\tau$ is the complex that arises by selecting all $d$-faces that contain $\tau$, then removing $\tau$ itself. Finally, it is often convenient to refer to the set of links for all $\tau \in X(i)$, which we will refer to as the \textbf{$i$-links}.
\end{definition}

Links provide a natural method for decomposing global functions on simplicial complexes into local parts.
For instance, it is not hard to see that given a function on $k$-faces $f: X(k) \to \mathbb{R}$, its expectation over the complex is equal to the average of its expectation over links:\footnote{Formally, these expectations are defined over \textit{weighted} complexes where each level (and link) is endowed with a distribution. We cover this in the following section.}
\[
\underset{X(k)}{\mathbb{E}}[f] = \underset{\tau \in X(i)}{\mathbb{E}}\left[ \underset{\sigma \in X_\tau(k-i)}{\mathbb{E}}[f(\tau \cup \sigma)] \right].
\]

\subsection{Weighted Simplicial Complexes and Weighted Links}

When defining high dimensional expanders we would need to work with \emph{weighted} simplicial complexes.
\begin{definition}[Weighted Simplicial Complex]
A weighted pure $d$-dimensional simplicial complex $(X,\Pi)$ is a pure $d$-dimensional simplicial complex $X$ endowed with a distribution $\Pi$ on faces of the maximal dimension.
The weight of a $k$-dimensional face in the simplicial complex is then defined in the following way:
\[
\weight[\genface] = \begin{cases}
\Pi(\genface) & \genface \in k=d\\
\frac{1}{\abs{\genface}+1}\sum_{\substack{\stdface \in X(k+1)\\ \genface \subseteq \stdface}}{\weight[\stdface]} & \text{otherwise}
\end{cases}
\]
\infullversion{
Note that the weight of a $k$-dimensional face corresponds to the probability that it is chosen by the following process:\\
\begin{algorithm}[H]
\caption{Pick a face distributed according to the norm}
\DontPrintSemicolon
Draw a $d$-face $\sigma$ distributed according to $\Pi$.\;
\While{$\abs{\sigma} > k+1$}{
    Pick uniformly $v \in \sigma$.\;
    Set $\sigma = \sigma \setminus v$\;
}
\Return{$\sigma$}.\;
\end{algorithm}
Equivalently, one can pick the vertices to remove simultaneously and therefore:
\[
\forall \tau \in X(i): \weight[\tau] = \frac{1}{\binom{d}{i}}\sum_{\substack{\sigma \in X(d) \\ \tau \subset \sigma}}{\Pi(\sigma)}
\]
}{}
When $\Pi$ is not specified, it is assumed to be uniform.
\end{definition}
The \emph{weighted} links of a weighted complex are, naturally, themselves weighted complexes with distributions inherited from the global distribution $\Pi$.
\begin{definition}[Weighted Links]
Let $(X,\Pi)$ be a $d$-dimensional weighted simplicial complex. For all $0 \leq i \leq d$ and $\tau \in X(i)$, the weighted link $\parens{X_\tau,\Pi_{\tau}}$ is given by:
\begin{enumerate}
\item $X_\tau = \set{ \sigma \setminus \tau :~ \sigma \in X \text{ and } \tau \subseteq \sigma}$
\item $\weight[\tau][\sigma] = \Pr_{\sigma' \sim \Pi}{\sparens{\sigma' = \sigma \mid \tau \subseteq \sigma'}} =\frac{\weight[\sigma \cup \tau]}{\binom{\abs{\sigma}+\abs{\tau}}{\abs{\tau}}\weight[\tau]}$
\end{enumerate}
In other words, the distribution over $X_\tau$ is simply given by normalizing $\Pi$ over the top level faces of $X_\tau$. 
Finally, note that we usually drop the distribution $\Pi_{\tau}$ when clear from context.
\end{definition}

Much like in the one dimensional case, we will be interested in defining a norm on sets of faces of some dimension.
And, again, much like the graph case we will do so in the following way:
\begin{definition}[Norm]
For any weighted pure $d$-dimensional simplicial complex and every dimension $i$, define the following norm:
\[
    \forall S \subseteq X(i): \norm{S}_w = \sum_{s \in S}{w(s)}, \quad \norm{\indicator{S}}_w = \norm{S}_w 
\]
\end{definition}

\subsection{Spectral Definition of High Dimensional Expansion}
We are new ready to give definitions of a high dimensional expanders. We will give spectral and topological definitions of high dimensional expanders. The first definition we are going to consider is a spectral definition of high dimensional expanders that are called  \emph{local spectral expanders}.
These are simplicial complexes whose links are excellent expanders in the sense that their underlying graph is an expander.
Formally consider the following two definitions:
\begin{definition}[Skeleton]
    Let $X$ be a simplicial complex define the $i$ skeleton of $X$ to be the following simplicial complex:
    \[
        \skeleton{X}{i} \coloneqq \set{\stdface \in X \suchthat \dim{\stdface} \le i}
    \]

\end{definition}
Using this definition we consider the underlying graph of the links as their $1$-skeleton and arrive at the following definition that was introduced by~\cite{kaufman2016isoperimetric,EvraK16, oppenheim2018local}.
\begin{definition}[Local spectral expander~\cite{ kaufman2016isoperimetric,EvraK16, oppenheim2018local}]
    A $d$-dimensional complex $X$ is a $\lambda$-local spectral expander if for every $i \le d-2$ and $\tau \in \stdcomplex(i)$ it holds that:
    \begin{itemize}
        \item $X_\tau^{(1)}$ is connected.
        \item $\lambda_2\parens{\skeleton{\stdcomplex_\genface}{1}} \le \lambda$.
    \end{itemize}
    $X$ will be called \emph{strong}-local-spectral expander if it is a $\lambda$-local spectral expander with $\lambda < \frac{1}{d}$. Otherwise it is called \emph{weak}-local-spectral expander.
\end{definition}
Note that in this definition we only regarded some of the links.
This is because the rest of the links are either a set of unconnected vertices or a complex that contains only the empty face.

\subsection{Topological Definition of High Dimensional Expansion}
Another generalization of expander graphs to higher dimensions generalizes them in a topological sense.
Consider the Cheeger constant definition of expansion in the graph case:
The Cheeger constant is the proportion between the weight of edges that "go out" of the set and the weight of the set itself (for sufficiently small sets).
In case of higher dimension, it will be useful to think of the indicator function of a set of sets. We therefore define:
\begin{definition}[cochains]
For a weighted simplicial complex $(X,w)$ define the set of cochains of $X$ over an abelian group $G$ to be:
\[
    C^i(X;G) \coloneqq G^{X(i)}
\]
\end{definition}
For $G =\mathbbm{F}_2$, this definition coincides with an indicator function for a set of sets of size $i+1$. For the vast majority of this note the cochains will be, indeed, defined over $\mathbbm{F}_2$.
We will therefore state explicitly when we are using a different underlying group.
Moreover, the rest of the definitions in this section are out of the scope of this note for cochains that are defined over groups that are not $\mathbbm{F}_2$. Therefore we will only present the following definitions over $\mathbbm{F}_2$.

It is then natural to ask how to define, for example, a triangle leaving a set of edges (similar to an edge leaving a set of vertices).
Consider the following generalization of $\delta$:
\begin{definition}[coboundary operator]
The \emph{coboundary operator} \\
$\delta_i: \cochainset{i}{\stdcomplex;\mathbbm{F}_2} \rightarrow \cochainset{i+1}{\stdcomplex;\mathbbm{F}_2}$ is the following operator:
\begin{equation}\label{EQ:classical-coboundary}
    \delta_i F (\sigma) = \sum_{\tau \in \binom{\sigma}{i+1}}{F(\tau)}
\end{equation}
In most cases the dimension will be clear from context and therefore omitted.
\end{definition}
Therefore we say that a triangle is leaving a set of edges if an odd number of its edges is in the set.
In addition, a standard computation shows that $\delta_{i+1}\circ \delta_i=0$.
We can therefore define the spaces of \emph{$i$-coboundaries} and the space of \emph{$i$-cocycles} as
\begin{equation}\label{EQ:Bi-Zi-def}
B^i=B^i(X)=B^i(X;\mathbbm{F}_2)=\im \parens{\coboundaryoperator_{i-1}}
\qquad\text{and} 
\qquad
    Z^i=Z^i(X)=Z^i(X;\mathbbm{F}_2)=\ker \parens{\coboundaryoperator_i},
\end{equation}
respectively, where $\delta_{-2}=0$ by convention.
We have $B^i\subseteq Z^i\subseteq C^i$ because $\coboundaryoperator_i\circ \coboundaryoperator_{i-1}=0$, and the quotient space $H^i(X;\mathbbm{F}_2) = Z^i/B^i$ is the \emph{$i$-th cohomology} space.
The space dual to $H^i(X;\mathbbm{F}_2)$ is the \emph{$i$-th homology} space denoted as $H_i(X;\mathbbm{F}_2)$.
We say that $X$ is $i$-dimensional \emph{$\mathbbm{F}_2$-connected} if $H^i(X;\mathbbm{F}_2)=0$.

We would now like to move on to describe the high dimensional generalization of the Cheeger constant.
Before we do that, however, we have to thoroughly inspect the denominator of the Cheeger constant.
Note that in any graph there are sets that are guaranteed not to expand.
These sets are the empty set and the whole graph.
Therefore, in the Cheeger constant we are not looking for the absolute expansion of set but we relate the expansion of the set to how different it is from one of these trivially non-expanding sets.
In the high dimensional case we very much do the same.
Here, however, there will be more sets that are trivially non-expanding.
Specifically, these sets are the sets that are coming from a lower dimension.
Formally, a set of the form $\delta F$ where $F$ is an $(i-1)$-dimensional set is trivially non-expanding in the $i$-th dimension.
These sets are called the coboundaries of $X$ and are denoted as $B^i(X)$.
We therefore define coboundary expansion, the higher dimensional analogue of the edge expansion in the following way. The definition is originated in the work of Linial and Meshulam and the work of Gromov~\cite{LM06, Gro10}.
\begin{definition}[Coboundary Expansion,~\cite{LM06, Gro10}]
Let $(X,w)$ be a pure, $d$-dimensional \\weighted simplicial complex.
Define the following generalization of the Cheeger constant:
\[
 h^i(X,w) = \min_{F \in C^i(X) \setminus B^i(X)}{\set{\frac{\norm{\delta F}_w}{\min_{G \in B^i(X)}{\set{\norm{F+G}_w}}}}}
\]
We say that a simplicial complex is an $\varepsilon$-coboundary expander if $h^i(X,w) \ge \epsilon$ for every dimension.
\end{definition}
Note here that $h^0(X,w)$ is the Cheeger constant of the graph corresponding to the one skeleton of the complex. 
Note further that $h^i(X,w) > 0$ iff $H^i(X;\mathbbm{F}_2)=0$.

To date, no known bounded degree coboundary expanders are known.
However, in most cases a relaxation of this condition suffices, namely: cosystolic expansion.
Cosystolic expanders are the high dimensional analogues of graphs with several large connected components that are all expanders.
The high dimensional analogue very much follows suite:
The high dimensional analogue of connected component is a generalization of the fact that a connected component is a set of vertices that has no outgoing edges which are, in fact, the cocycles. 
Therefore a complex with large connected components that are all expanders has been defined by~\cite{kaufman2016isoperimetric,EvraK16} as follows. 
\begin{definition}[Cosystolic expansion~\cite{kaufman2016isoperimetric,EvraK16}]
Let $(X,w)$ be a pure, $d$-dimensional simplicial complex.
Consider the following two definitions:
\begin{enumerate}
    \item \textbf{The expansion of the connected components:}
    \[
        \tilde{h^i}(X,w) = \min_{F \in C^i(X) \setminus Z^i(X)}{\set{\frac{\norm{\delta F}_w}{\min_{G \in Z^i(X)}{\set{\norm{F+G}_w}}}}}
    \]
    \item \textbf{The minimal connected component:}
    \[
        cosyst^i(X,w) = \min_{F \in Z^i(X) \setminus B^i(X)}{\set{\norm{F}_w}}
    \]
\end{enumerate}
We say that a weighted simplicial complex is an $(\epsilon, \mu)$-cosystolic expander if for every dimension it holds that both all the connected components are expanding, i.e. $\tilde{h^i}(X,w) \ge \epsilon$ and all of the connected components are large $cosyst^i(X,w) \ge \mu$.
\end{definition}

Now that we have generalized expansion to higher dimensions let us discuss some of the properties of high dimensional expanders.
We will start with considering one of the most important properties of high dimensioanl expanders, namely, the fact that high dimensional random walks converge rapidly to their stationary distribution.

\section{Random Walks on High Dimensional Expanders}\label{sec:high-dim-RWs}
High dimensional random walks are one of the major tools in the study of high dimensional expanders.
They were introduced by the work of Kaufman and Mass \cite{KaufmanM17} and were further developed by \cite{dinur2017high, kaufman2018high, alev2020improved, kaufman2021local}.
They have various implications, however, in this note we are going to mention only few.
One of the most important properties of local spectral expanders is the fast convergence of random walks to their stationary distribution.

The proof of fast mixing of random walks on high dimensional expanders is in the spirit of the local to global method developed by Garland.
Garland \cite{garland1973p} has shown that a simplicial complex whose all links spectrally expand, has vanishing cohomology over $\mathbb{R}$.
One can think about Garland's result as studying expansion over $\R$ using local to global arguments.
Essential to Garland's method is the fact that over $\mathbb{R}$ one can use self-adjoint operators and inner products which do not exist in all spaces, for example: over $\mathbb{F}_2$.

We use a method similar in spirit Garland to prove fast mixing of high dimensional random walks.  

Unlike the graph case, in which there is one canonical random walk, in a high dimensional expander many random walks are considered.
We will be interested in the \emph{up-down walk} and the \emph{down-up walk} of every dimension.
The $k$\textsuperscript{th} dimensional up-down walk transitions between faces of dimension $k$.
Every step of the walks is comprised of two sub-steps - the up step and the down step.
If at the beginning of the step the walk is at $\stdface$ then in the up step, a $(k+1)$-dimensional face $\genface$ is chosen that contains $\stdface$ with distribution proportional to its weight.
Then in the down step, a $k$-dimensional face that is contained in $\genface$ with equal probability.
The down-up walk can similarly be defined as taking the step down first and then taking the step up.
Formally these are defined as:
\begin{definition}[Down and up walks]
    Define the up walk as $\upoperator[k]:\cochainset{k}{\stdcomplex; \R} \rightarrow \cochainset{k}{\stdcomplex; \R}$ as:
    \[
        \upoperator[k]\stdcochain(\genface) = \ev{\stdface \in \stdcomplex(k)}{\stdcochain(\stdface) \mid \stdface \subseteq \genface}
    \]
    And the down walk $\downoperator[k]:\cochainset{k}{\stdcomplex; \R} \rightarrow \cochainset{k-1}{\stdcomplex; \R}$ as:
    \[
        \downoperator[k]\stdcochain(\genface) = \ev{\stdface \in \stdcomplex(k)}{\stdcochain(\stdface) \mid \genface \subseteq \stdface}
    \]
\end{definition}
And the corresponding random walks were defined as:
\begin{definition}[Up-down random walk and down up random walk]
    Define the up-down random walk and the down-up random walk as:
    \[
    \walkoperator[k]^{+}=\downoperator[k+1]\upoperator[k]\text{\qquad and \qquad} \walkoperator[k]^{-}=\upoperator[k-1]\downoperator[k]
    \]
    respectively.
\end{definition}
As previously mentioned, the convergence of these random walks is a key property of high dimensional expanders.
Since their introduction by Kaufman and Mass in~\cite{KaufmanM17}, high dimensional random walks have proven themselves to be the backbone of many of the applications of high dimensional expander in computer science.
Examples include:
Resolution~\cite{anari2019log} of the Mihail-Vazirani conjecture~\cite{mihail1989expansion}. (which we will rigorously present in Section~\ref{sec:sampling-matroids}), deandomization of direct product testing~\cite{dinur2017high,dikstein2019agreement,KaufmanM20} and more.

We say that these random walks converge rapidly due to:
\begin{theorem}[Random walks converge rapidly on good enough local spectral expanders~\cite{kaufman2018high}]\label{thm:ko-random-walks}
    If $\stdcomplex$ is a $\gamma$-local spectral expander then:
    \[
        \lambda_2\parens{\walkoperator[k-1]^{+}}=\lambda_2\parens{\walkoperator[k]^{-}} \le 1-\frac{1}{k+1}+\frac{k}{2}\gamma
    \]
\end{theorem}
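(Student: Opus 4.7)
My plan is a Garland-style local-to-global argument. The first step is to establish the equality. Equip each $\cochainset{i}{\stdcomplex;\R}$ with the weighted inner product $\langle f, g\rangle_i = \sum_{\tau \in \stdcomplex(i)} \weight[\tau] f(\tau) g(\tau)$. A direct calculation using the recursive definition of $\weightletter$ shows that $\downoperator[k]$ is the adjoint of $\upoperator[k-1]$ under these inner products. Consequently $\walkoperator[k-1]^+ = \upoperator[k-1]^*\upoperator[k-1]$ and $\walkoperator[k]^- = \upoperator[k-1]\upoperator[k-1]^*$ have the same nonzero spectrum, and in particular the same $\lambda_2$. It also reduces the desired spectral bound to showing $\|\downoperator[k] f\|^2 \le \parens{1 - \frac{1}{k+1} + \frac{k}{2}\gamma}\|f\|^2$ for every $f \in \cochainset{k}{\stdcomplex;\R}$ orthogonal to the constant function.

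The core step is to localize this Rayleigh quotient to $(k-2)$-links. Using the recursive weight formula, both $\|f\|^2$ and $\|\downoperator[k] f\|^2$ can be rewritten as averages over $\genface \in \stdcomplex(k-2)$ (weighted by $\weight[\genface]$) of local quantities defined on the link graph $\skeleton{\stdcomplex_\genface}{1}$. Within a single link, the contribution to $\|\downoperator[k] f\|^2$ splits into a \emph{self-loop} piece, arising when the down-then-up motion returns to the same $k$-face, which yields exactly a $\frac{1}{k+1}$ fraction of the local squared norm of $f$, plus a \emph{walk} piece that is precisely $\langle A_\genface h_\genface, h_\genface\rangle$, where $A_\genface$ is the adjacency operator of $\skeleton{\stdcomplex_\genface}{1}$ and $h_\genface$ is the vertex function obtained by averaging $f$ over the link edges incident to each vertex. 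The $\gamma$-expansion of $\skeleton{\stdcomplex_\genface}{1}$ then gives $\langle A_\genface h_\genface, h_\genface\rangle \le \gamma \|h_\genface\|^2 + (1-\gamma)\langle h_\genface, \mathbbm{1}\rangle^2/\|\mathbbm{1}\|^2$.

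Assembling these local bounds and averaging over $\genface \in \stdcomplex(k-2)$ yields the theorem: the self-loop fractions aggregate to the $1 - \frac{1}{k+1}$ term, while the walk pieces, each bounded by $\gamma$ times a local norm, produce the $\frac{k}{2}\gamma$ term after collecting normalization constants. The trailing $(1-\gamma)\langle h_\genface, \mathbbm{1}\rangle^2$ contributions aggregate over all $\genface$ to a quantity supported on $(k-2)$-cochains lifted up to dimension $k$; projecting back to the orthogonal complement of $\mathbbm{1}$ in $\cochainset{k}{\stdcomplex;\R}$, either directly or through an inductive hypothesis on lower dimensional walks, absorbs this term without spoiling the bound. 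The main obstacle is precisely this bookkeeping: verifying that the coefficient of $\gamma$ collapses to exactly $\frac{k}{2}$, which requires tracking how the averaging over links converts link-level norms into global ones and controlling the ``trivial'' $(1-\gamma)$ contribution so that it does not inflate the final constant.
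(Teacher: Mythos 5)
The paper only \emph{cites} \cite{kaufman2018high} for this theorem and offers a one-paragraph heuristic (that the $k$-dimensional up-down walk can be read step-by-step through the $(k-2)$-links), so there is no full proof in the text to compare against; your Garland-style plan is the same one used in the cited work. Your two framing steps are right: $D^k$ is the adjoint of $U^{k-1}$ in the weighted inner products, so $\walkoperator[k-1]^{+}=\parens{U^{k-1}}^{*}U^{k-1}$ and $\walkoperator[k]^{-}=U^{k-1}\parens{U^{k-1}}^{*}$ share their nonzero spectrum, and the task reduces to a Rayleigh-quotient bound.

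However, you then decompose the wrong Rayleigh quotient. The clean $\frac{1}{k+1}$ self-loop contribution is a property of the \emph{up-down} walk, i.e.\ of $\norm{U^{k-1}g}^2$ for $g\in\cochainset{k-1}{\stdcomplex;\R}$: expanding $U^{k-1}g(\rho)=\frac{1}{k+1}\sum_{\sigma\subset\rho}g(\sigma)$ and squaring, the diagonal ($\sigma=\sigma'$) terms sum to exactly $\frac{1}{k+1}\norm{g}^2$, while the off-diagonal pairs localize to $\tau=\sigma\cap\sigma'\in\stdcomplex(k-2)$ and assemble to $\frac{k}{k+1}\ev{\tau}{\innerprod{A_\tau g^\tau,g^\tau}_\tau}$, with $g^\tau$ being the \emph{restriction} of $g$ to vertices of $X_\tau$ (no averaging needed). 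Your version, which splits $\norm{D^k f}^2$ for $f\in\cochainset{k}{\stdcomplex;\R}$ and introduces $h_\tau$ by averaging $f$ over incident edges, does not give a $\frac{1}{k+1}$ self-loop piece: the return probability of the down-then-up move from $\rho$ is $\frac{\weight[\rho]}{(k+1)^2}\sum_{\sigma\subset\rho}\weight[\sigma]^{-1}$, which is weight-dependent and not constant, and $\norm{D^k f}^2$ localizes simply to $\ev{\tau}{\norm{h_\tau}^2_\tau}$ with no residual adjacency term. The fix is just to use the equality you already proved and bound $\lambda_2\parens{\walkoperator[k-1]^{+}}$ directly via $\norm{U^{k-1}g}^2$. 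With that correction, the rest of your plan goes through: the link eigenvalue bound gives $\innerprod{A_\tau g^\tau,g^\tau}\le\gamma\norm{g^\tau}^2+(1-\gamma)\innerprod{g^\tau,\mathbbm{1}}^2/\norm{\mathbbm{1}}^2$, the squared means aggregate to $\norm{D^{k-1}g}^2$, and the ``absorb without spoiling the bound'' step you gesture at is genuinely an induction on $k$: one gets the recursion $\mu_k\le\frac{1}{k+1}+\frac{k}{k+1}\bigl(\gamma+(1-\gamma)\mu_{k-1}\bigr)$ with $\mu_k:=\lambda_2\parens{\walkoperator[k-1]^{+}}$ and $\mu_0=0$, and a short computation shows the coefficient of $\gamma$ stays $\le\frac{k}{2}$ at each step. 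So the approach is sound; the single concrete gap is the mis-identified walk in the localization step.
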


Note that Theorem~\ref{thm:ko-random-walks} yields non trivial results for every dimension only when $\gamma \in O\parens{\frac{1}{d^2}}$ which suffices for the many applications.
There are, however, cases in which we might be interested in convergence of random walks and will not have such strong expansion assumptions (for example when trying to sample independent set with the hardcore distribution~\cite{anari2021spectral}).
In~\cite{alev2020improved} Alev and Lau relaxed this requirement and prove the following theorem:
\begin{theorem}[Random walks converge rapidly even for weak local spectral expanders~\cite{alev2020improved}]\label{thm:al-random-walks}
    Let $\stdcomplex$ be a pure $d$-dimensional simplicial complex and let
    \[
        \gamma_i = \max\set{\lambda_2\parens{\stdcomplex_\genface^{(1)}} \suchthat \genface \in \stdcomplex(i)}
    \]
    Then:
    \[
    \lambda_2\parens{\walkoperator[k]^{-}} \le 1-\frac{\prod_{i=-1}^{k-2}{\parens{1-\gamma_i}}}{k+1}
    \]
\end{theorem}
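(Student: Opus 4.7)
The plan is to proceed by induction on $k$, using Garland's local-to-global decomposition of Dirichlet forms. The first move is to replace the down-up walk by the up-down walk one dimension lower: since $\walkoperator[k]^{-} = \upoperator[k-1]\downoperator[k]$ and $\walkoperator[k-1]^{+} = \downoperator[k]\upoperator[k-1]$ are products of the same two operators in opposite order, they share the same nonzero spectrum, so $\lambda_2\parens{\walkoperator[k]^{-}} = \lambda_2\parens{\walkoperator[k-1]^{+}}$ and it suffices to bound the latter. The base case $k=1$ reduces to $\walkoperator[0]^{+} = \downoperator[1]\upoperator[0]$, which a direct computation identifies with $\frac{1}{2}\parens{I + A_{\skeleton{\stdcomplex}{1}}}$; hence $\lambda_2\parens{\walkoperator[0]^{+}} = \frac{1 + \gamma_{-1}}{2} = 1 - \frac{1-\gamma_{-1}}{2}$, matching the claim.

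For the inductive step, I would localize via Garland's decomposition. Given a mean-zero $f \in \cochainset{k-1}{\stdcomplex;\R}$, define for each vertex $v \in \stdcomplex(0)$ its link-restriction $f_v \in \cochainset{k-2}{\stdcomplex_v;\R}$ by $f_v(\sigma) = f(\sigma \cup \set{v})$, and set $\bar f(v) = \ev{\sigma \in \stdcomplex_v(k-2)}{f_v(\sigma)}$. The weighting of Section~\ref{Sec:high -dim-expanders} gives the basic identity $\ev{v}{\ev{\sigma \in \stdcomplex_v(k-2)}{f_v(\sigma)^2}} = \ev{\sigma \in \stdcomplex(k-1)}{f(\sigma)^2}$, which extends to a decomposition of the Dirichlet form of $I - \walkoperator[k-1]^{+}$ as a convex combination of (i) an average over $v$ of the link Dirichlet forms of $I - \walkoperator[k-2][v]^{+}$ applied to the mean-zero pieces $f_v - \bar f(v)$, and (ii) the $1$-skeleton Dirichlet form of $I - A_{\skeleton{\stdcomplex}{1}}$ applied to the projected function $\bar f$.

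The final step is to combine the two available spectral bounds. The inductive hypothesis, applied inside each link $\stdcomplex_v$ (whose level-$i$ link parameter is bounded by $\gamma_{i+1}$ of $\stdcomplex$, since links of links of $\stdcomplex$ are themselves links of $\stdcomplex$), gives $\lambda_2\parens{\walkoperator[k-2][v]^{+}} \le 1 - \frac{\prod_{i=0}^{k-2}(1-\gamma_i)}{k}$ on the link piece. The $1$-skeleton piece is controlled by $1-\gamma_{-1}$ directly from the definition of $\gamma_{-1}$ as the second eigenvalue of the adjacency matrix of $\skeleton{\stdcomplex}{1}$. Assembling these bounds with the combinatorial weights produced by the decomposition yields $1 - \lambda_2\parens{\walkoperator[k-1]^{+}} \ge \frac{(1-\gamma_{-1})\prod_{i=0}^{k-2}(1-\gamma_i)}{k+1}$, which is exactly the claimed inequality.

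The main obstacle is the Garland decomposition itself: one must verify that cross-terms between the link mean-zero parts and the global $\bar f$ cancel, and that the combinatorial weights produced by the distribution $\Pi$ sit in exactly the right proportion to yield the factor $\frac{1}{k+1}$ together with a clean split between a $(1-\gamma_{-1})$ contribution from the $1$-skeleton and a $\prod_{i=0}^{k-2}(1-\gamma_i)/k$ contribution from the links. Once this identity is in hand, the inductive telescoping of $(1-\gamma_{-1})\prod_{i=0}^{k-2}(1-\gamma_i) = \prod_{i=-1}^{k-2}(1-\gamma_i)$ is automatic.
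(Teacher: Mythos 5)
The paper does not actually prove Theorem~\ref{thm:al-random-walks}; it only states the bound and cites~\cite{alev2020improved}, followed by a one-sentence remark that the key idea is to view level-$k$ walks through the random walks in links. So there is no internal proof to compare against, and your sketch must be assessed against the Alev--Lau argument and on its own merits.

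Your opening moves are right and match the standard development: $\walkoperator[k]^{-}=\upoperator[k-1]\downoperator[k]$ and $\walkoperator[k-1]^{+}=\downoperator[k]\upoperator[k-1]$ share the nonzero spectrum, and the base case $\walkoperator[0]^{+}=\tfrac{1}{2}\parens{I+A_{\skeleton{\stdcomplex}{1}}}$ gives exactly $1-\tfrac{1-\gamma_{-1}}{2}$. The index bookkeeping in the inductive step ($\gamma_i(X_v)\le\gamma_{i+1}(X)$, and the telescoping of $(1-\gamma_{-1})\prod_{i\ge 0}$) is also correct. But the crux of your proof --- the asserted exact decomposition
\[
\bigl\langle (I-\walkoperator[k-1]^{+})f,\,f\bigr\rangle
\;=\;
\alpha\,\ev{v}{\bigl\langle (I-\walkoperator[k-2][v]^{+})\,f_v^\perp,\,f_v^\perp\bigr\rangle_v}
\;+\;
\beta\,\bigl\langle (I-A)\bar f,\,\bar f\bigr\rangle
\]
with $\alpha=\tfrac{k}{k+1}$, $\beta=\tfrac{1}{k+1}$ --- is precisely what you acknowledge as ``the main obstacle,'' and it is not established. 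It is also not obviously true as written: the restricted walk on the $v$-fiber coincides with $\walkoperator[k-2][v]^{+}$ only after conditioning on not escaping the fiber (the walk removes $v$ with probability $\tfrac{1}{k+1}$ on the down step), and the escape dynamics couple the fibers in a way that depends on the full face $\sigma$, not merely on a projected vertex function $\bar f$. So any cross-terms do not cancel for free, and the quadratic form does not split cleanly into ``link Dirichlet form on $f_v^\perp$'' plus ``$1$-skeleton Dirichlet form on $\bar f$.''

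It is also worth noting that the actual Alev--Lau route differs in shape from what you sketch. Rather than ``localize at vertices and recurse inside links,'' they prove a single-level descent: using the identity $\walkoperator[j]^{+}=\tfrac{1}{j+2}I+\tfrac{j+1}{j+2}N_j$ and the fact that the non-lazy part $N_j$ decomposes over links of $(j-1)$-faces (hence is controlled by $\gamma_{j-1}$), one obtains a recursion of the form
\[
1-\lambda_2\parens{\walkoperator[j]^{+}} \;\ge\; \tfrac{j+1}{j+2}\parens{1-\gamma_{j-1}}\,\bigl(1-\lambda_2\parens{\walkoperator[j-1]^{+}}\bigr),
\]
which telescopes in $j$ within the \emph{same} complex; the factor $\tfrac{1}{k+1}$ then arises as $\prod_{j=1}^{k-1}\tfrac{j+1}{j+2}\cdot\tfrac12$. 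This ``descend one level, same complex'' recursion avoids having to prove an orthogonal decomposition of the quadratic form across $v$-fibers. Your vertex-localization scheme may well be salvageable (it is in the spirit of Madras--Randall--type chain decompositions), but it requires you to state and prove the decomposition lemma, with the correct weights and with the escape term handled explicitly; without that, the proof has a genuine gap.
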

Which is meaningful whenever all the $1$-skeleton of the links is connected.

It is important to note that both in Theorem~\ref{thm:ko-random-walks} and Theorem~\ref{thm:al-random-walks} the key observation is that the $k$-dimensional random walks can be viewed through the random walks over the links of the complex.
For example, consider a step in the up-down random walk.
This step corresponds to first picking a link to walk over (specifically the link of a $k-1$ dimensional face).
In that link the original face is a vertex.
Then the walk simply picks one of its neighbours and walks there.

Since the original submission of this note, a new result by Gotlib and Kaufman~\cite{https://doi.org/10.48550/arxiv.2208.03241} managed to improve Theorem~\ref{thm:al-random-walks} further by proving a convergence result that takes into account the structure of the initial state.
In addition, their work managed to unify this result with the trickling down Theorem (Theorem~\ref{thm:trickling-down-theorem}).

Recent works on high dimensional random walks are interested in stronger property, namely they care for optimal mixing time and have studied (see e.g. \cite{cryan2019modified}) conditions under which such optimal mixing time holds.

\section{Local to Global Spectral Expansion of High Dimensional Expanders}\label{Sec:local-to-global-spectral-expansion}
In this section we present a \emph{local to global} property of spectral high dimensional expanders.
Specifically, we will show that if all the links are connected, global expansion can be derived from local expansion. The philosophy implemented in the proof is similar in spirit to that of Garland, where one uses self adjoint operators and inner products defined over $\mathbbm{R}$ to study expansion over $\mathbbm{R}$. 

However, the local-to-global descent of spectral gaps is \emph{inherently} different than the random walks result in the following way. The fast mixing of random walks result assumes spectral expansion in all links including the link of the empty set (which is a global expansion condition on the whole complex!) to conclude fast mixing of random walks. Hence that result is not based only on local assumptions. In contrast, the following result about descent of spectral gaps from links to the entire complex only assumes spectral expansion in \emph{local} links to deduce global expansion! So the following result is a \emph{genuine} local to global result, while the random walk result also assumed some global property.

We emphasise, however, that the ability to get the descent of spectral expansion from the links to the global object requires the complex to be \emph{strong-local-spectral expander}.
By that we mean that the complex is $\gamma$-local-spectral expander with $\gamma < \frac{1}{d}$, where $d$ is the dimension of the complex.
Namely, the descent of spectral gaps that we are going to present, and is known under the name of "Trickling Down Theorem" is only possible under a strong local spectral expansion guaranty.
This is in contrast with the previously discussed fast mixing of random walks in local spectral expanders, which we have shown to hold for any local spectral expander, not necessarily strong (see, e.g.,~\cite{golowich2021improved} for an example of a local spectral expander that is not strong).
The point we are making is that strong local spectral expansion is inherently different than non-strong local spectral expansion, as both allow for fast mixing of random walks, but only the strong one exhibits trickling down effects. 
We further note that all currently known combinatorial constructions of high dimensional expanders yield only weak spectral expanders. 
Constructions of strong spectral expanders are only known via algebraic means.

\begin{theorem}[Trickling Down Theorem~\cite{oppenheim2018local}]\label{thm:trickling-down-theorem}
    Let $\stdcomplex$ be a pure $n$-dimensional simplicial complexes with connected links.
    If, for every vertex $v$ it holds that $X_v$ is a $\lambda$-local spectral expander then $X$ is a $\frac{\lambda}{1-\lambda}$-local spectral expander.
\end{theorem}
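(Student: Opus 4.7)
My plan is to reduce the conclusion to proving the single scalar inequality $\lambda_2(X^{(1)}) \le \frac{\lambda}{1-\lambda}$. For any face $\tau \in X(i)$ with $i \ge 1$ and any $v \in \tau$, the link $X_\tau$ is canonically identified with $(X_v)_{\tau \setminus \{v\}}$ together with its induced weights, so the hypothesis that $X_v$ is a $\lambda$-local spectral expander already yields $\lambda_2(X_\tau^{(1)}) \le \lambda \le \frac{\lambda}{1-\lambda}$ and the required connectedness. Hence only the spectral gap of the full $1$-skeleton of $X$ needs attention.

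Let $f \in C^0(X;\R)$ be an eigenfunction of the weighted adjacency operator $A$ of $X^{(1)}$ with eigenvalue $\mu = \lambda_2(X^{(1)})$, orthogonal to the constants in the $\weightletter_V$-inner product. I may assume $0 < \mu < 1$: for $\mu \le 0$ the bound is trivial, and connectedness of $X^{(1)}$ (built into the hypothesis of connected links) forces $\mu < 1$. The opening observation is that the eigenequation $Af = \mu f$ can be read pointwise as the statement that the link-averaged value of $f$ at every $v \in X(0)$ equals $\mu f(v)$. In particular, the centered function $g_v := f|_{X_v(0)} - \mu f(v)\cdot \mathbbm{1}$ has zero link-mean, its link-squared-norm equals the local variance $\mathrm{Var}_{X_v}(f)$, and its link-Dirichlet energy equals $\mathcal{E}_{X_v}(f)$ (Dirichlet energies are translation-invariant). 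The link spectral gap hypothesis applied to $g_v$ yields the Poincar\'e-type inequality
\[
\mathcal{E}_{X_v}(f) \;\ge\; (1-\lambda)\cdot\mathrm{Var}_{X_v}(f).
\]

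The crux is to take the $\weightletter_V$-weighted expectation of this inequality over $v$, and to observe that by the compatibility between the weighted distribution $\Pi$ on $X(d)$ and the link distributions, both sides collapse into global quantities. Tracking how a given $2$-face $\{u,u',v\}$ is weighted --- either by sampling $v$ from $\weightletter_V$ and then a link-edge from $\weightletter_v$, or by sampling a triangle from $\Pi$ and then a vertex inside --- gives $\ex_v[\mathcal{E}_{X_v}(f)] = \mathcal{E}_X(f) = (1-\mu)\|f\|^2$. The analogous marginal computation gives $\ex_v[\mathrm{Var}_{X_v}(f)] = \|f\|^2 - \mu^2\|f\|^2 = (1-\mu^2)\|f\|^2$. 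Substituting, $(1-\mu)\|f\|^2 \ge (1-\lambda)(1-\mu)(1+\mu)\|f\|^2$, and dividing by $(1-\mu)\|f\|^2 > 0$ yields $1 \ge (1-\lambda)(1+\mu)$, i.e.\ $\mu \le \frac{\lambda}{1-\lambda}$.

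The only real obstacle is the bookkeeping underlying the averaging step: verifying that the weighted $v$-averages of the local Dirichlet energies and local variances reassemble into the global objects $\mathcal{E}_X(f)$ and $(1-\mu^2)\|f\|^2$ respectively. This is where the hierarchical weighted structure of the complex does all the work, and it is precisely the reason the argument is genuinely local-to-global rather than a pointwise comparison. Once those two compatibility identities are pinned down, everything else is a three-line algebraic manipulation in $\mu$.
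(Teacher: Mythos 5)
Your proof is correct and follows essentially the same path as the paper's: you average a local Poincar\'e inequality $\mathcal{E}_{X_v}(f) \ge (1-\lambda)\mathrm{Var}_{X_v}(f)$ over vertices, while the paper does the identical averaging in Laplacian notation with the decomposition $f^v = f^{v\parallel} + f^{v\perp}$ (so $\mathrm{Var}_{X_v}(f) = \|f^{v\perp}\|^2$ and $\mathcal{E}_{X_v}(f) = \langle\mathscr{L}^+_{v,0} f^v, f^v\rangle$). The only differences are cosmetic: adjacency eigenvalue $\mu$ versus Laplacian eigenvalue $1-\mu$, and variance/Dirichlet-energy vocabulary versus parallel/perpendicular projections.
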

We will prove the theorem by looking at the Laplacian of the adjacency matrix.
We will denote the adjacency matrix of the complex by $A$ and define the following:
\begin{definition}
    Let $\lapwalkoperator[0]^{+}$ be the Laplacian of up-down walk on the complex $X$.
    I.e.: $\lapwalkoperator[0]^{+} = I - A$.
    In addition, denote Laplacian of the link of $\stdface$ by $\lapwalkoperator[\stdface][0]^{+}$.
\end{definition}
We note that $\lapwalkoperator[0]^{+}$ is self adjoint and define the following:
\begin{definition}[Restriction]
    Let $\stdface$ be a face and $\stdcochain \in \cochainset{k}{\stdcomplex ; \mathbbm{R}}$ be a cochain.
    define the restriction of $\stdcochain$ to $\stdface$ to be $\stdcochain^{\stdface} \in \cochainset{k}{\stdcomplex_\stdface; \mathbbm{R}}$\infullversion{:
    \[
        \stdcochain^{\stdface}(\genface) = \stdcochain(\genface)
    \]}{ such that: $\stdcochain^{\stdface}(\genface) = \stdcochain(\genface)$.}
\end{definition}
\begin{lemma}\label{lem:average-over-links}\label{cor:averaging-over-co-boundary}
    Let $\stdcochain, \gencochain \in \cochainset{k}{\stdcomplex ; \mathbbm{R}}$ and let $0 \le l \le n-k-1$.
    Then:
    \[
        \innerprod{\stdcochain, \gencochain} = \ex_{\genface \in \stdcomplex(l)}{\sparens{\innerprod{\stdcochain^\genface, \gencochain^\genface}}}
    \]
    In addition, if $\stdcochain, \gencochain \in \cochainset{0}{\stdcomplex ; \mathbbm{R}}$ then:
    \[
        \innerprod{\lapwalkoperator[0]^{+} \stdcochain, \gencochain} = 
        \ex_{\stdface \in \stdcomplex(l)}{\sparens{\innerprod{\lapwalkoperator[\stdface][0]^{+} \stdcochain^\stdface, \gencochain^\stdface}_{\stdface}}}
    \]
\end{lemma}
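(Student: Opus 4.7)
The plan is to prove both identities by direct manipulation of the weighted sums, relying on the single algebraic fact that unfolding the recursive definition of $\weight[\cdot]$ gives, for every $\stdface\in\stdcomplex(k)$ and every $m\ge k$,
\[
\weight[\stdface]=\frac{1}{\binom{m+1}{k+1}}\sum_{\substack{\rho\in\stdcomplex(m)\\ \rho\supseteq\stdface}}\weight[\rho],
\]
which is the weighted version of ``sample a top face and drop vertices uniformly''.

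For the first identity I would expand $\ex_{\genface\in\stdcomplex(l)}\sparens{\innerprod{\stdcochain^\genface,\gencochain^\genface}_\genface}$ as a double sum over $\genface\in\stdcomplex(l)$ and $\stdface\in\stdcomplex_\genface(k)$, substitute the link-weight formula $\weight[\genface][\stdface]=\weight[\stdface\cup\genface]/\bigl(\binom{k+l+2}{l+1}\weight[\genface]\bigr)$, and observe that the factors of $\weight[\genface]$ cancel. Re-indexing by $\rho:=\stdface\cup\genface\in\stdcomplex(k+l+1)$ converts the double sum into a sum over $\rho$, where each $\rho$ is produced by exactly $\binom{k+l+2}{l+1}$ decompositions $(\stdface,\genface)$ (one for every choice of subface $\genface\subset\rho$ of dimension $l$). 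The binomial cancels against the one in the link weight, and interchanging the order of summation and applying the displayed weight identity to $\stdface$ with $m=k+l+1$ collapses the expression to $\sum_{\stdface\in\stdcomplex(k)}\weight[\stdface]\stdcochain(\stdface)\gencochain(\stdface)=\innerprod{\stdcochain,\gencochain}$.

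For the second identity I would reduce it to the first via the Dirichlet form presentation of the graph Laplacian. Using $\lapwalkoperator[0]^{+}=I-A$ together with the identity $\sum_{u:\set{u,v}\in\stdcomplex(1)}\weight[\set{u,v}]=2\weight[v]$, a direct calculation yields
\[
\innerprod{\lapwalkoperator[0]^{+}\stdcochain,\gencochain}=\tfrac{1}{2}\sum_{\set{u,v}\in\stdcomplex(1)}\weight[\set{u,v}]\bigl(\stdcochain(u)-\stdcochain(v)\bigr)\bigl(\gencochain(u)-\gencochain(v)\bigr),
\]
and the analogous formula holds inside every link, with the link weight in place of $\weight$. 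Define the edge cochain $H\in\cochainset{1}{\stdcomplex;\mathbbm{R}}$ by $H(\set{u,v}):=(\stdcochain(u)-\stdcochain(v))(\gencochain(u)-\gencochain(v))$, which is well defined because the right hand side is symmetric in $u,v$. The Dirichlet form at the global level then equals $\tfrac12\innerprod{H,\mathbbm{1}}$, where $\mathbbm{1}$ is the constant $1$ cochain on $\stdcomplex(1)$, and at each link it equals $\tfrac12\innerprod{H^\stdface,\mathbbm{1}}_\stdface$, since $\stdcochain^\stdface,\gencochain^\stdface$ are literal restrictions of $\stdcochain,\gencochain$ to the vertex set of $\stdcomplex_\stdface$. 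Applying the first identity at dimension $k=1$ to $H$ and $\mathbbm{1}$ now yields the second identity.

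The main technical obstacle is the combinatorial bookkeeping in Part~1: matching the $\binom{k+l+2}{l+1}$ in the link-weight formula against the number of decompositions $\rho=\stdface\cup\genface$ of a face $\rho\in\stdcomplex(k+l+1)$, and closing the computation using the weight identity. Once Part~1 is in hand, Part~2 is a formal consequence --- the Dirichlet form presentation is a routine calculation and edge-gradients commute with restriction to links because the restriction is literal on vertices.
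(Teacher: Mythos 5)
The paper states this lemma but does not supply a proof, so there is no paper argument to compare against; what you have written is a self-contained proof, and it is correct.

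Your Part~1 argument is exactly the intended double-counting calculation. The key weight identity you cite,
\[
\weight[\stdface]=\frac{1}{\binom{m+1}{k+1}}\sum_{\substack{\rho\in\stdcomplex(m)\\ \rho\supseteq\stdface}}\weight[\rho]\qquad(\stdface\in\stdcomplex(k),\; m\ge k),
\]
follows by iterating the one-step recursion $\weight[\stdface]=\tfrac{1}{|\stdface|+1}\sum_{\rho\supset\stdface,\,\dim\rho=\dim\stdface+1}\weight[\rho]$ and counting the $(m-k)!$ chains between $\stdface$ and a fixed top $\rho$; note this gives $\binom{m+1}{k+1}$, not the $\binom{d}{i}$ that appears in the paper's parenthetical remark, which is a typo --- your normalization is the right one. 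From there, cancelling $\weight[\genface]$ against the link-weight formula, re-indexing by $\rho=\stdface\cup\genface$, and observing $\binom{k+l+2}{l+1}=\binom{k+l+2}{k+1}$ closes Part~1 as you describe.

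Your Part~2 reduction via the Dirichlet form is a clean and correct route. The identity
\[
\innerprod{\lapwalkoperator[0]^{+}\stdcochain,\gencochain}=\tfrac{1}{2}\sum_{\set{u,v}\in\stdcomplex(1)}\weight[\set{u,v}]\bigl(\stdcochain(u)-\stdcochain(v)\bigr)\bigl(\gencochain(u)-\gencochain(v)\bigr)
\]
does hold, using $\sum_{u:\{u,v\}\in\stdcomplex(1)}\weight[\set{u,v}]=2\weight[v]$ and symmetrization over ordered pairs, and the same derivation applies verbatim inside each link with $\weightletter_\stdface$ in place of $\weightletter$. Since restriction to a link is literal on vertices and edges, $H^\stdface(\set{u,v})=(\stdcochain^\stdface(u)-\stdcochain^\stdface(v))(\gencochain^\stdface(u)-\gencochain^\stdface(v))$, so the link Dirichlet form really is $\tfrac12\innerprod{H^\stdface,\mathbbm{1}}_\stdface$, and applying Part~1 at $k=1$ finishes the proof. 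One small point worth making explicit: this reduction needs $0\le l\le n-2$ so that Part~1 applies at $k=1$ and so that $\stdcomplex_\stdface$ has $1$-faces at all; for $l=n-1$ the link is $0$-dimensional and $\lapwalkoperator[\stdface][0]^{+}$ is not defined, so the second identity is only meaningful on that smaller range, which is the range your argument covers.
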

This Lemma is the key of the local to global argument: 
We are interested in studying the behaviour of some process over the complex.
In this example - the up-down random walk.
In order to do so we look at the process from a local point of view, i.e.\ the links of the complex.
We then use properties we already know about the links in order to derive that the entire complex satisfies the property as well.
We are interested in the smallest non-zero eigenvalue of $\lapwalkoperator[0]^{+}$.
It would therefore be useful to understand the eigenspace whose eigenvalue is exactly $0$.
We would therefore consider the following projection into the eigenspace of $0$:
\begin{definition}
    $\lapwalkoperator[\stdvertex][0]^{-}$ is the projection to the space of constant functions, formally:
    \[
        \forall \stdcochain \in \cochainset{0}{\stdcomplex;\mathbbm{R}}: \lapwalkoperator[0]^{-} \stdcochain (\stdvertex) = \innerprod{\stdcochain, \mathbbm{1}}\mathbbm{1} = \ex_{\genvertex \in \stdcomplex(0)}{\sparens{\stdcochain(\genvertex)}}
    \]
    And, as with $\lapwalkoperator[0]^{-}$ we also define local versions of this operator:
    \[
    \lapwalkoperator[\genvertex][0]^{-} \stdcochain (\stdvertex) = \innerprod{\stdcochain, \mathbbm{1}_\genvertex}_\genvertex\mathbbm{1}_\genvertex = \ex_{\genvertex \in \stdcomplex_\genvertex(0)}{\sparens{\stdcochain^\genvertex(\genvertex)}}
    \]
\end{definition}
\begin{lemma}
    For every cochain $\stdcochain \in \cochainset{0}{\stdcomplex; \mathbbm{R}}$ it holds that:
    \[
        \lapwalkoperator[0]^{+}\stdcochain (\stdvertex) = \stdcochain(\stdvertex)-\lapwalkoperator[\stdvertex][0]^{-}\stdcochain^{\stdvertex}
    \]
\end{lemma}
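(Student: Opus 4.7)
The plan is to unwind both sides directly from the definitions and observe that the average of $f$ over the link of $v$ is precisely the one-step averaging operator $Af$ of the underlying random walk.

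First I would write the left-hand side as
\[
\lapwalkoperator[0]^{+} \stdcochain(\stdvertex) = (I-A)\stdcochain(\stdvertex) = \stdcochain(\stdvertex) - (A\stdcochain)(\stdvertex),
\]
using that $\lapwalkoperator[0]^{+} = I - A$ by definition. So the whole lemma reduces to showing
\[
(A\stdcochain)(\stdvertex) = \lapwalkoperator[\stdvertex][0]^{-}\stdcochain^{\stdvertex} = \ex_{\genvertex \in \stdcomplex_\stdvertex(0)}\!\sparens{\stdcochain^{\stdvertex}(\genvertex)}.
\]

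The main step is the weight computation. By the definition of the adjacency matrix,
\[
(A\stdcochain)(\stdvertex) = \sum_{\genvertex : \set{\stdvertex,\genvertex}\in X(1)} \frac{\weight[\set{\stdvertex,\genvertex}]}{2\weight[\stdvertex]} \stdcochain(\genvertex).
\]
On the other side, the weighted-link definition gives, for a $0$-face $\set{\genvertex}$ of $X_\stdvertex$,
\[
\weight[\stdvertex][\genvertex] = \frac{\weight[\set{\stdvertex,\genvertex}]}{\binom{2}{1}\weight[\stdvertex]} = \frac{\weight[\set{\stdvertex,\genvertex}]}{2\weight[\stdvertex]},
\]
so the two expressions agree coefficient by coefficient, and substituting $\stdcochain^{\stdvertex}(\genvertex) = \stdcochain(\genvertex)$ finishes the identity.

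I do not expect any real obstacle here; the only thing to be careful with is matching the normalization conventions between the adjacency matrix (which carries a factor $\tfrac{1}{2\weight[\stdvertex]}$) and the induced distribution on the vertices of the link $X_\stdvertex$ (which produces the same factor via the binomial $\binom{|\sigma|+|\tau|}{|\tau|}$ in the weighted-link definition). Once this is verified, both sides of the identity are literally the same sum, and the lemma follows. This identity is also the natural base case of the local-to-global philosophy underlying Lemma~\ref{lem:average-over-links}: the global Laplacian at a vertex decomposes into a local piece (the averaging over the link of $\stdvertex$) plus the diagonal term $\stdcochain(\stdvertex)$, which is exactly what enables one to bound $\lapwalkoperator[0]^{+}$ in terms of the spectra of the link Laplacians in the Trickling Down Theorem.
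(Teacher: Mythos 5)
Your proof is correct and follows essentially the same path as the paper: expand $\lapwalkoperator[0]^{+} = I - A$, write out $(A\stdcochain)(\stdvertex)$ as a sum over edges, and identify the coefficient $\frac{\weight[\set{\stdvertex,\genvertex}]}{2\weight[\stdvertex]}$ with the induced link weight $\weight[\stdvertex][\genvertex]$. The only difference is that you make the binomial normalization $\binom{2}{1}=2$ explicit where the paper leaves it implicit, which is a welcome clarification rather than a departure.
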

\begin{proof}
    Notice that:
    \begin{align*}
        \lapwalkoperator[0]^{+}\stdcochain (\stdvertex) 
        & = \parens{I-A}\stdcochain(\stdvertex) 
        = \stdcochain(\stdvertex) - A\stdcochain(\stdvertex)
        = \stdcochain(\stdvertex) - \sum_{\genvertex \in \stdcomplex(0)}{\sparens{A}_{\stdvertex, \genvertex}\stdface(\genvertex)} \\
        & = \stdcochain(\stdvertex) - \sum_{\substack{\genvertex \in \stdcomplex(0)\\ \stdface\genface \in \stdcomplex(1)}}{\sparens{A}_{\stdvertex, \genvertex}\stdface(\genvertex)}
        = \stdcochain(\stdvertex) - \sum_{\genvertex \in \stdcomplex_\stdvertex(0)}{\weight[\stdvertex][\genvertex]\stdface^\stdvertex(\genvertex)}
        = \stdcochain(\stdvertex) - \lapwalkoperator[\stdvertex][0]^{-}\stdcochain^{\stdvertex}
    \end{align*}
\end{proof}
We are now ready to prove the trickling down theorem.
We will, however, present the proof to the Laplacian instead of the actual walk operator.
Theorem~\ref{thm:trickling-down-theorem} can be deduced using standard connection between the eigenvalue of an operator and its laplacian.
\begin{theorem}
    Let $X$ be a simplicial complex whose $1$-skeleton is connected and in which, for every $\stdvertex \in \stdcomplex(0)$ it holds that $\lambda_2(\lapwalkoperator[\stdvertex,0]) \ge \lambda$ then $\lambda_2(\lapwalkoperator[0]) \ge 2-\frac{1}{\lambda}$
\end{theorem}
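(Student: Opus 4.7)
The plan is to prove this via the Rayleigh quotient of $\mathscr{L}_0^+$ on an eigenfunction realizing $\lambda_2$, using Lemma~\ref{lem:average-over-links} to decompose the quotient into contributions from the vertex links and then exploiting the link expansion hypothesis. So, first I would take an eigenfunction $f \in C^0(X;\mathbbm{R})$ of $\mathscr{L}_0^+$ with eigenvalue $\mu = \lambda_2(\mathscr{L}_0^+)$, orthogonal to the constants (which is possible because the $1$-skeleton is connected, so the kernel of $\mathscr{L}_0^+$ is exactly the constants). The Dirichlet form gives $\langle \mathscr{L}_0^+ f, f \rangle = \mu \|f\|^2$, and I want to extract enough information about $\mu$ from the link-level data to force $\mu \ge 2 - \tfrac{1}{\lambda}$.

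Next I would apply the averaging identity from Lemma~\ref{cor:averaging-over-co-boundary} with $l=0$ to write
\[
\mu \|f\|^2 \;=\; \langle \mathscr{L}_0^+ f, f \rangle \;=\; \ex_{v \in X(0)}\!\sparens{\langle \mathscr{L}_{v,0}^+ f^v, f^v \rangle_v}.
\]
On each link $X_v$, the hypothesis $\lambda_2(\mathscr{L}_{v,0}^+) \ge \lambda$ together with the fact that $\mathscr{L}_{v,0}^- f^v$ is precisely the orthogonal projection of $f^v$ onto the constants (which spans the kernel of $\mathscr{L}_{v,0}^+$) lets me bound each term by a Poincaré-type inequality:
\[
\langle \mathscr{L}_{v,0}^+ f^v, f^v \rangle_v \;\ge\; \lambda \parens{\|f^v\|_v^2 - (\mathscr{L}_{v,0}^- f^v)^2}.
\]
Averaging the norm part with the same lemma gives $\ex_v[\|f^v\|_v^2] = \|f\|^2$, so the inequality reduces to $\mu \|f\|^2 \ge \lambda\parens{\|f\|^2 - \ex_v[(\mathscr{L}_{v,0}^- f^v)^2]}$.

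The crucial step is then identifying $\mathscr{L}_{v,0}^- f^v$ with $Af(v)$. Writing out both sides, $\mathscr{L}_{v,0}^- f^v = \ex_{u \in X_v(0)}[f(u)]$, and by the weight relation $w_v(u) = \tfrac{w(\set{u,v})}{2w(v)} = A_{v,u}$, this equals exactly $Af(v)$. Since $f$ is an eigenvector of $\mathscr{L}_0^+ = I - A$ with eigenvalue $\mu$, we have $Af = (1-\mu)f$, so $\ex_v[(\mathscr{L}_{v,0}^- f^v)^2] = \|Af\|^2 = (1-\mu)^2 \|f\|^2$. Plugging in and dividing by $\|f\|^2 > 0$ gives
\[
\mu \;\ge\; \lambda\parens{1 - (1-\mu)^2} \;=\; \lambda \mu (2 - \mu),
\]
and dividing by $\mu > 0$ (which holds since $f$ is not constant) yields $2 - \mu \le \tfrac{1}{\lambda}$, i.e.\ $\mu \ge 2 - \tfrac{1}{\lambda}$, as desired.

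The main obstacle I anticipate is the bookkeeping needed in the second step: verifying that the averaging lemma applies in the form required (i.e.\ that summing Rayleigh quotients of the link Laplacians with the proper link weights reconstructs the global Rayleigh quotient) and that the projection operator $\mathscr{L}_{v,0}^-$ really is orthogonal projection onto the eigenspace at $0$ in each link under the weighted inner product. Once those identifications are in hand, the rest is the algebraic manipulation above, and in particular the inequality $\mu \ge \lambda\mu(2-\mu)$ depends critically on $Af = (1-\mu)f$, which is why one must start from a genuine eigenvector rather than an arbitrary test function.
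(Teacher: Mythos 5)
Your proposal is correct and takes essentially the same route as the paper. You decompose the Rayleigh quotient of $\mathscr{L}_0^+$ over vertex links via the averaging identity, apply the link spectral gap as a Poincaré inequality, and identify the constant projection $\mathscr{L}_{v,0}^-f^v$ with $Af(v)=(1-\mu)f(v)$ to close the algebra; this is exactly the paper's argument, with the paper writing $f^v = f^{v\parallel}+f^{v\perp}$ explicitly where you carry the projector $\mathscr{L}_{v,0}^-$ directly.
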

\begin{proof}
    Let $\mu$ be a non trivial eigenvalue of $\lapwalkoperator[0]$ with the eigenfunction $\stdcochain$, i.e.:
    \[
        \lapwalkoperator[0] \stdcochain = \mu \stdcochain
    \]
    Note that due to Corollary~\ref{cor:averaging-over-co-boundary} it holds that:
    \begin{align}\label{eq:trickle-down:avg-over-restrictions}
        \mu \norm{\stdcochain}^2 & =
        \mu \innerprod{\stdcochain, \stdcochain} =
        \innerprod{\mu \stdcochain, \stdcochain} =
        \innerprod{\lapwalkoperator[0]^{+} \stdcochain, \stdcochain} =
        \ex_{\stdvertex \in \stdcomplex(0)}{\sparens{\innerprod{\lapwalkoperator[0,\stdvertex]^{+} \stdcochain^\stdvertex, \gencochain^\stdvertex}_{\stdvertex}}}
    \end{align}
    For every $\stdvertex \in \stdcomplex(0)$ let $\stdcochain^{\stdvertex \parallel} = \innerprod{\stdcochain^\stdvertex, \mathbbm{1}^v}_{\stdcomplex_{\stdvertex}}\mathbbm{1}^v$ be the projection of $\stdcochain^{\stdvertex}$ to constant on $\stdcomplex_{\stdvertex}$ and $\stdcochain^{\stdvertex \bot} = \stdcochain^\stdvertex - \stdcochain^{\stdvertex \parallel}$ and note that it is orthogonal to $\stdcochain^{\stdvertex \parallel}$.
    Note that for every $\stdvertex$ it holds that $\stdcochain^{\stdvertex\parallel}$ is constant over $\stdcomplex_{\stdvertex}$ and therefore:
    \[
        \lapwalkoperator[\stdvertex][0]^{+} \stdcochain^{\stdvertex\parallel}= (I-A)\stdcochain^{\stdvertex\parallel}=\stdcochain^{\stdvertex\parallel} - A \stdcochain^{\stdvertex\parallel} = \stdcochain^{\stdvertex\parallel} - \stdcochain^{\stdvertex\parallel} = 0
    \]
    Where $A\stdcochain^{\stdvertex\parallel}=\stdcochain^{\stdvertex\parallel}$ due to $A$ being an averaging operator and $\stdcochain^{\stdvertex\parallel}$ being constant.
    We denote by $\set{\gencochain_i}_{i \in I}$ the eigenfunction basis of $\lapwalkoperator[\stdvertex]^{+}$ when excluding the constant functions over $\stdcomplex_{\stdvertex}$ (i.e.\ the eigenfunctions whose eigenvalue is $0$).
    We then use the previous fact to conclude that:
    \begin{align*}
        \innerprod{\lapwalkoperator[\stdvertex][0]^{+}\stdcochain^\stdvertex, \stdcochain^\stdvertex}
        & = \innerprod{\lapwalkoperator[\stdvertex][0]^{+}\parens{\stdcochain^{\stdvertex\bot}+\stdcochain^{\stdvertex\parallel}}, \stdcochain^\stdvertex}
        = \innerprod{\lapwalkoperator[\stdvertex][0]^{+}\stdcochain^{\stdvertex\bot}, \stdcochain^\stdvertex}
        = \innerprod{\stdcochain^{\stdvertex\bot}, \lapwalkoperator[\stdvertex][0]^{+}\stdcochain^\stdvertex}\\
        & = \innerprod{\stdcochain^{\stdvertex\bot}, \lapwalkoperator[\stdvertex][0]^{+}\stdcochain^{\stdvertex\bot}}
        = \innerprod{\lapwalkoperator[\stdvertex][0]^{+}\stdcochain^{\stdvertex\bot}, \stdcochain^{\stdvertex\bot}}
        \innerprod{\lapwalkoperator[\stdvertex]^{+} \sum_{i \in I}{\alpha_i \gencochain_i}, \stdcochain^{\stdvertex \bot}} = \\
        & = \sum_{i \in I}{\alpha_i \innerprod{\lapwalkoperator[\stdvertex]^{+} \gencochain_i, \stdcochain^{\stdvertex \bot}}} =
        \sum_{i \in I}{\alpha_i \innerprod{\lambda_i \gencochain_i, \stdcochain^{\stdvertex \bot}}} =
        \sum_{i \in I}{\alpha_i \lambda_i \innerprod{\gencochain_i, \stdcochain^{\stdvertex \bot}}} \ge \\
        & \ge \lambda \sum_{i \in I}{\alpha_i \innerprod{\gencochain_i, \stdcochain^{\stdvertex \bot}}} =
        \lambda \innerprod{\sum_{i \in I}{\alpha_i \gencochain_i}, \stdcochain^{\stdvertex \bot}} =
        \lambda \innerprod{\stdcochain^{\stdvertex \bot}, \stdcochain^{\stdvertex \bot}} =
        \lambda \norm{\stdcochain^{\stdvertex \bot}}^2
    \end{align*}
   We combine this with (\ref{eq:trickle-down:avg-over-restrictions}) to conclude that:
    \[
        \mu \norm{\stdcochain}^2 =  
        \ex_{\stdvertex \in \stdcomplex(0)}{\sparens{\innerprod{\lapwalkoperator[0,\stdvertex]^{+} \stdcochain^\stdvertex, \gencochain^\stdvertex}_{\stdvertex}}} 
        \ge \lambda \ex_{\stdvertex \in \stdcomplex(0)}{\sparens{\norm{\stdcochain^{\stdvertex\bot}}^2}}
    \]
    We will move on to calculate $\ex_{\stdvertex \in \stdcomplex(0)}{\sparens{\norm{\stdcochain^{\stdvertex\bot}}}}$:
    Note that:
    \begin{align}\label{eq:trickle-down:calculate-perpadicular}
    \begin{split}
        \norm{\stdcochain^{\stdvertex}}^2 
        & = \innerprod{\stdcochain^{\stdvertex},\stdcochain^{\stdvertex}} 
        =\innerprod{\stdcochain^{\stdvertex \parallel} + \stdcochain^{\stdvertex \bot}, \stdcochain^{\stdvertex \parallel} + \stdcochain^{\stdvertex \bot}} \\
        & =\innerprod{\stdcochain^{\stdvertex \parallel},\stdcochain^{\stdvertex \parallel}} + \innerprod{\stdcochain^{\stdvertex \bot}, \stdcochain^{\stdvertex \bot}} 
        =\norm{\stdcochain^{\stdvertex \parallel}}^2 + \norm{\stdcochain^{\stdvertex \bot}}^2
    \end{split}
    \end{align}
    In addition, due to Lemma~\ref{lem:average-over-links}:
    \begin{equation}\label{eq:trickle-down:average-over-links}
        \norm{\stdcochain}^2 = \ex_{\stdvertex \in \stdcomplex(0)}{\sparens{\norm{\stdcochain^{\stdvertex}}^2}}
    \end{equation}
    We note that:
    \[
        \lapwalkoperator[\stdvertex]^{-}\stdcochain^\stdvertex(\genvertex) =
        \ex_{\genvertex' \in \stdcomplex(0)}{\sparens{\stdcochain^\stdvertex(\genvertex')}}\mathbbm{1}^{\stdvertex}(\genvertex) =
        \parens{\sum_{\genvertex' \in \stdcomplex(0)}{\weight[\stdvertex][\genvertex']\stdcochain^\stdvertex(\genvertex')\mathbbm{1}^{\stdvertex}(\genvertex')}}\mathbbm{1}^{\stdvertex}(\genvertex) =
        \innerprod{\stdcochain^\stdvertex, \mathbbm{1}^\stdvertex}\mathbbm{1}^\stdvertex
    \]
    And conclude that:
    \[
        \mu \stdcochain(\stdvertex) = \stdcochain(\stdvertex) - \stdcochain^{\stdvertex\parallel} \Rightarrow \stdcochain^{\stdvertex\parallel} = \parens{1-\mu}\stdcochain(\stdvertex)
    \]
    We can now calculate $\ex_{\stdvertex \in \stdcomplex(0)}{\sparens{\norm{\stdcochain^{\stdvertex \parallel}}^2}}$:
    \begin{align}\label{eq:trickle-down:calculating-parallel}
        \ex_{\stdvertex \in \stdcomplex(0)}{\sparens{\norm{\stdcochain^{\stdvertex \parallel}}^2}} =
        (1-\mu)^2 \ex_{\stdvertex \in \stdcomplex(0)}{\sparens{\norm{\stdcochain}^2}}  =
        (1-\mu)^2 \norm{\stdcochain}^2
    \end{align}
    Combining~\ref{eq:trickle-down:calculate-perpadicular}, \ref{eq:trickle-down:average-over-links} and \ref{eq:trickle-down:calculating-parallel} we get that:
    \begin{align*}
        \ex_{\stdvertex \in \stdcomplex(0)}{\sparens{\norm{\stdcochain^{\stdvertex\bot}}}} & =
        \ex_{\stdvertex \in \stdcomplex(0)}{\sparens{\norm{\stdcochain^{\stdvertex}}^2}} - \ex_{\stdvertex \in \stdcomplex(0)}{\sparens{\norm{\stdcochain^{\stdvertex \parallel}}^2}} =
        \norm{\stdcochain}^2 - (1-\mu)^2 \norm{\stdcochain}^2 = \\
        & = \mu(2-\mu)\norm{\stdcochain}^2
    \end{align*}
    Using this we conclude that:
    \[
        \mu \norm{\stdcochain}^2 \ge \lambda\mu(2-\mu)\norm{\stdcochain}^2
    \]
    And thus:
    \[
        1 \ge \lambda(2-\mu) \Rightarrow \frac{1}{\lambda} \ge 2-\mu \Rightarrow \mu \ge 2-\frac{1}{\lambda}
    \]
    As required by the Lemma.
\end{proof}
Applying the trickling theorem repeatedly yields the following local to global result:
\begin{corollary}
    Let $\stdcomplex$ be a $d$-dimensional pure simplicial complex.
    If there exists $\lambda \in \left(0,1\right]$ such that:
    \begin{itemize}
        \item For every $\genface \in \stdcomplex$ such that $\dim\parens{\genface} \le d-2$ it holds that $\stdcomplex_\genface^{(1)}$ is connected.
        \item For every $\genface \in \stdcomplex$ such that $\dim\parens{\genface} = d-2$ it holds that $\lambda_2\parens{\stdcomplex_\genface^{(1)}} \le \frac{\lambda}{1+\parens{d-1}\lambda}$.
    \end{itemize}
    Then $\stdcomplex$ is a $\lambda$-local spectral expander.
\end{corollary}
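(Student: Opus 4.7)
The plan is to obtain the global bound by iterating the Trickling Down Theorem (Theorem~\ref{thm:trickling-down-theorem}) through the link poset, peeling off one codimension at a time. Concretely, I would prove by downward induction on $i \in \{-1, 0, \ldots, d-2\}$ the following refined statement: \emph{for every $\tau \in \stdcomplex(i)$, the link $\stdcomplex_\tau$ is a $\mu_i$-local spectral expander, where $\mu_i := \tfrac{\lambda}{1+(i+1)\lambda}$.} The corollary is then the case $i=-1$, since $\stdcomplex_\emptyset = \stdcomplex$ and $\mu_{-1} = \lambda$.

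The base case $i = d-2$ is handed over by the hypothesis: for such a $\tau$, the link $\stdcomplex_\tau$ is one-dimensional, so being a $\mu_{d-2}$-local spectral expander is exactly $\lambda_2(\stdcomplex_\tau^{(1)}) \le \mu_{d-2} = \tfrac{\lambda}{1+(d-1)\lambda}$, which is the corollary's second bullet. For the inductive step, fix $\tau \in \stdcomplex(i)$ with $i < d-2$. The vertex links of $\stdcomplex_\tau$ are the complexes $\stdcomplex_{\tau \cup \{v\}}$ as $v$ ranges over $\stdcomplex_\tau(0)$; these are links of $(i+1)$-faces of $\stdcomplex$, so by the inductive hypothesis each is a $\mu_{i+1}$-local spectral expander. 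The connectivity assumptions required to invoke Trickling Down on $\stdcomplex_\tau$ are inherited from the corollary's first bullet, which covers every face of dimension at most $d-2$. Hence Trickling Down applies and yields that $\stdcomplex_\tau$ is a $\tfrac{\mu_{i+1}}{1-\mu_{i+1}}$-local spectral expander. The key algebraic cancellation is
\[
\frac{\mu_{i+1}}{1-\mu_{i+1}} \;=\; \frac{\lambda/(1+(i+2)\lambda)}{(1+(i+1)\lambda)/(1+(i+2)\lambda)} \;=\; \frac{\lambda}{1+(i+1)\lambda} \;=\; \mu_i,
\]
so the induction closes. Iterating $d-1$ times moves the denominator from $1+(d-1)\lambda$ all the way to $1$, producing the desired global $\lambda$-bound on $\stdcomplex = \stdcomplex_\emptyset$.

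The one subtlety, and the place where a casual attempt might slip, is the distinction between the \emph{pointwise} statement ``$\lambda_2(\stdcomplex_\tau^{(1)}) \le \mu_{\dim \tau}$'' and the \emph{uniform} statement ``$\stdcomplex_\tau$ is a $\mu_{\dim \tau}$-local spectral expander,'' the latter being a claim about \emph{all} sub-links of $\stdcomplex_\tau$. Trickling Down demands the uniform version as a hypothesis on the vertex links but produces only a new pointwise improvement at the next level; phrasing the induction at the uniform level, as above, is precisely what lets the improvements chain from one codimension to the next. Beyond that, the proof is the routine algebraic identity displayed above together with a straightforward check that the corollary's connectivity hypothesis is preserved by all the relevant sub-links.
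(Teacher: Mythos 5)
Your proof is correct and follows exactly the route the paper indicates (``applying the trickling theorem repeatedly''), with the downward induction on codimension, the running constant $\mu_i = \lambda/(1+(i+1)\lambda)$, and the algebraic identity $\mu_{i+1}/(1-\mu_{i+1}) = \mu_i$ spelled out. The remark about phrasing the inductive hypothesis at the uniform (``is a local spectral expander'') level rather than the pointwise (``$\lambda_2$ of the top skeleton is bounded'') level is exactly the right observation for making the chain of applications of Trickling Down close properly.
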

\section{Local to Global Topological Expansion of High Dimensional Expanders}\label{Sec:local-to-global-topological-expansion}
In the previous section we have seen local to global spectral expansion over $\mathbb{R}$.
As we have explained, that result is based on the Garland philosophy using the fact that over the reals we have inner products and self adjoint operators, which are useful in deriving the local to global theorem. 

In order to prove local to global expansion in the topological sense, we need to show a local to global statement that occurs over finite fields (in particular, over $\mathbb{F}_2$) in the case of high dimensional expanders. Thus, we have to deviate dramatically from the Garland paradigm that uses self adjoint operators and inner products as they do not exist over $\mathbb{F}_2$.

The local to global method we develop here (that is used to prove the local to global expansion over $\mathbb{F}_2$) uses the following idea: 
It shows how to derive expansion of small sets on a complex with topologically expanding links using a newly introduced notion of \emph{local minimality}.
It then shows that expansion of large sets can be inferred from expansion of small sets in the case of a complex with high enough dimensions. 

The following theorem is a central local-to-global theorem in topological high dimensional expansion.
It essentially says that global topological expansion denoted as cosystolic expansion can be obtained from local topological expansion known as coboundary expansion.

We see here again the philosophy that in high dimensional expansion we have a local to global deduction with some loss.
I.e, from local coboundary expansion we deduce a global cosystolic expansion which is a weaker topological notion of expansion.
Recall that, in the local to global spectral expansion, we have a loss in the spectral expansion the more we moved down in the trickling procedure. 

The local to global topological expansion theorem first appeared in~\cite{kaufman2016isoperimetric} albeit only for dimension $2$. 
It was then extended to any dimension in~\cite{EvraK16}, and recently was extended further by~\cite{KaufmanM21} to show that cosystolic expansion could be defined with regard to not only binary cochains, but rather to cochains that get values in any group.
The work of~\cite{KaufmanM21} shows that, even under this more generalized setting, global cosystolic expansion could be deduced from coboundary expansion in links. 

\begin{theorem}[Local-to-global cosystolic expansion~\cite{kaufman2016isoperimetric,EvraK16,KaufmanM21}] For any $d, q \in N$ and $0 < \beta < 1$,  there exist $0 < \lambda, \eta < 1$ such that the following holds: Let $X$ be a $d$-dimensional $q$-bounded degree simplicial complex satisfying the following local conditions:
\begin{itemize}
    \item Spectral expansion in links: $X$ is one sided-$\lambda$-local spectral expander.
    \item Topological expansion in links: $X$'s links are $\beta$-coboundary expanders.
\end{itemize}
Then the $(d-1)$-skeleton of $X$ is an $(\epsilon, \mu)$-global cosystolic expander, where $$\epsilon = \mbox{min}\set{\eta^{2^d-1},\frac{1}{qd^{\frac{d}{2}}}} \mbox{ and } \mu=\eta^{2^d - 1}$$
\end{theorem}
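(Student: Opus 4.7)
The plan is to follow the local-minimality paradigm initiated by Kaufman--Mass and extended in Evra--Kaufman. The central philosophy is: for any cochain $F \in C^{d-1}(X;\mathbbm{F}_2)$ we may replace $F$ by $F+\delta g$ for suitable $g$ without changing $\delta F$ or the coboundary class, so it suffices to bound $\norm{F}_w$ for a carefully chosen representative. The chosen representative will be \emph{locally minimal}: for every face $\tau$ of dimension $\le d-2$, the restriction $F_\tau$ cannot be reduced in the $X_\tau$-norm by any local coboundary $\delta h$ with $h \in C^{d-2-\dim\tau}(X_\tau;\mathbbm{F}_2)$. A standard descent argument (greedily subtract a local coboundary in any link where it strictly reduces weight; this process terminates since weights are bounded and drawn from a finite set) produces such an $F'$ with $\norm{F'}_w \le \norm{F}_w$ and $F'-F \in B^{d-1}$.

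The key local-to-global bridge, which mirrors Lemma~\ref{lem:average-over-links} from the spectral setting, is the averaging identity $\norm{F'}_w = \ex_{\tau \in X(i)}[\norm{F'_\tau}_{w_\tau}]$ and the analogous identity for $\norm{\delta F'}_w$. For locally minimal $F'$, applied in each link $X_\tau$ the hypothesis that $X_\tau$ is a $\beta$-coboundary expander forces the dichotomy: either $F'_\tau$ is itself a coboundary in $X_\tau$ (hence vanishes in $X_\tau$ by local minimality), or $\norm{\delta F'_\tau}_{w_\tau} \ge \beta \norm{F'_\tau}_{w_\tau}$. Iterating this link-by-link dichotomy across the dimensions $-1, 0, \ldots, d-2$ produces a small-weight regime: if $\norm{F'}_w \le \eta^{2^d-1}$ for a suitably chosen $\eta$ depending on $\beta$, then the cascaded averaging gives $\norm{\delta F'}_w \ge \eta^{2^d-1} \norm{F'}_w$. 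The exponent $2^d - 1$ is the unavoidable cost of propagating expansion through $d$ layers of link averaging.

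For the complementary large-weight regime, $\norm{F'}_w \ge \eta^{2^d - 1}$, I would invoke the one-sided spectral expansion hypothesis. The picture is a high-dimensional Cheeger-type inequality: when the support of $F'$ carries a non-trivial fraction of the top faces and the one-skeletons of links are $\lambda$-spectral-expanding with $\lambda$ small compared to $1/d$, the down-up random walks (Theorem~\ref{thm:ko-random-walks}) mix fast enough that the number of top faces incident to an odd number of $(d-1)$-faces in $\supp(F')$ is at least a $\frac{1}{q d^{d/2}}$ fraction of the total, yielding $\norm{\delta F'}_w \ge \frac{1}{q d^{d/2}} \norm{F'}_w$. The bounded-degree parameter $q$ enters here because it controls the ratio between uniform counts and weighted norms at the top level. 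Combining both regimes, taking the minimum of the two constants, delivers the claimed $\epsilon = \min\{\eta^{2^d-1}, \frac{1}{q d^{d/2}}\}$ for cochains $F \notin Z^{d-1}(X)$ by replacing the closest-coboundary minimum in the definition with the closest-cocycle representative.

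For the cosystole lower bound $cosyst^{d-1}(X,w) \ge \mu = \eta^{2^d - 1}$, observe that a $\mathbbm{F}_2$-cocycle $F \in Z^{d-1} \setminus B^{d-1}$ satisfies $\delta F = 0$; any locally minimal representative $F'$ of its coboundary class still has $\delta F'=0$, so the small-weight dichotomy of paragraph two forces $F'$ to be zero in every link, meaning $F' = 0$ globally, contradicting $F \notin B^{d-1}$ whenever $\norm{F}_w < \mu$. The hardest and most delicate step, in my estimation, is the cascaded application of link coboundary expansion: one must arrange the descent so that the modifications made when locally minimizing at $\tau$ do not undo the minimality previously achieved at sub-faces of $\tau$, and the averaging identity has to be tracked carefully since each link contributes multiplicatively, producing the doubly-exponential-in-$d$ loss. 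The spectral hypothesis is needed precisely to make the large-weight half of the argument go through, so the two types of expansion in the hypothesis are genuinely both necessary.
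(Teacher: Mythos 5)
Your first half—the greedy descent to a locally minimal representative, the averaging identity over links, the dichotomy (local coboundary vs.\ $\beta$-expansion in the link), and the cascaded link-by-link iteration producing the $2^d-1$ exponent—captures the right spirit of the Kaufman--Kazhdan--Lubotzky/Evra--Kaufman local-minimality paradigm for the small-set regime. However, you have the two halves of the proof and the role of the spectral hypothesis essentially backwards. In the cited works the argument factors through exactly the two theorems stated in this section: first, ``small-set coboundary expansion from expanding links,'' which uses \emph{both} the $\lambda$-local-spectral \emph{and} the $\beta$-coboundary expansion of the links (the spectral hypothesis is what controls the weight of ``fat'' faces in a locally minimal cochain so that coboundary expansion of the links can kick in on the remaining ``thin'' part); and second, ``cosystolic expansion of the $(d-1)$-skeleton from small-set coboundary expansion,'' which is where large cochains are handled and which uses only bounded degree and the passage to the $(d-1)$-skeleton—no spectral input at all. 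So the spectral hypothesis is consumed by the small-set step, not the large-set step.

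The mechanism you propose for the large-weight regime is the more serious gap: deriving a lower bound on $\norm{\delta F'}_w$ from rapid mixing of the down-up walk (Theorem~\ref{thm:ko-random-walks}) via a ``high-dimensional Cheeger-type inequality'' cannot work in principle. As this note stresses at the end of Section~\ref{Sec:expander-grpahs}, there is \emph{no} Cheeger inequality in dimension $>1$: spectral expansion does not imply topological ($\mathbbm{F}_2$-coboundary) expansion. Random-walk spectral gaps control averaging operators over $\mathbbm{R}$, whereas $\norm{\delta F'}_w$ counts $(k+1)$-faces whose boundary has odd overlap with $\supp(F')$, a parity quantity that spectral information alone does not see (Ramanujan complexes are strong local spectral expanders yet are not coboundary expanders). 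The factor $\frac{1}{qd^{d/2}}$ in the theorem is a bounded-degree counting bound arising in the skeleton argument, not a spectral quantity. Finally, a minor but telling indexing slip: you work with $F\in C^{d-1}(X)$, but cosystolic expansion of the $(d-1)$-skeleton concerns $k$-cochains for $k\le d-2$; the ``extra'' dimension of the ambient $d$-complex is precisely the leverage that lets small-set coboundary expansion of $X$ yield unrestricted cosystolic expansion of $X^{(d-1)}$, and your sketch does not exploit it.
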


As previously mentioned, an important concept that was introduced in~\cite{kaufman2016isoperimetric} and is a major tool in the analysis of the above theorem is a notion called \emph{local minimality}. 

\begin{definition}[Local minimality~\cite{kaufman2016isoperimetric}]
Given a weighted simplicial complex $(X,w)$, a $k$-cochain $f \in C^k(X)$ is (globally) minimal if 
$||f|| = \mbox{min}_{b \in B^k(X)}\{||f+b||\}$.
$f$ is called \emph{locally minimal} if $f_{\sigma}$ is minimal in $X_{\sigma}$ for every $\sigma \in X(i)$, $0 \leq  i < k$.
\end{definition}

Note that minimality implies local minimality but not vice versa.
The main idea of~\cite{kaufman2016isoperimetric} and its followup works was to use the notion of local minimality for showing that small sets topologically expand. Namely, for showing that a simplicial complex with local links that are both spectrally expanding and topologically expanding is, in fact, a \emph{small set coboundary expander}, which is defined as follows.  

\begin{definition}[Small set coboundary expander: a complex in which small sets topologically expand~\cite{kaufman2016isoperimetric}]
A $d$-dimensional weighted simplicial complex $(X,w)$ is called \emph{$(\epsilon,\mu)$-small set coboundary expander} for some constants $0 < \epsilon,\mu \leq  1$, if for every $k$-cochain $f \in C^k(X)$, $k<d$
with $||f||<\mu$ it holds that $||\delta f|| > \epsilon ||f||$.
\end{definition}

Thus, ~\cite{kaufman2016isoperimetric,EvraK16,KaufmanM21} have used the notion of local minimality to show that a complex with expanding links is a small-set coboundary expander (i.e., it topologically expands small sets). Specifically, the following was shown.

\begin{theorem} [Small set coboundary expansion from expanding links~\cite{kaufman2016isoperimetric,EvraK16}]
A $q$-bounded degree $d$ dimensional complex whose links are sufficiently strong $\lambda$-local spectral expanders and $\beta$-coboundary expanders is a $d$ dimensional $(\epsilon,\mu)$-small set coboundary expander where $(\epsilon,\mu)$ depends on $\lambda,\beta, q$.
\end{theorem}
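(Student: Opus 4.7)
The plan is to use the notion of \emph{local minimality} as a bridge from the assumed coboundary expansion of links to global small-set coboundary expansion. Given $f \in C^k(X)$ with $\norm{f}_w < \mu$, the goal is to show $\norm{\delta f}_w \geq \epsilon \norm{f}_w$. Since $\delta f = \delta(f+b)$ for every $b \in B^k(X)$, I am free to replace $f$ by any cohomologous representative. The first step is to iteratively modify $f$ by adding global coboundaries so that, for every $\sigma \in X(i)$ with $i<k$, the localization $f_\sigma \in C^{k-i-1}(X_\sigma)$ is a minimum-norm element of its coset in the link $X_\sigma$. This produces a locally minimal representative; the procedure terminates because each strict link-level improvement strictly decreases $\norm{f}_w$.

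Once $f$ is locally minimal, the $\beta$-coboundary expansion hypothesis on each link $X_\sigma$ applies tightly, since the denominator in the coboundary-expansion quotient for $f_\sigma$ equals $\norm{f_\sigma}$ itself. This yields the link-level inequality $\norm{\delta_{X_\sigma} f_\sigma}_{X_\sigma} \geq \beta \norm{f_\sigma}_{X_\sigma}$ for all the relevant $\sigma$. Averaging over, say, $\sigma \in X(0)$ and using the standard weighted identity relating $\ev{v}{\norm{f_v}}$ to $\norm{f}_w$, the right-hand side becomes a constant multiple of $\beta \norm{f}_w$.

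The main obstacle is converting the averaged quantity $\ev{v}{\norm{\delta_{X_v} f_v}}$ into a bound on $\norm{\delta f}_w$. The local and global coboundary operators are related by the identity
\[
\delta f(\sigma) = (\delta_{X_v} f_v)(\sigma \setminus \set{v}) + \sum_{u \in \sigma,\, u \neq v} f(\sigma \setminus \set{u}),
\]
so an average of link-level coboundary norms picks up a cross-term involving $f$ itself which, a priori, could cancel the main contribution. This is where strong local spectral expansion is essential: Garland-style spectral estimates on the links bound the cross-term in terms of $\lambda$, and local minimality together with $\norm{f}_w < \mu$ ensures that the localized cochains $f_v$ are supported on a negligible fraction of essentially every link, further suppressing the cross-term. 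Choosing $\mu$ and $\lambda$ small enough in terms of $\beta$, $q$, and $d$ absorbs the cross-term into the main term, yielding $\norm{\delta f}_w \geq \epsilon \norm{f}_w$ with $\epsilon$ and $\mu$ depending only on $\beta$, $\lambda$, and $q$.
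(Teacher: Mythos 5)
Your overall strategy---pass to a locally minimal representative, apply link coboundary expansion, and use spectral estimates to control the mismatch between local and global coboundaries---is exactly the method the paper attributes to \cite{kaufman2016isoperimetric,EvraK16}, and the role you assign to local minimality (making the link-level Cheeger denominator collapse to $\norm{f_\sigma}$) is the right one. The note itself does not spell out the proof, but the ``local minimality $\Rightarrow$ apply $\beta$-expansion in each link $\Rightarrow$ patch together with the spectral hypothesis'' pipeline you describe is precisely the intended argument.

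However, the displayed identity relating the global and local coboundaries is wrong in a way that would make the subsequent cross-term analysis collapse. For $\sigma \in X(k+1)$ and $v \in \sigma$, the sum you wrote as the cross-term,
\[
\sum_{u \in \sigma,\, u \neq v} f(\sigma \setminus \set{u}),
\]
is \emph{itself} equal to $(\delta_{X_v} f_v)(\sigma \setminus \set{v})$: the facets of $\sigma$ that contain $v$ are exactly the sets $\sigma \setminus \set{u}$ for $u \neq v$, and restricting $f$ to the link pushes this sum forward to the local coboundary. So the right-hand side of your identity is $2\,(\delta_{X_v} f_v)(\sigma \setminus \set{v}) = 0$ over $\mathbb{F}_2$, which cannot equal $\delta f(\sigma)$. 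The correct decomposition is
\[
\delta f(\sigma) = (\delta_{X_v} f_v)(\sigma \setminus \set{v}) + f(\sigma \setminus \set{v}),
\]
where the cross-term is the single evaluation $f(\sigma \setminus \set{v})$ coming from the one facet of $\sigma$ that avoids $v$. This single-term cross-term is what becomes (after averaging over $v$) a down-type operator applied to $f$, which is what the spectral gap and the small-support hypothesis are actually used to bound. Two further points to tighten: (1) the termination argument for the local-minimality descent is fine since each improvement strictly decreases a global weighted norm that takes finitely many values on a finite complex, but note that if you are willing to work with the ``distance to $B^k$'' formulation of small-set expansion (as you should, since otherwise the statement fails outright for nonzero small coboundaries), global minimality already implies local minimality and no iteration is needed; and (2) the claim that $f_v$ is small in ``essentially every'' link needs to be made precise via the bounded-degree parameter $q$---this is where the heavy/light link dichotomy and the $q$-dependence of $(\epsilon,\mu)$ actually enter, and it cannot be left as an intuition.
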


The next major idea of~\cite{kaufman2016isoperimetric,EvraK16,KaufmanM21} is 
that there is a way to deduce expansion of all sets (in particular, large sets) from expansion of small sets, assuming the complex is of high enough dimension. Specifically, the following was shown: 

\begin{theorem} [Cosystolic expansion from small set coboundary expansion \cite{kaufman2016isoperimetric,EvraK16}]
If $(X,w)$ is a $d$-dimensional weighted simplicial complex of bounded degree that is a $(\epsilon,\mu)$-small set coboundary expander then its $(d-1)$-skeleton is a $(\epsilon,\mu)$-cosystolic expander.
\end{theorem}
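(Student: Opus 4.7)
The plan is to verify the two defining conditions of an $(\epsilon,\mu)$-cosystolic expander for the $(d{-}1)$-skeleton separately, at each dimension $i \in \{0,\dots,d-2\}$ at which $\delta_i$ is defined inside the $(d{-}1)$-skeleton. Concretely, I will show (i) every non-trivial cosystole has weight at least $\mu$, so $cosyst^i \ge \mu$; and (ii) every non-cocycle expands at rate at least $\epsilon$ when measured against its distance to $Z^i$, so $\tilde{h^i} \ge \epsilon$. The key observation driving the whole argument is that because $X$ has dimension $d$, the small-set coboundary expansion hypothesis is available at \emph{every} dimension $k \le d-1$ --- in particular at \emph{both} dimension $i$ and dimension $i+1$, and this is precisely what the two halves of the argument need.

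For (i), fix $f \in Z^i \setminus B^i$ and suppose for contradiction that $\norm{f}_w < \mu$. Since $0 \in B^i$, necessarily $f \ne 0$, so $\norm{f}_w > 0$. Applying small-set coboundary expansion at dimension $i$ to $f$ yields $\norm{\delta f}_w > \epsilon \norm{f}_w > 0$, which contradicts $f \in Z^i$. Hence $\norm{f}_w \ge \mu$, which is exactly the bound $cosyst^i \ge \mu$.

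For (ii), fix $f \in C^i \setminus Z^i$ and let $f' = f + g^*$ where $g^* \in Z^i$ achieves the minimum $\min_{g \in Z^i} \norm{f+g}_w$; then $\norm{f'}_w$ equals that minimum and $\delta f' = \delta f \ne 0$. Split on the size of $\norm{f'}_w$. If $\norm{f'}_w < \mu$, apply small-set coboundary expansion at dimension $i$ directly to $f'$ to obtain $\norm{\delta f}_w = \norm{\delta f'}_w > \epsilon \norm{f'}_w$, which is the desired bound. If instead $\norm{f'}_w \ge \mu$, pass \emph{up} one dimension: the cochain $\delta f \in C^{i+1}$ is itself a cocycle because $\delta \circ \delta = 0$. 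Were $\norm{\delta f}_w < \mu$, then small-set expansion at dimension $i+1$ --- legal since $i+1 \le d-1 < d$ --- would give $\norm{\delta(\delta f)}_w > \epsilon \norm{\delta f}_w > 0$, contradicting $\delta^2 = 0$ and $\delta f \ne 0$. Therefore $\norm{\delta f}_w \ge \mu$. Since $\norm{f'}_w \le 1$ (weights of $i$-faces sum to $1$) and $\epsilon \le \mu$ (a mild numerical assumption that holds for the parameters produced by the preceding theorem; otherwise one simply replaces $\epsilon$ by $\min(\epsilon,\mu)$ in the conclusion), we obtain $\norm{\delta f}_w \ge \mu \ge \epsilon \ge \epsilon \norm{f'}_w$, finishing the large case.

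The conceptually critical step --- and the only place where we genuinely require the $(d{-}1)$-skeleton rather than all of $X$ --- is the ``large'' subcase of (ii): it is there that we invoke small-set coboundary expansion at dimension $i+1$, which forces $i+1 \le d-1$. If one tried to prove cosystolic expansion of $X$ itself at dimension $i = d-1$, this upward-pass step would fail because small-set expansion at dimension $d$ is simply not part of the hypothesis. I do not anticipate any deeper obstacle beyond this; the only care needed is the dimension bookkeeping that makes the upward-pass argument legal, and the mild condition $\epsilon \le \mu$ used to combine the two bounds when $\norm{f'}_w$ is large.
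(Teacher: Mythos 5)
The paper states this theorem without proof, so there is no argument in the source to compare against line by line; you should be aware that you are reconstructing an argument from the cited works rather than matching one given here. That said, your reconstruction is sound and captures exactly the mechanism the note describes informally (``deduce expansion of all sets --- in particular, large sets --- from expansion of small sets, assuming the complex is of high enough dimension''): the cosystole bound $cosyst^i \ge \mu$ follows by direct contradiction since a nonzero $i$-cocycle of weight below $\mu$ would have a nonzero coboundary; and for $\tilde{h}^i$ you correctly reduce to the minimal representative $f'$, apply small-set expansion directly when $\|f'\|_w < \mu$, and pass up one dimension when $\|f'\|_w \ge \mu$, using that $\delta f'$ is a nonzero cocycle so small-set expansion at level $i+1$ forces $\|\delta f'\|_w \ge \mu$. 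Your identification of why the $(d-1)$-skeleton is unavoidable --- the upward pass needs small-set expansion at level $i+1 \le d-1$, which is only hypothesized for $k<d$ --- is precisely the right observation and is indeed where the argument lives.

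One caveat you flag deserves emphasis: in the large case your chain of inequalities only delivers $\|\delta f\|_w / \|f'\|_w \ge \mu$, so you obtain an $(\min(\epsilon,\mu),\mu)$-cosystolic expander, not $(\epsilon,\mu)$ as literally stated, unless $\epsilon \le \mu$. You are right that this holds for the parameters produced by the preceding theorem in the paper ($\epsilon = \min\{\eta^{2^d-1}, 1/(qd^{d/2})\}$, $\mu = \eta^{2^d-1}$, so $\epsilon \le \mu$), but the theorem as written does not stipulate it, and a fully rigorous statement would either impose $\epsilon \le \mu$ or conclude with $\min(\epsilon,\mu)$. This is a gap in the statement rather than in your proof, and you handle it honestly. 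The only other minor point: you should state explicitly that the minimum over $g \in Z^i$ is achieved (it is, since $Z^i$ is finite), and that $\delta f' = \delta f$ because $g^* \in Z^i = \ker\delta$ --- both of which you clearly use implicitly.
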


Using all the above and the known existence of bounded degree complexes with expanding links the following was deduced.

\begin{corollary} [There are bounded degree cosystolic expanders of every dimension \cite{kaufman2016isoperimetric,EvraK16}]
For every $d \ge 1$, the $d$-skeleton of the $d+1$ dimensional Ramanaujan complex~\cite{lubotzky2005explicit} is a bounded degree cosystolic expander of dimension $d$. 
\end{corollary}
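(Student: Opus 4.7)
The plan is to instantiate the \emph{Local-to-global cosystolic expansion} theorem stated just above with an explicit bounded-degree complex whose links verify both of its hypotheses. The natural candidate, and the one the corollary names, is the $(d+1)$-dimensional Ramanujan complex $X$ of~\cite{lubotzky2005explicit}: a finite quotient of the affine Bruhat-Tits building of $\mathrm{PGL}_{d+2}$ over a non-archimedean local field with residue field of size $q$, which is pure of dimension $d+1$ and of degree bounded in terms of $q$ and $d$ alone. Since the theorem converts the local assumptions on a $d'$-dimensional complex into cosystolic expansion of its $(d'-1)$-skeleton, applying it with $d' = d+1$ will deliver cosystolic expansion of the $d$-skeleton of $X$, which is exactly what the corollary claims.

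The first hypothesis to verify is one-sided $\lambda$-local spectral expansion of $X$, for $\lambda$ below the threshold demanded by the theorem. Here the crucial structural fact is that in a Ramanujan complex every link $X_\tau$ of a face $\tau$ with $\dim(\tau) \le d-1$ is (a finite quotient of) the spherical building of $\mathrm{PGL}$ over the residue field $\mathbb{F}_q$, and by the Ramanujan property its underlying graph is a $\lambda$-spectral expander with $\lambda = O(1/\sqrt{q})$, where the hidden constant depends only on $d$. Thus I would simply take $q$ large enough as a function of $d$ to force $\lambda$ below the required threshold. The second hypothesis is $\beta$-coboundary expansion of every link. This is again a statement about spherical buildings of type $A$, and it is known from the building-theoretic literature (going back to Gromov's topological overlap work and developed further by Lubotzky-Meshulam-Mozes and others) that such buildings are $\beta$-coboundary expanders for a universal $\beta>0$ depending only on the rank, so all links of $X$ are uniformly $\beta$-coboundary expanding for a constant $\beta$ that is independent of the size of $X$.

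With both local hypotheses in place and the uniform degree bound, the local-to-global cosystolic expansion theorem applies and yields constants $\epsilon, \mu > 0$ (depending on $d$, $q$, $\lambda$, $\beta$) witnessing that the $d$-skeleton of $X$ is an $(\epsilon, \mu)$-cosystolic expander. The main obstacle, conceptually, is the coboundary expansion of the spherical links: this is the deep input, and I would cite it as a black box from the building literature rather than reprove it. The local spectral bound is comparatively routine from the Ramanujan property, and the bounded-degree and purity properties are immediate from the Bruhat-Tits construction, so the proof reduces almost entirely to correctly identifying the links and invoking the two black-box inputs together with the local-to-global theorem.
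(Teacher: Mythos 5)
Your proposal is correct and follows essentially the same route the paper takes: the corollary is deduced by feeding the $(d+1)$-dimensional Ramanujan complex of \cite{lubotzky2005explicit} into the local-to-global cosystolic expansion theorem (equivalently, chaining the two intermediate theorems on small-set coboundary expansion), using bounded degree, strong local spectral expansion of links (tuned by taking $q$ large), and coboundary expansion of the spherical-building links as the black-box inputs. One small imprecision worth noting: the links of \emph{non-empty} faces are (isomorphic to) spherical buildings over $\mathbb{F}_q$, and their $O(1/\sqrt{q})$ spectral gap is a property of those buildings rather than of the Ramanujan condition; the Ramanujan property is what controls the link of the empty face, namely the $1$-skeleton of the whole complex. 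The conclusion is unaffected.
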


This local to global paradigm that we have introduced was proven to be very useful recently in order to deduce cosystolic expansion in covers of high dimensional expanders~\cite{GotlibKaufman22, FirstKaufman22} as the links structure of the cover of a simplicial complex is the same as in the base complex.  

As we next discuss, the topological high dimensional expansion is tightly related  to local testability of codes.
The local to global proof of the cosystolic expansion that we have discussed here can be seen, via this prism, as a method to get global local-testability of codes from local local-testability.
This philosophy was then implemented in various subsequent works.

\section{High Dimensional Expansion and Local Testability of Codes}\label{Sec:LTCs}

One of the major motivations for studying high dimensional expanders within theoretical computer science (TCS) was the discovery of their strong relation to a central notion within TCS known as a locally testable code. Locally testable codes were extensively studied, however, their study was mostly ah hoc and there was not known a mathematical phenomenon that implies local testability of codes. 

The discovery that topological high dimensional expansion is equivalent to local testability of some particular codes suggested that local testability of codes is implied by high dimensional expansion. In order to indeed confirm this ideology, there was a need to show that (1) known locally testable codes can be explained via the high dimensional expansion prism, as well as (2) a method to get new LTCs from topological high dimensional expanders. Both of these goals were materialized using the local-to-global effect existing in high dimensional expanders, as we following discuss. However, it turned out the LTCs emerging from high dimensional expanders are not of high enough rate.

Recent works have shown how to overcome the rate issue existing in LTCs emerging from high dimensional expanders, and get high rate LTCs from product of two (one dimensional) expander graphs. These works have adjusted the local-to-global machinery developed in the realm of high dimensional expanders, to work for product of expanders, thus overcoming the rate barrier existing in using genuine high dimensional expanders to get LTCs. We, nevertheless conjecture that high rate LTCs with stronger guaranties should emerge from genuine high dimensional expanders.

We start by defining locally testable codes.

\begin{definition}[Locally Testable Code] A locally testable code (LTC) is an error correcting code admitting a randomized algorithm---called a \emph{tester}---which, given access to a word, can decide with high probability whether it is close to a codeword or not by querying just a few (i.e.\ $O(1)$) of its letters. A tester is called an $\epsilon$-tester for some $\epsilon >0$ if it accepts all codewords, and the probability of rejecting a word outside the code is $\epsilon$ proportional to its Hamming distance from the code. An LTC with an $\epsilon$-tester is called an $\epsilon$-locally testable code. 
\end{definition}

\paragraph{Topological high dimensional expansion is a form of local testability of codes.} The first discovery of the connection between high dimensional expanders and locally testable codes was made by~\cite{KaufmanLubotzky14} where the authors have shown that a coboundary expansion is, in fact, equivalent to the local testability of the coboundary code.

\begin{theorem}[Coboundary expansion is equivalent to local testability of the coboundary code~\cite{KaufmanLubotzky14}] 
A $d$-dimensional complex $X$ is $\epsilon$-coboundary expander iff the linear code $B^i(X)$, $i<d$ is $\epsilon$-locally testable by the $(i+1)$-cocycle test. This test, is given access to $f\in C^i(X)$, choose a face $\sigma \in X(i+1)$ uniformly at random and accept $f$ if $\delta_i f (\sigma) = \sum_{\tau \in \binom{\sigma}{i+1}}{f(\tau)}$.
\end{theorem}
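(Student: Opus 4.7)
The proof is, in spirit, an unfolding of definitions, and the plan is to show that both sides of the ``iff'' are the same inequality written in two different languages. First I would translate the $(i+1)$-cocycle test into a rejection probability. Given $f \in C^i(X;\mathbbm{F}_2)$, the tester samples $\sigma \in X(i+1)$ according to the weight distribution $\weight[\cdot]$ and rejects precisely when $\delta_i f(\sigma) \neq 0$. Since $\sum_{\sigma \in X(i+1)} \weight[\sigma] = 1$ and we are over $\mathbbm{F}_2$, the rejection probability satisfies
\[
\Pr[\text{reject } f] \;=\; \sum_{\sigma \in X(i+1)} \weight[\sigma]\,\indicator{\delta_i f(\sigma) \neq 0} \;=\; \norm{\delta_i f}_{\weightletter}.
\]

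Next I would verify that the tester always accepts codewords of $B^i(X)$. This is the identity $\delta_i \circ \delta_{i-1} = 0$ already recorded before \eqref{EQ:Bi-Zi-def}: if $f = \delta_{i-1} g$ for some $g \in C^{i-1}(X;\mathbbm{F}_2)$, then $\delta_i f = 0$, so the rejection probability is $0$ and the acceptance condition for a tester is met. I would then observe that the (weighted) Hamming distance of $f$ from the linear code $B^i(X)$ is, by definition,
\[
\dist\bigl(f, B^i(X)\bigr) \;=\; \min_{G \in B^i(X)} \norm{f + G}_{\weightletter},
\]
which is exactly the denominator appearing in $h^i(X,\weightletter)$.

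With these two computations, the statement ``the code is $\epsilon$-locally testable by the $(i+1)$-cocycle test'' becomes the assertion that for every $f \in C^i(X;\mathbbm{F}_2)$,
\[
\norm{\delta_i f}_{\weightletter} \;\ge\; \epsilon \cdot \min_{G \in B^i(X)} \norm{f + G}_{\weightletter}.
\]
For $f \in B^i(X)$ both sides vanish, so the inequality is automatic. For $f \in C^i(X) \setminus B^i(X)$, the denominator is strictly positive, and the inequality is equivalent to
\[
\frac{\norm{\delta_i f}_{\weightletter}}{\min_{G \in B^i(X)} \norm{f + G}_{\weightletter}} \;\ge\; \epsilon.
\]
Taking the minimum over all such $f$ on the left-hand side, this holds simultaneously for every $f$ if and only if $h^i(X,\weightletter) \ge \epsilon$, which is the definition of $\epsilon$-coboundary expansion.

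The proof has no real obstacle; the only point that deserves care is the equivalence between the two normalizations, namely that the minimum over $G \in B^i(X)$ of $\norm{f+G}_{\weightletter}$ is indeed the appropriate notion of ``Hamming distance'' in the weighted setting (so that the $\epsilon$ in local testability matches the $\epsilon$ in coboundary expansion with no fudge factor). Once this identification is made, the theorem becomes a direct translation.
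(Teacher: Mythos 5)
The paper itself does not give a proof of this theorem; it only cites~\cite{KaufmanLubotzky14}. Your proposal, however, is correct and is exactly the definitional unfolding that is also the content of the original Kaufman--Lubotzky argument: the rejection probability of the cocycle test is the weighted norm $\norm{\delta_i f}_{\weightletter}$, the (normalized) Hamming distance of $f$ to $B^i(X)$ is $\min_{G \in B^i(X)}\norm{f+G}_{\weightletter}$, and the two inequalities defining ``$\epsilon$-tester'' and ``$h^i \ge \epsilon$'' are the same inequality once you move the denominator to the other side.

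The one point worth making explicit, which you rightly flagged as the place needing care, is the sampling distribution of the tester. The theorem as recorded in the note says the tester chooses $\sigma \in X(i+1)$ \emph{uniformly at random}, whereas your computation $\Pr[\text{reject } f] = \norm{\delta_i f}_{\weightletter}$ requires $\sigma$ to be drawn according to the weight distribution $\weightletter$ on $X(i+1)$. These two distributions coincide only for very special complexes; in general the weighted sampling is the correct one for the stated ``iff'' with the weighted Cheeger constant $h^i(X,\weightletter)$ to hold without fudge factors, and that is how the source treats it. If one insists on uniform sampling, the equivalence would instead be with the unweighted Cheeger constant (i.e.\ taking $\weightletter$ uniform on $X(i+1)$). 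So you should replace ``uniformly at random'' in your first step by ``according to $\weightletter$'' to match the coboundary-expansion side exactly. With that adjustment the argument is complete; the remaining step --- taking the minimum over $f \in C^i(X)\setminus B^i(X)$ and running the argument for each $i < d$ --- is immediate.
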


\paragraph{Global local-testability from local local-testability via high dimensional expanders.}\label{ch:high-dimensional-expansion-and-testability}

The works~\cite{kaufman2016isoperimetric,EvraK16,KaufmanM21} have, in fact, shown that a global code defined over a high dimensional expander, by requiring its projection to every link to belong to a small (local) locally-testable code, is a global locally-testable code. Namely, the local testability of the global code stems from the local testability of the small codes that compose it. In these works the global locally-testable code is $H^i(X)$ where $X$ is a cosystolic expander, and the local locally-testable codes are $B^i(X_{\sigma})$ for $\sigma \in X$.

\begin{theorem} [Cosystolic expansion implies locally testable codes with linear distance~\cite{kaufman2016isoperimetric,EvraK16,KaufmanM21}] 
Let $X$ be a $d$-dimensional complex which is $(\epsilon,\mu)$-cosystolic expander then for every $i\leq d-2$, $H^i(X)$ is a $\epsilon$-locally testable code, whose normalized distance is at least $\mu$. For $i=d-1$ the code $H^i(X)$ is of distance at least $\mu$. 
\end{theorem}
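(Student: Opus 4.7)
The plan is to translate the two pieces of the $(\epsilon,\mu)$-cosystolic expander hypothesis directly into the two parts of the conclusion. First I would fix the coding-theoretic dictionary: for $i\le d-1$, identify $H^i(X)=Z^i(X)/B^i(X)$ with a binary code whose ambient space is $C^i(X)$, whose codewords are the cosets of cocycles modulo coboundaries, and where the weighted Hamming distance from a received word $f\in C^i(X)$ to the code is $d_w(f,Z^i)=\min_{z\in Z^i(X)}\norm{f-z}_w$. The associated tester is the $(i+1)$-cocycle test: sample a face $\sigma\in X(i+1)$ from the distribution induced by $w$, and reject iff $\delta_i f(\sigma)\neq 0$.

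Local testability for $i\le d-2$ is then a one-line rewriting of the definition of $\tilde{h^i}$. Since $\delta_i f$ is $\mathbbm{F}_2$-valued, the rejection probability equals $\norm{\delta_i f}_w$; and over $\mathbbm{F}_2$ one has $d_w(f,Z^i)=\min_{G\in Z^i(X)}\norm{f+G}_w$. On codewords $f\in Z^i(X)$ both quantities vanish, so the tester accepts with probability $1$. On non-codewords $f\in C^i(X)\setminus Z^i(X)$, the cosystolic hypothesis $\tilde{h^i}(X,w)\ge\epsilon$ directly gives
\[
\Pr[\text{reject}]=\norm{\delta_i f}_w\ge \epsilon\cdot\min_{G\in Z^i(X)}\norm{f+G}_w=\epsilon\cdot d_w(f,Z^i),
\]
which is the defining inequality of an $\epsilon$-tester for the code $Z^i(X)$.

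For the distance bound, which is the only content at $i=d-1$, I would use the second cosystolic hypothesis. The minimum weight of a nonzero codeword of $H^i(X)$ is $\min_{z\in Z^i(X)\setminus B^i(X)}\min_{b\in B^i(X)}\norm{z+b}_w$, and because $B^i(X)$ is a subgroup of $Z^i(X)$ the inner set $\{z+b:b\in B^i(X)\}$ is contained in $Z^i(X)\setminus B^i(X)$ whenever $z\notin B^i(X)$. Hence the double minimum collapses to $\min_{g\in Z^i(X)\setminus B^i(X)}\norm{g}_w=cosyst^i(X,w)\ge\mu$, as desired. Note that this argument invokes only $cosyst^i$ and not $\tilde{h^i}$, which is precisely why the distance statement continues to hold at the top dimension $i=d-1$ even when the tester analysis is no longer guaranteed by the cosystolic-expander hypothesis.

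The only real obstacle is notational: one must pin down how ``codeword,'' ``distance to the code,'' and ``rejection probability'' translate between the algebraic language of $H^i(X)$ and the coding-theoretic language of $Z^i(X)\subset C^i(X)$, before the two cosystolic constants can be read off as the $\epsilon$-LTC guarantee and the minimum-distance guarantee respectively. Once that dictionary is fixed, no further machinery is needed---the theorem is essentially a tautological consequence of the definition of cosystolic expansion, and crucially does \emph{not} require re-invoking the spectral expansion or coboundary expansion of links that were used upstream to produce the cosystolic expander in the first place.
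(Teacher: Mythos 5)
Your proposal is correct, and the paper itself gives no proof of this theorem (it is a cited statement), so the comparison is simply against the ``intended'' direct argument, which you have reproduced. You correctly observe that the statement is a definitional translation: the $(i+1)$-cocycle test has rejection probability exactly $\norm{\delta_i f}_w$ because $\delta_i f$ is $\mathbb{F}_2$-valued, and the bound $\tilde{h^i}(X,w)\ge\epsilon$ from the cosystolic-expander hypothesis is verbatim the $\epsilon$-tester inequality against proximity to $Z^i(X)$; meanwhile the minimum (coset-) weight over $H^i(X)$ collapses, exactly as you argue, to $cosyst^i(X,w)\ge\mu$. Your closing remark pinpoints precisely why the $i=d-1$ case retains only the distance guarantee: the distance uses $cosyst^{d-1}$ alone, while the tester needs the $\tilde{h}$-type expansion in the top dimension, which the cosystolic hypothesis is not assumed to provide there. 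Two small presentational points worth tightening: (1) you should say explicitly that the code \emph{as a subset of the ambient space} $C^i(X)$ is $Z^i(X)$, with distance measured modulo the degenerate directions $B^i(X)$ --- otherwise the first sentence of the proof wavers between ``the codewords are cosets'' and ``distance to the code is distance to $Z^i$''; (2) it is worth recording that the tester queries exactly $i+2$ coordinates of $f$ (the faces of $\sigma$), so it is $O(1)$-local for fixed $d$, which is needed before one may legitimately call this a locally testable code.
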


Using the last theorem one can get locally testable codes associated, for example, with $H^i(X)$ of an $(i+2)$-dimensional cosystolic expander $X$, $i\geq 0$; However the rate of such codes tend to be small.

Thus, the lesson is that cosystolic expanders imply locally testable codes via the local local-testability implies global local-testability paradigm, but such codes tend to be of small rate.
Nevertheless, the philosophy of global local-testability from local one is very strong and it has recent important implications to locally testable codes as we now discuss. 

\paragraph{Explaining and improving testability of known LTCs via high dimensional expanders.}
This idea of global local-testability from local ones was used recently by~\cite{kaufman2021high}, see also~\cite{dikstein2020locally}, to re-prove the local testability of single orbit affine invariant codes~\cite{DBLP:conf/stoc/KaufmanS08} via the high dimensional expansion paradigm. The new analysis has also provided tighter bounds than were previously obtained.

\paragraph{High rate LTCs.} Another important implication of the local to global testability occurs in the recent works of~\cite{dinur2021, PK21} that have constructed high rate (good) locally testable codes from twisted product of expander graphs.
Using product of expander graphs, they were able to adopt the philosophy of local to global testability to construct new LTCs of high rate. The importance of turning to product of expander graphs was to overcome the small rate barrier that existed in attempts to implement the local to global testability from two dimensional genuine high dimensional expanders.

Given the recent discovery of good LTCs constructed from two dimensional objects obtained from a product of two graphs, it is natural to ask whether good LTCs could be constructed from genuine high dimensional expanders, and what advantages such constructions might have over current ones.  

\paragraph{On $2$-LTCs from genuine high dimensional expanders} A recent work by~\cite{FirstKaufman22} suggests a framework to get good $2$-queries LTCs from genuine high dimensional expanders by introducing expanding high dimensional sheaves.
The currently known good LTCs are of very high locality.
However, for hardness of approximation, it is desirable to get 2-queries LTCs.
We conjecture that LTCs emerging from genuine high dimensional expanders should be much stronger in various respects than the ones constructed from products of one-dimensional graphs, however these seem much harder to construct.

\section{High Dimensional Expansion and Quantum LDPC Codes}\label{Sec:QLDPC-codes}

Good classical LDPC codes were known since the works of Gallager in the 60's \cite{GAL}. However, their quantum analogous seemed elusive until recently. The problem being that random LDPC classical codes can be shown to be good with high probability, while a quantum LDPC code is composed of two co-dual classical LDPC codes (see following definition). Such a pair of co-dual classical LDPC codes that are both good can not be obtained by random means, hence there was no natural source for obtaining such good codes. 

\begin{definition} [Quantum LDPC code (CSS code) \cite{CSS}]
A \emph{quantum LDPC CSS code} is a quintet
$C=(C_X,C_Z,\mathbb{F}^n,\Phi_X,\Phi_Z)$ such that the $X$-code $C_X$
and the $Z$-code $C_Z$ are subspaces of $\mathbb{F}^n$,   $\Phi_X$ is a set of vectors generating $C_X^\perp$,
$\Phi_Z$ is a set of vectors generating $C_Z^\perp$, and $C_X^\perp\subseteq C_Z$ (equivalently,
$C_Z^\perp\subseteq C_X$). The code is LDPC if each vector in  $\Phi_X,\Phi_Z$ has constant (independent of $n$) support. 
The rate of $C$ is $\dim C_X-\dim C_Z^\perp=\dim C_Z-\dim C_X^\perp$
and its distance is $\min\set{d_X,d_{Z}}$, where $d_X=\min\set{\abs{w}_{\Ham}\where w\in C_X-C_Z^\perp}$
and $d_{Z}=\min \{\|w\|_{\Ham}\where w\in C_Z-C_X^\perp\}$;
we call $d_X$ and $d_Z$ the $X$- and $Z$-distance, respectively.
The relative distance and rate of $C$ are its distance and rate divided by $n$, respectively.
\end{definition}

Up until recently most quantum LDPC codes were obtained from smooth topological objects with very simple local structure (surfaces, manifolds), see e.g. the Toric codes \cite{Toric}. This is since in such cases there is a natural way to get a pair of such codes that the dual code is isomorphic to the primal code (thus, there is a need only to design one code). The best distance achieved by such codes has exceeded slightly $\sqrt{n}$ by the notable work of Freedman et. al. \cite{FML} from about 20 years ago. Going beyond the $\sqrt{n}$ distance barrier of Freedman et. al. with any rate seemed beyond reach for many years. Very recently, \cite{EvraKZ20,KaufmanT21quantum} managed to get quantum LDPC codes improving the distance record of Freedman et. al. to $\sqrt{n} \log^k n$ for any $k$, using a \emph{novel approach based on high dimensional expanders}. Furthermore the codes of \cite{EvraKZ20} have fast decoding algorithms. 

\begin{theorem} [Decodeable quantum LDPC codes beyond the $\sqrt{n}$ distance barrier \cite{EvraKZ20}]
There exist quantum LDPC codes of distance $O(\sqrt{n} \log n)$ and rate $O(\sqrt{n}/ \log n)$ that are efficiently decodeable.
\end{theorem}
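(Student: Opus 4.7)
The plan is to realize the CSS code as the middle cohomology of a bounded-degree chain complex coming from a cosystolic expander, and to decode with a local ``flip'' algorithm justified by small-set coboundary expansion. I begin by taking a $(d+1)$-dimensional Ramanujan complex $Y$ of bounded degree and passing to its $d$-skeleton $X$; by the local-to-global cosystolic expansion theorem of Section~\ref{Sec:local-to-global-topological-expansion}, $X$ is an $(\epsilon,\mu)$-cosystolic expander with constants depending only on the degree. Since $Y$ has bounded degree, every coboundary map $\delta_i$ and every boundary map $\partial_i$ has at most constantly many non-zero entries per row and column, which is precisely what the LDPC property requires.

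Next, I fix an intermediate dimension $i$ and identify the qubits with $X(i)$, so that $n = \abs{X(i)}$. Define the CSS code by setting $C_X := \ker \delta_i$ with parity generators $\Phi_X$ equal to the columns of $\delta_{i-1}$, and $C_Z := \ker \partial_i$ with $\Phi_Z$ equal to the columns of $\partial_{i+1}$. The required containment $C_X^\perp \subseteq C_Z$ follows from the chain complex identity $\delta_i \circ \delta_{i-1} = 0$. The rate of the code equals $\dim H^i(X;\mathbbm{F}_2)$, which can be estimated from the Euler characteristic of Ramanujan complexes. For the distance, cosystolic expansion yields
\[
d_X \;=\; \min\set{\abs{w}_{\Ham} : w \in Z^i(X) \setminus B^i(X)} \;\geq\; \mu \cdot \abs{X(i)}
\]
in the normalized-weight setting, with a matching bound for $d_Z$ coming from a homological cosystolic guarantee on the dual chain complex. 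Matching the stated $\sqrt{n}\log n$ distance and $\sqrt{n}/\log n$ rate then requires choosing the dimension $i$ so that $\abs{X(i)}$ scales like the appropriate power of the vertex count and absorbing a logarithmic loss in the estimate for $\mu$.

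For efficient decoding, I would adapt a ``flip'' algorithm: given an observed syndrome, repeatedly flip a qubit whose flip strictly decreases the syndrome's Hamming weight. The convergence argument rests on the small-set coboundary expansion of Section~\ref{Sec:local-to-global-topological-expansion}: if the corrupted cochain $f$ has weight below the small-set threshold $\mu$, then by the local-minimality argument some link of $X$ witnesses a cochain that is not locally minimal, and flipping a single qubit associated with that link strictly decreases $\norm{f}_w$. Iterating gives an $O(n)$-time decoder, since each step inspects only constantly many faces to identify an admissible flip.

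The main obstacle I foresee is arranging cosystolic expansion \emph{simultaneously} for the cochain and the chain side of $X$: the local-to-global theorem I invoked gives expansion for cocycles but not automatically for the corresponding cycles in the dual. To overcome this, one must either use a complex with suitable self-duality of its chain complex or tensor/product the base expander with an auxiliary structure to symmetrize the construction; this symmetrization is the most likely source of the $\log n$ factors in the claimed bounds. A secondary obstacle is a careful analysis of the flip decoder's potential function near the boundary between ``small'' errors, where small-set expansion applies directly, and larger errors, where one must argue that the decoder still makes monotone progress without returning to previously visited configurations.
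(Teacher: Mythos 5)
Your high-level outline tracks the paper's route quite closely: take a bounded-degree Ramanujan complex, pass to a skeleton that is a cosystolic expander, read off a CSS code from the chain complex, and balance the asymmetry by producting with a one-dimensional expander. You also correctly diagnose the decisive obstacle—cosystolic expansion controls the cosystole (hence the $X$-distance through $H^1(X)$) but gives you nothing comparable on the homological side, so $d_Z$ is only polylogarithmic for these complexes. That is precisely why the paper invokes Hastings' weight-balancing product: it combines the linear $X$-distance with the logarithmic $Z$-distance to produce a quantum code whose distance is their geometric mean, yielding the $\sqrt{n}\,\mathrm{polylog}(n)$ distance and $\approx\sqrt{n}$ rate.

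Where your draft goes astray is in the intermediate step before your ``obstacle'' paragraph. There you assert that $d_Z$ admits a ``matching bound... coming from a homological cosystolic guarantee on the dual chain complex,'' and you attribute the $\sqrt{n}$ and $\log n$ factors to choosing the dimension $i$ so that $\abs{X(i)}$ scales appropriately, plus a ``logarithmic loss in the estimate for $\mu$.'' Neither of these is correct: no such homological guarantee exists for these complexes (this is the whole point of your later obstacle), and the $\sqrt{n}$ in the final parameters does not come from the choice of intermediate dimension at all—$i$ is just $1$ here, with $X$ a $2$-dimensional cosystolic expander. The square root appears only after the balancing product, as the geometric average of a linear distance and a logarithmic one; the $\log$ factors come from the logarithmic systole of the Ramanujan complex, not from a loss in $\mu$. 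Your later self-correction (``symmetrize by producting with an auxiliary structure'') is the right fix, but you should present it as the main mechanism, not as a patch for an obstacle. There is also a notational slip in the CSS setup: with $C_X=\ker\delta_i$ and $C_Z=\ker\partial_i$, the generators of $C_X^\perp$ are the columns of $\partial_{i+1}$ and those of $C_Z^\perp$ are the columns of $\delta_{i-1}$, i.e.\ your $\Phi_X$ and $\Phi_Z$ are swapped. Finally, your flip-decoder sketch is in the right spirit—the \cite{EvraKZ20} decoder does lean on small-set expansion and local minimality—but for a complete argument you would need to carry the decoding guarantee through the balancing product, not just for the base complex.
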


In fact, the work of \cite{EvraKZ20} has used the following realization:

\paragraph{Quantum LDPC codes are well as classical LTCs are born \emph{together} from a single object - a topological high dimensional expander.}  

\begin{theorem}[Cosystolic expanders imply simultaneously quantum codes with linear $X$-distance and a locally testable code with a linear distance \cite{kaufman2016isoperimetric,EvraK16,KaufmanM21}] Two dimensional cosystolic expander $X$ implies a quantum LDPC code whose $X$-code corresponds to $H^1(X)$, its $Z$-code corresponds to $H_1(X)$; thus the $X$-code has linear distance. The rate of the code is $dim H^i(X)$. Furthermore, $H^0(X)$ is a locally testable code with linear distance, whose rate is $dim H^0(X)$.
\end{theorem}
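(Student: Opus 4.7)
The plan is to build the CSS code directly from the $\mathbb{F}_2$ cochain complex of the two-dimensional simplicial complex $X$, and then to read off the rate, the $X$-distance, and the LTC claim as consequences of cosystolic expansion together with the locally-testable-code theorem already stated in Section~\ref{Sec:LTCs}. Concretely, I would identify the $n$ physical qubits with the edge set $X(1)$, take the $X$-checks to be indexed by the triangles (each $\sigma\in X(2)$ imposing $\delta_1 f(\sigma)=\sum_{\tau\in\binom{\sigma}{2}} f(\tau)=0$) and the $Z$-checks to be indexed by the vertices (each $v\in X(0)$ imposing $\partial_1 g(v)=0$). This yields $C_X=\Ker(\delta_1)=Z^1(X;\mathbb{F}_2)$ and $C_Z=\Ker(\partial_1)=Z_1(X;\mathbb{F}_2)$, both viewed as subspaces of $\mathbb{F}_2^{X(1)}$. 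Under the basis identification $\delta_{i-1}=\partial_i^{T}$ one has $C_X^\perp=B_1$ and $C_Z^\perp=B^1$, so the two CSS compatibility conditions $C_X^\perp\subseteq C_Z$ and $C_Z^\perp\subseteq C_X$ both reduce to the standard $\partial\circ\partial=0$ (equivalently $\delta\circ\delta=0$); bounded degree of $X$ forces each $X$-check to have support three and each $Z$-check to have support at most the vertex degree, so the code is LDPC.

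Having set up the code, I would then read off the rate and the $X$-distance. By definition the rate equals $\dim C_X-\dim C_Z^\perp=\dim Z^1-\dim B^1=\dim H^1(X;\mathbb{F}_2)$, as claimed. For the distance I would take the edge weight to be uniform, so that Hamming weight and the $\norm{\cdot}_w$-norm differ only by the factor $\abs{X(1)}$; then
\[
d_X=\min\set{\abs{w}_{\Ham}\where w\in C_X\setminus C_Z^\perp}=\abs{X(1)}\cdot\min\set{\norm{F}_w\where F\in Z^1\setminus B^1},
\]
and $(\epsilon,\mu)$-cosystolic expansion of $X$ bounds the right-hand minimum below by $\mu$, giving $d_X\ge\mu\cdot n$, linear in $n$.

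Finally, the claim that $H^0(X)$ is a locally testable code of rate $\dim H^0(X)$ with linear distance is a direct instance of the theorem already stated in Section~\ref{Sec:LTCs} (``Cosystolic expansion implies locally testable codes with linear distance''), specialized to $d=2$ and $i=0\le d-2$; no additional work is required. \textbf{The main obstacle} is not really a mathematical one: the entire argument is a dictionary between the chain-complex language (boundaries, cocycles, (co)homology) and the coding-theoretic language (parity-check matrices, duals, rate, distance). The only point that must be handled with care is the normalization---namely, that with uniform edge weights the cosystolic norm scales like $1/\abs{X(1)}$, so that $\mathrm{cosyst}^1\ge\mu$ really translates to \emph{linear} Hamming distance, and that CSS admissibility follows from the minimal hypothesis $\partial\circ\partial=0$ rather than from any expansion input.
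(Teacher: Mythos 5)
The paper states this theorem with citations to \cite{kaufman2016isoperimetric,EvraK16,KaufmanM21} and does not supply a proof of its own, so there is no in-text argument to compare against; I am therefore evaluating your reconstruction on its merits. Your construction is correct and is the standard one: taking physical bits to be $X(1)$, $X$-checks from triangles via $\delta_1$, and $Z$-checks from vertices via $\partial_1$ gives $C_X=Z^1$, $C_Z=Z_1$, $C_X^\perp=B_1$, $C_Z^\perp=B^1$ (using $\delta_0^T=\partial_1$ in the standard basis), so CSS admissibility is exactly $\delta\circ\delta=0$, the rate is $\dim Z^1-\dim B^1=\dim H^1(X)$, $d_X$ is the $1$-cosystole, and the LTC claim for $H^0$ is literally the $i=0$, $d=2$ instance of the theorem already stated in Section~\ref{Sec:LTCs}. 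All of this is right.

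One point you flag as the place "to be handled with care" deserves a slightly sharper statement. You say you would "take the edge weight to be uniform," but the weight function in the cosystolic expansion definition is not free to choose: it is induced from the distribution $\Pi$ on top faces, and with $\Pi$ uniform on $X(2)$ the edge weight $\weightletter(e)$ is proportional to the number of triangles containing $e$, not uniform on $X(1)$. What rescues the argument is precisely the bounded-degree hypothesis you already invoke for the LDPC property: purity gives every edge at least one incident triangle, and bounded degree caps the number above, so $\weightletter(e)$ is within a fixed multiplicative constant of $1/\abs{X(1)}$ for every $e$. Hence $\mathrm{cosyst}^1\ge\mu$ still yields $d_X\ge c\mu\cdot n$ for a constant $c>0$ depending only on the degree bound, which is all that "linear distance" requires. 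With that correction in wording your argument is complete; you may also note, as you implicitly did, that the theorem only asserts linear $X$-distance — nothing is claimed about $d_Z$, and indeed the $Z$-distance of $H_1(X)$ in Ramanujan-type examples is only logarithmic, which is exactly why the weight-balancing step in the subsequent theorem of Section~\ref{Sec:QLDPC-codes} is needed.
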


Thus, two dimensional cosystolic expander gives both a locally testable code corresponding to $H^0(X)$ and a Quantum LDPC code corresponding to $H^1(X)$ and $H_1(X)$.

Applying this wisdom to cosystolic expanders arising from Ramanujan complexes \cite{lubotzky2005explicit} implies a quantum code with linear $X$-distance, logarithmic $Z$-distance and non-zero rate. One can even use higher dimensional cosystolic expanders to get that the $X$-code has both good distance and it is a locally testable code!

The idea of \cite{EvraKZ20}, following Hastings, was to apply the following balancing procedure that balances the $X$-distance and the $Z$-distance to get a quantum code whose distance is a geometric average of the two. 

\begin{theorem} [Weight balancing procedure for quantum LDPC codes \cite{EvraKZ20}]
There exists a way to product a two-dimensional cosystolic expander $X$ with a one-dimensional expander graph to get a quantum code, whose distance is the geometric average of the distances of $H^1(X)$ and $H_1(X)$ and whose rate is roughly $\sqrt{n}$. 
\end{theorem}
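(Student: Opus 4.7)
The plan is to carry out Hastings' balancing by taking a homological (tensor) product of chain complexes. From the two-dimensional cosystolic expander $X$, I would extract the length-$3$ chain complex $C_2(X)\to C_1(X)\to C_0(X)$ over $\mathbbm{F}_2$; the CSS code it defines has $X$-code corresponding to $H^1(X)$ of linear distance but a much smaller $Z$-distance coming from $H_1(X)$. From the expander graph $G=(V,E)$, I would take the length-$2$ complex $\mathbbm{F}_2^E\to\mathbbm{F}_2^V$ with the incidence boundary. I would then form the total chain complex of the tensor product $X\otimes G$, a length-$4$ complex, and read off a CSS code from three consecutive terms, placing the qubits in the middle term. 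The LDPC property of the new code is immediate because both inputs were LDPC and the tensor product preserves sparsity up to a constant.

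For the rate, I would apply the K\"unneth formula, which splits the middle homology of $X\otimes G$ into a sum of the form $H_1(X)\otimes H_0(G)\oplus H_0(X)\otimes H_1(G)$. Choosing $|V(G)|$ to be of order $\sqrt{n}$, so that the new block length $n\cdot |V(G)|$ is of order $n^{3/2}$, the total number of logical qubits is of order $\dim H_1(X)\cdot \dim H_0(G)$, which is of order $\sqrt{n}$ relative to the new block length, matching the claimed rate.

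The core of the proof is the distance bound. Given a low-weight nonzero class in the middle homology of $X\otimes G$, I would slice it along $G$ to produce a family of chains on $X$ indexed by vertices (resp.\ edges) of $G$, and then argue a dichotomy using the spectral expansion of $G$ together with the local small-set coboundary expansion of $X$: either many slices are themselves nontrivial cycles in $H_1(X)$ and hence contribute weight at least $d(H^1(X))$ per heavy slice, or all slices are locally correctable but the expansion of $G$ then forces these local corrections to glue into a global coboundary, contradicting nontriviality. Balancing the two regimes against the number of heavy slices yields, in both directions, the advertised geometric-mean bound $\sqrt{d(H^1(X))\cdot d(H_1(X))}$ on the distance.

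The main obstacle is precisely this last step. The K\"unneth computation for the rate is essentially algebraic, but trading the spectral expansion of $G$ against the cosystolic expansion of $X$ to obtain a \emph{geometric-mean} bound, rather than the cruder minimum or product bound, is delicate: one has to quantify how small corrections on slices of $X$ can be ``mixed'' by $G$ without blowing up the overall Hamming weight, and one has to do this symmetrically in the $X$- and $Z$-directions since the tensor product swaps their roles between the two factors. This quantitative expander-mixing-inside-a-tensor-product is the heart of Hastings' balancing procedure and the technical crux of the theorem.
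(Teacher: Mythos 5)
The paper itself does not prove this theorem: Section~\ref{Sec:QLDPC-codes} is a survey discussion that states the result and points to \cite{EvraKZ20}, so there is no in-paper proof to check your proposal against. With that caveat, your outline is a faithful reconstruction of the Hastings-style balancing that \cite{EvraKZ20} actually carries out --- form a homological tensor product of the three-term chain complex of $X$ with the two-term chain complex of a graph, place the qubits in a middle degree, compute the rate from the K\"unneth formula, and prove the distance by slicing the product along the graph factor and trading the expansion of the graph against the (co)systolic expansion of $X$. You have also correctly identified the geometric-mean distance bound as the technical crux; that is where the real work in \cite{EvraKZ20} happens.

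Two things in the sketch need tightening before it becomes an argument. First, the K\"unneth bookkeeping: the total complex has four degrees, and the summands you wrote, $H_1(X)\otimes H_0(G)\oplus H_0(X)\otimes H_1(G)$, are those of degree~$1$; if instead you want the logical qubits to carry the $H_1(X)\otimes H_1(G)$ term (which is where the rate actually comes from when $G$ is a nontrivial expander and hence has a large cycle space), you must put the qubits in degree~$2$. Your subsequent rate estimate --- ``the total number of logical qubits is of order $\dim H_1(X)\cdot\dim H_0(G)$'' --- cannot give anything of order $\sqrt{n}$, since for a connected $G$ one has $\dim H_0(G)=1$; you need the $\dim H_1(X)\cdot\dim H_1(G)$ term, with $\dim H_1(G)=|E(G)|-|V(G)|+1=\Theta(|V(G)|)$ for a constant-degree expander, and then balance $|V(G)|$ against the original block length. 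Second, the distance dichotomy must be stated asymmetrically: the $X$-distance of the product is controlled by the cosystole of $X$ (your $d(H^1(X))$, which is linear by cosystolic expansion) together with expansion of $G$, while the $Z$-distance is controlled by the systole of $X$ (the small quantity $d(H_1(X))$) amplified by the length of $G$; the geometric mean appears only after you optimize the size of $G$ against the imbalance $d(H^1(X))/d(H_1(X))$. As you note at the end, getting the mean rather than a minimum or a product out of this tradeoff is exactly the content of Hastings' balancing lemma and is not automatic from a generic ``slicing plus expander-mixing'' argument.
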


Following the works \cite{EvraKZ20,KaufmanT21quantum} there were flurry of improvements starting with \cite{HHO} (see also \cite{NE21}) that have replaced the product of a two-dimensional simplicial complex with a graph with a twisted product of two graphs. The resulted object has been a two dimensional expanding cubical complex. These advances culminated in the work of \cite{PK21} that found a way to use methods inspired by the proof of local to global cosystolic expansion of \cite{kaufman2016isoperimetric} and, in particular the local minimality concept, in order to prove linear distance for these codes, thus obtaining good quantum LDPC codes. Recall that \cite{kaufman2016isoperimetric} managed to prove linear distance only to the $X$-code. By turning to cubical complexes \cite{PK21} managed to derive the linear distance both to the $X$-code and to the $Z$-code. The dimension of the code arose simply from counting degrees of freedom.

Another major question in the field of quantum codes is whether there are good quantum LTCs. We conjecture that high dimensional expansion will be a key towards progress on finding such codes. 

\section{Gromov Topological Overlapping Problem via Topological High Dimensional Expanders}\label{sec:TOP}

More than a decade ago Misha Gromov, one of the greatest mathematicians of our era, defined the notion of \emph{topological overlapping property} of a simplicial complex and posed the following question: Are there bounded degree simplicial complexes with the topological overlapping property? In the following we discuss how the topological high dimensional expansion perspective, and in particular its connection to locally testable codes, has been recently used to provide a positive answer to Gromov's question by combining the works \cite{kaufman2016isoperimetric,EvraK16,DotterrerKW16}.

We start by introducing the topological overlapping property.

\begin{definition} [Topological overlapping property \cite{Gro10}]
A $d$-dimensional complex $X$ is said to have the \emph{topological overlapping property}  if for any continuous map $F : X \rightarrow R^d$ , there exists a point $p \in  \R^d$, such that $F^{-1}(p)$ is covered by $\epsilon > 0$ fraction of the $d$-faces of $X$.
\end{definition} 

In order to study the question on the existence of bounded degree complexes with topological overlapping property Gromov has introduced the notion of coboundary expansion of simplicial complexes (that was introduced independently by Linial and Mehsulam \cite{LM06}). Gromov has shown that coboundary expansion implies the topological overlapping property; Doing that he, in fact, showed that there are unbounded degree complexes with the topological overlapping property. However, the question on the existence of bounded degree complexes with this property remained unsolved at that point. 

By the connection Gromov made between coboundary expansion (which is a notion of local testability of code as we have explained! ) to the topological overlapping question it became evident that in order to answer positively Gromov's question one should find bounded degree coboundary expanders.

The natural candidates for such bounded degree coboundary expanders were the famous bounded degree Ramanujan complexes \cite{lubotzky2005explicit}. Alas, these complexes are known not to be coboundary expanders. 
However, the local to global cosystolic result of \cite{kaufman2016isoperimetric, EvraK16} could, in fact, be applied to these complexes to show that they poses the weaker property of cosystolic expanders, thus they come close to satisfy the condition implying the topological overlapping  property.

Thus, in order to give a positive resolution to Gromov's question one had to prove that small set coboundary expansion (which implies cosystolic expansion) is, in fact, sufficient for the topological overlapping property and, this was indeed proved by \cite{DotterrerKW16}.

Thus, combining the works of \cite{kaufman2016isoperimetric,EvraK16} and \cite{DotterrerKW16} led to a positive resolution of Gromov's question on the existence of bounded degree complexes with the topological overlapping property.

Another interesting aspect of this question is the following. Since we know that cosystolic expansion is a form of local testability of codes, we, in fact, see an evident connection between locally testable codes and the topological overlapping property. This connection might be proven useful in future application within theoretical computer science.

\section{Sampling Bases of a Matroid Using Local Spectral Expanders}\label{sec:sampling-matroids}
In this section we will present Anari, Liu, Gharan and Vinzant's~\cite{anari2019log} resolution of the Mihail-Vazirani conjecture \cite{mihail1989expansion} by rigorously showing how rapid convergence of the down-up walk over local spectral expanders~\cite{kaufman2018high} were used in their solution.
Before we can present the conjecture, consider the following definition:
\begin{definition}[Matroid]
    A matroid $M=(X,\mathcal{I})$ is a combinatorial structure consisting of a ground set $X$ of elements and a non empty collection $\mathcal{I}$ of independent subsets of $X$ satisfying:
    \begin{enumerate}
        \item \textbf{Hereditary Property:} If $T \in \mathcal{I}$ and $S \subseteq T$ then $S \in \mathcal{I}$.
        \item \textbf{Exchange Axiom:} If $T_1, T_2 \in \mathcal{I}$ and $\abs{T_2} < \abs{T_1} $ then there exists $i \in T_1 \setminus T_2$ such that $T_2 \cup \set{i} \in \mathcal{I}$.
    \end{enumerate}
    We call maximal independent sets in $M$ the \emph{bases} of $M$.
\end{definition}

Matroids are a natural combinatorial objects and can be thought of as generalizations of a basis of some vector field or sub-forests of a graph.
It is natural to try to sample a basis of matroid (i.e.\ sample a basis of some vector field or a spanning tree of a graph).
One natural process to try and sample a random basis of a matroid is the using the ``base exchange random walk'' which is a random walk over the bases of a matroid, defined as follows:
\begin{definition}[The base exchange random walk]
The bases exchange walk is a walk between bases of a matroid whose step is defined by the following process:\\
\begin{algorithm}[H]
    \caption{Step in the base exchange walk}
    \DontPrintSemicolon
    Pick a member of the base is chosen uniformly at random and is then removed creating the independent set $I$.\;
    Pick a new base that contains $I$.\;
    \Return{the new basis.}
\end{algorithm}
\end{definition}
Mihail and Vazirani conjectured the following:    
\begin{conjecture}[Mihail and Vazirani Conjecture \cite{mihail1989expansion}]
The bases exchange walk converges rapidly.
\end{conjecture}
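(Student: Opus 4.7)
The plan is to realize the base exchange walk as the top-level down-up walk on a pure simplicial complex and then invoke Theorem~\ref{thm:al-random-walks}. Associate to $M = (X, \mathcal{I})$ its \emph{matroid complex} $\stdcomplex_M$, whose faces are the independent sets of $M$. The hereditary property makes $\stdcomplex_M$ a simplicial complex, while the exchange axiom forces every maximal face to have the common cardinality $r = \operatorname{rk}(M)$; hence $\stdcomplex_M$ is pure of dimension $r-1$, with the bases of $M$ as its top-dimensional faces. Endowing the top level with the uniform distribution, the base exchange walk is precisely the down-up walk $\walkoperator[r-1]^{-} = \upoperator[r-2]\downoperator[r-1]$, whose stationary distribution is uniform on bases. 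Proving rapid convergence therefore reduces to bounding the spectral gap of $\walkoperator[r-1]^{-}$.

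To invoke Theorem~\ref{thm:al-random-walks}, I would establish that $\stdcomplex_M$ is a sufficiently good local spectral expander. The key structural observation that makes the problem uniform across links is that the link $(\stdcomplex_M)_\tau$ of an independent set $\tau$ is itself a matroid complex --- namely, that of the contraction $M/\tau$. The entire problem therefore collapses to a single uniform claim: for \emph{every} matroid $M$, the $1$-skeleton $\stdcomplex_M^{(1)}$ is connected and satisfies $\lambda_2(\stdcomplex_M^{(1)}) \le 0$. This is the main obstacle, and it is the step where genuinely new matroid-theoretic input is required; the high dimensional expansion machinery developed in the preceding sections is, on its own, oblivious to this inequality.

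My strategy for the spectral bound is to pass through the theory of completely log-concave (Lorentzian) polynomials applied to the basis-generating polynomial
\[
g_M(x_1,\ldots,x_n) \;=\; \sum_{B \text{ basis of } M}\; \prod_{i \in B} x_i.
\]
The plan is to prove by induction on rank that $g_M$ is completely log-concave, with the inductive step mirroring the matroid exchange axiom: the partial derivative $\partial g_M / \partial x_i$ equals the basis-generating polynomial of the contraction $M/i$, which has strictly smaller rank. Log-concavity of $\log g_M$ at the all-ones vector forces the Hessian $\nabla^2 \log g_M(\mathbf{1})$ to have at most one positive eigenvalue, and a direct calculation identifies this Hessian, up to rescaling and a diagonal shift, with the adjacency operator of the weighted $1$-skeleton of $\stdcomplex_M$. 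The signature condition then translates exactly into $\lambda_2(\stdcomplex_M^{(1)}) \le 0$.

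With that lemma in hand the conclusion follows quickly. Every link $(\stdcomplex_M)_\tau$ is a matroid complex, so the lemma applies uniformly and yields $\gamma_i \le 0$ for every $-1 \le i \le r-3$; Theorem~\ref{thm:al-random-walks} then gives
\[
\lambda_2(\walkoperator[r-1]^{-}) \;\le\; 1 - \frac{\prod_{i=-1}^{r-3}(1-\gamma_i)}{r} \;\le\; 1 - \frac{1}{r}.
\]
Converting this spectral gap into a mixing time via standard Poincar\'e-style bounds, and using $\pi_{\min} \ge 1/\binom{n}{r}$ for the uniform measure on bases, gives a mixing time polynomial in $n$, resolving the Mihail--Vazirani conjecture.
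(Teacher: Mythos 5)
Your reduction of the conjecture to a spectral statement about the matroid independence complex is exactly the one the paper uses: both arguments identify the base exchange walk with the top-level down-up walk, both observe that links are themselves matroid (exchange-property) complexes, and both reduce everything to the single claim that the weighted $1$-skeleton of every such complex has $\lambda_2 \le 0$. (The choice of Theorem~\ref{thm:al-random-walks} versus Theorem~\ref{thm:ko-random-walks} is immaterial here; with $\gamma_i=0$ throughout they give the identical bound $1-\frac{1}{r}$.)

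Where you diverge is in how $\lambda_2 \le 0$ is established. You propose the original Anari--Liu--Gharan--Vinzant route via complete log-concavity of the basis generating polynomial $g_M$, reading off the signature of the adjacency operator from the Hessian of $\log g_M$ at $\mathbf{1}$. The paper instead gives a purely combinatorial proof: Lemma~\ref{lem:exchange-division-lemma} shows that any graph satisfying the exchange axiom can be iteratively partitioned into independent parts with all cross-edges present, so by Lemma~\ref{lem:expansion-of-d-2-links} the $1$-skeleton of every link of codimension two is a complete multipartite graph, hence (by Cauchy interlacing applied to the graph and its complement) has $\lambda_2 \le 0$; connectivity of all links is supplied by Lemma~\ref{lem:connectivity-of-exhnage-graphs}, and the Trickling Down Theorem then propagates $0$-spectral-expansion up to the whole complex. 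Your approach buys generality --- the log-concavity machinery extends to weighted bases and beyond matroids --- but imports substantial external theory (Lorentzian polynomials, the Hessian-signature calculation) and the inductive verification of complete log-concavity is only sketched. The paper's approach is more elementary and self-contained: everything needed is a short graph-theoretic lemma plus the Trickling Down Theorem already developed in Section~\ref{Sec:local-to-global-spectral-expansion}, and the identification of the links as complete multipartite graphs gives an unusually transparent reason \emph{why} the eigenvalue bound holds. If you want to keep the log-concave route, you should at least note explicitly that $\lambda_2\le 0$ already forces connectivity (otherwise $\lambda_2=1$), so no separate connectivity argument is needed, and you should supply the induction establishing complete log-concavity rather than asserting it.
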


We note that, due to the hereditary property, one can think of a simplicial complex that is comprised of the independent sets of a matroid.
Moreover, it is easy to see that the matroid exchange walk is the $d$-dimensional down-up walk!
We will therefore be interested in the expansion properties of that simplicial complex - as, if that complex is indeed a local spectral expander, then the base exchange walk converges rapidly and we could sample a basis of the matroid by picking some constant basis to start from (which corresponds to a face of maximal dimension) and then perform a few steps in the down-up walk.
Then we can conclude that we have arrived at a random basis due to the convergence of that walk.
We will show that matroids are the best possible local spectral expanders, $0$-local spectral expanders.
Note that the simplicial complex that is constructed by the independent sets of the matroid satisfies the following property:
\begin{definition}[Exchange property]
    We say that a simplicial complex satisfy the exchange property if the following holds:
    For every two faces $\stdface, \genface \in \stdcomplex$ such that $\abs{\stdface} < \abs{\genface}$ there exists $\stdvertex \in \genface \setminus \stdface$ such that $\stdface \cup \set{\stdvertex} \in \stdcomplex$.
\end{definition}
It is easy to see that if a simplicial complex satisfies the exchange property then so are all its links and skeletons.
We will therefore try to use the trickling down theorem in order to prove that matroids are indeed local spectral expanders.
We will start with proving that all the links of co-dimension $2$ are $0$-local-spectral-expanders:
\begin{lemma}\label{lem:exchange-division-lemma}
If $G$ satisfies the exchange property and $E \ne \emptyset$ then there exists a partition of $V$ into $V_1, V_2, V_3$ such that:
\begin{itemize}
    \item $V_1$ and $V_2$ are not empty \infullversion{.
    \item $V_1$ and $V_2$ are independent.
    }{and independent.}
    \item For all $i \ne j$: For every $u \in V_i$ and $v \in V_j$ it holds that $\set{u,v} \in E$.
\end{itemize}
\end{lemma}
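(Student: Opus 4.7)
The plan is to introduce the relation $u \sim v$ defined by $u = v$ or $\set{u,v} \notin E$, prove that the exchange property forces $\sim$ to be an equivalence relation, and then read off the required partition from its equivalence classes. The classes will form a complete multipartite decomposition of $G$ (each class independent, all edges between distinct classes present), and we simply merge the surplus classes into a third block $V_3$.

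First I would unpack the exchange property at low dimension. View $G$ as a $1$-dimensional simplicial complex, and apply the exchange axiom to $\sigma = \set{v}$ and $\tau = \set{u,w} \in E$ with $v \notin \set{u,w}$: it produces some $x \in \set{u,w}$ with $\set{v,x} \in E$. Contrapositively, if $v, u, w$ are three distinct vertices and both $\set{v,u}$ and $\set{v,w}$ fail to be edges, then $\set{u,w}$ is also a non-edge.

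Next I would verify that $\sim$ is an equivalence relation. Reflexivity and symmetry are immediate from the definition. For transitivity, suppose $u \sim v$ and $v \sim w$; the cases in which two of the three vertices coincide are handled directly, and otherwise all three are distinct with $\set{u,v}, \set{v,w} \notin E$, whence the contrapositive above gives $\set{u,w} \notin E$ and therefore $u \sim w$.

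Let $C_1, \ldots, C_k$ be the equivalence classes. By construction, distinct vertices inside a class are non-adjacent, so each $C_i$ is an independent set; conversely, two vertices lying in different classes must be adjacent, for otherwise they would be $\sim$-related. Because $E \neq \emptyset$, any edge has its endpoints in different classes, so $k \geq 2$. Setting $V_1 = C_1$, $V_2 = C_2$, and $V_3 = C_3 \cup \cdots \cup C_k$ (possibly empty) then yields the desired partition; the cross-part condition involving $V_3$ holds automatically (vacuously when $V_3$ is empty). The only real content of the argument is the correct low-dimensional application of the exchange axiom, and I do not anticipate a serious obstacle beyond this.
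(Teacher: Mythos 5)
Your proof is correct, and it takes a genuinely different route from the paper's. The paper fixes a concrete edge $\{u,v\}\in E$ and sorts every other vertex into three buckets according to whether it is adjacent to $v$ but not $u$, to $u$ but not $v$, or to both; it then verifies by several case analyses that the first two buckets are independent and that all the required cross-adjacencies hold, finally setting $V_1 = V_u\cup\{v\}$, $V_2 = V_v\cup\{u\}$, $V_3 = V_{\{u,v\}}$. Your argument instead observes that a single application of the exchange axiom (to a singleton and an edge) shows non-adjacency is transitive among distinct vertices, so ``$u=v$ or $\{u,v\}\notin E$'' is an equivalence relation; the equivalence classes are the parts of a complete multipartite decomposition, and you take two of them plus the union of the rest. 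Both are correct, but your version is arguably cleaner and buys more: it exhibits the full complete-multipartite structure of $G$ in one step, which is precisely the fact the next lemma (Lemma~\ref{lem:expansion-of-d-2-links}) needs — the paper instead re-derives that structure by applying Lemma~\ref{lem:exchange-division-lemma} recursively, peeling off two parts at a time. If one is willing to state the complete-multipartite conclusion directly, your equivalence-relation argument short-circuits that recursion.
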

\begin{proof}
    Let $\set{u,v} \in E$ and consider the following sets:
    \begin{align*}
        V_u = \set{w \suchthat \set{v,w} \in E, \set{u,w} \notin E}&, \quad V_v = \set{w \suchthat \set{u,w} \in E, \set{v,w} \notin E}\\
        V_{\set{u,v}} = & \set{w \suchthat \set{v,w} \in E, \set{u,w} \in E}
    \end{align*}

    We will show that $V_u$ is independent, the same face about $V_v$ is analogous:
    Assume that $V_u$ is not independent therefore there exists $w, w' \in V_u$ such that $\set{w,w'} \in E$.
    Therefore, due to the exchange property either $\set{v,w} \in E$ or $\set{v,w'} \in E$ which contradicts our choice of $w$.

    We will start by showing that any vertex in $V_u$ is connected to every vertex in $V_v$:
    Let $u' \in V_u$ and $v' \in V_v$.
    $u' \in V_u$ therefore $\set{u',u} \in E$.
    Applying the exchange property to $\set{u',u}$ and $w'$ yields that either $\set{v',v} \in E$ or $\set{u',v'} \in E$.
    Therefore due to the definition of $V_u$ it holds that $\set{v',v} \notin E$ and thus $\set{u',v'} \in E$.

    We follow by showing that any vertex is $V_u$ is connected to any vertex in $V_{\set{u,v}}$ (the case of $V_v$ is analogous):
    Let $u' \in V_u$ and $w \in V_{\set{u,v}}$.
    $w \in V_{\set{u,v}}$ therefore $\set{w, v} \in E$.
    Applying the exchange property to $\set{w,v}$ and $u'$ yields that either $\set{w,u'} \in E$ or $\set{v,u'} \in E$.
    Therefore due to the definition of $V_u$ it holds that $\set{u',v} \notin E$ and thus $\set{w,u'} \in E$.

    We finish the proof by setting:
    \[
        V_1 = V_u \cup \set{v}, \quad V_2 = V_v \cup \set{u}, \quad V_3 = V_{\set{u,v}}
    \]
    And noting that the properties listed in the lemma hold for these sets.
\end{proof}
We will use this Lemma to prove the following:
\begin{lemma}[Graphs with the exchange property are $0$-spectral-expanders]\label{lem:expansion-of-d-2-links}
Let $G$ be a pure graph that satisfies the exchange property then $G$ is a $0$-spectral-expander.
\end{lemma}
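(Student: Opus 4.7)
The plan is to first show that any graph satisfying the exchange property must be complete multipartite, and then to verify the spectral condition by a Cauchy--Schwarz estimate.

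For the structural step, I would observe that the relation on $V$ defined by ``$u \sim v$ iff $u = v$ or $\set{u,v} \notin E$'' is an equivalence relation. Reflexivity and symmetry are immediate; for transitivity, suppose $u,v,w$ are distinct with $\set{u,v}, \set{v,w} \notin E$ while $\set{u,w} \in E$. Applying the exchange property to the edge $\set{u,w}$ and the vertex $v \notin \set{u,w}$ forces either $\set{u,v}$ or $\set{v,w}$ to be an edge, a contradiction. The equivalence classes $V_1,\ldots,V_k$ are thus independent sets, and by construction every pair in distinct classes is an edge, so $G$ is the complete $k$-partite graph on parts of sizes $n_i = \abs{V_i}$. (One could alternatively derive this by iterating Lemma~\ref{lem:exchange-division-lemma} on the induced subgraph of $V_3$, which again satisfies the exchange property.)

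For the spectral step, I would use the default uniform edge weights $\weight[E][e] = 1/\abs{E}$, which give vertex weights $\weight[u] = (N - n_i)/(2\abs{E})$ for $u \in V_i$, where $N = \abs{V}$. For a function $f : V \to \R$, letting $F_i = \sum_{u \in V_i} f(u)$ and $F = \sum_i F_i$, direct computation using the complete multipartite structure yields
\[
\innerprod{Af, f} = \sum_{\set{u,v} \in E} \weight[\set{u,v}] f(u) f(v) = \frac{1}{\abs{E}} \sum_{i<j} F_i F_j = \frac{F^2 - \sum_i F_i^2}{2\abs{E}},
\]
while the orthogonality $\innerprod{f,\mathbbm{1}} = 0$ in the weighted inner product reduces to $\sum_i (N - n_i) F_i = 0$, i.e.\ $\alpha \cdot F = 0$ with $\alpha = (N - n_1,\ldots,N - n_k)$.

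It then suffices to prove $F^2 \leq \sum_i F_i^2$ subject to $\alpha \cdot F = 0$. Letting $\mathbbm{1}_\alpha$ denote the orthogonal projection of $\mathbbm{1} = (1,\ldots,1)$ onto $\Span(\alpha)$, the constraint gives $F = \mathbbm{1} \cdot F = (\mathbbm{1} - \mathbbm{1}_\alpha) \cdot F$, and Cauchy--Schwarz yields $F^2 \leq \norm{\mathbbm{1} - \mathbbm{1}_\alpha}^2 \sum_i F_i^2$; the required bound $\norm{\mathbbm{1} - \mathbbm{1}_\alpha}^2 \leq 1$ reduces by direct algebra to the elementary inequality $\sum_i n_i^2 \leq N^2$. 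The main obstacle is the structural step: recognising that the exchange property forces non-adjacency to be transitive, and hence $G$ to be complete multipartite, is the crux; once this is in hand, the spectral calculation is essentially a mechanical Cauchy--Schwarz computation.
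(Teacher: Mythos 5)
Your proof is correct, and it differs from the paper's on both steps. For the structural step, the paper builds the multipartite partition by iterating Lemma~\ref{lem:exchange-division-lemma} (peeling off $V_1$, $V_2$ and recursing on $V_3$); you instead observe directly that the exchange property makes non-adjacency a transitive (hence equivalence) relation, which is a cleaner route to the same conclusion, and you correctly note the recursive alternative. For the spectral step, the paper invokes Cauchy's Interlacing Theorem on the unweighted adjacency matrix and its complement (a disjoint union of cliques) to show one positive eigenvalue, then argues that normalization by a positive matrix preserves inertia; you instead compute the Rayleigh quotient of the weighted, normalized operator directly: you express $\innerprod{Af,f}$ in terms of the part-sums $F_i$, rewrite the orthogonality-to-$\mathbbm{1}$ constraint as the linear condition $\alpha \cdot \vec F = 0$ with $\alpha_i = N - n_i$, and close with Cauchy--Schwarz against the projection of $\mathbbm{1}$ off $\alpha$, reducing to $\sum_i n_i^2 \le N^2$. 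Your argument is more elementary and self-contained, works directly with the operator actually appearing in the definition rather than passing through the unweighted matrix, and makes explicit the uniform-edge-weight assumption (which the paper leaves implicit but which does hold for the $(d-2)$-links of a matroid complex under the uniform distribution on bases, the case where the lemma is applied; with genuinely non-uniform edge weights a complete multipartite graph need not be a $0$-spectral expander, as $K_{2,2}$ with skewed weights shows). The paper's interlacing argument is shorter once the inertia facts are taken as known, but it is stated tersely and leans on the inertia of the unweighted matrix rather than the weighted one.
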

\begin{proof}
    We will show that $G$ is a complete partite graph and therefore a $0$-spectral-expander. 
    Using Cauchy's Interlacing Theorem on the non-normalized adjacency matrix and its complement yields that the non-normalized matrix's second eigenvalue is bounded from above by $0$. The normalization can then be performed by noting that it is equivalent to multiplication from the right by a positive semidefinite matrix. Then one can use Cauchy's Interlacing Theorem again in order to show that the resulting matrix has at most one positive eigenvalue.
    
    We will do so by constructing said partition using recursive application of Lemma~\ref{lem:exchange-division-lemma}.
    Start by setting $\tilde{V} = V$ and set the partition to be $\mathcal{V} = \emptyset$.
    While there are edges in the subgraph induced by $\tilde{V}$ has edges do the following: apply Lemma~\ref{lem:exchange-division-lemma} to $\tilde{V}$, add $V_1$ and $V_2$ to $\mathcal{V}$ and set $\tilde{V} = V_3$.
    Note that every set in $\mathcal{V}$ is independent due to Lemma~\ref{lem:exchange-division-lemma}.
    Also note that if $U_1, U_2 \in \mathcal{V}$ are two different sets then every vertex in $U_1$ is connected to every vertex in $U_2$.
    Therefore $G$ is a complete partite graph.
\end{proof}
Now all we have to check is that all of the links' $1$-skeletons are connected, as we will in the following Lemma:
\begin{lemma}[Connectivity of Graphs that Satisfy the Exchange Property]\label{lem:connectivity-of-exhnage-graphs}
Let $X$ be a pure simplicial that satisfies the exchange property then $X^{(1)}$ is connected.
\end{lemma}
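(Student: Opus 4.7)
The plan is to show that any two vertices $\stdvertex, \genvertex \in \stdcomplex(0)$ lie at distance at most $2$ in $\skeleton{\stdcomplex}{1}$, by using the exchange property to bridge from $\stdvertex$ into a maximal face containing $\genvertex$. Assume $\stdcomplex$ has dimension $d \geq 1$ (otherwise $\stdcomplex^{(1)}$ is just a set of vertices and the statement is only nontrivial when $|X(0)|=1$).

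Let $\stdvertex, \genvertex \in \stdcomplex(0)$ be arbitrary. Since $\stdcomplex$ is pure, use purity to select a face $\stdface \in \stdcomplex(d)$ with $\genvertex \in \stdface$; note that $|\stdface| = d+1 \geq 2$. If $\stdvertex \in \stdface$, then $\set{\stdvertex, \genvertex} \subseteq \stdface$ is in $\stdcomplex$ by the simplicial complex property, and we are done. Otherwise apply the exchange property to $\set{\stdvertex} \in \stdcomplex(0)$ and $\stdface \in \stdcomplex(d)$: since $|\set{\stdvertex}| = 1 < d+1 = |\stdface|$, there exists $w \in \stdface \setminus \set{\stdvertex} = \stdface$ such that $\set{\stdvertex, w} \in \stdcomplex$. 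So $\stdvertex$ is adjacent to $w$ in $\skeleton{\stdcomplex}{1}$.

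Now $w$ and $\genvertex$ both lie in $\stdface$, so if $w \neq \genvertex$ then $\set{w, \genvertex} \subseteq \stdface$ is an edge of $\stdcomplex$; otherwise $w = \genvertex$ and we already have $\set{\stdvertex, \genvertex} \in \stdcomplex$. In either case, the path $\stdvertex - w - \genvertex$ (or the single edge $\stdvertex - \genvertex$) shows $\stdvertex$ and $\genvertex$ are connected in $\skeleton{\stdcomplex}{1}$. Since $\stdvertex, \genvertex$ were arbitrary, $\skeleton{\stdcomplex}{1}$ is connected.

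There is no real obstacle here; the proof is essentially one invocation of the exchange property, leveraging the fact that purity guarantees every vertex sits inside a top-dimensional face, and every top-dimensional face induces a clique in $\skeleton{\stdcomplex}{1}$. In particular, the diameter of $\skeleton{\stdcomplex}{1}$ is at most $2$, which is considerably stronger than mere connectivity.
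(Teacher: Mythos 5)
Your proof is correct and has the same core shape as the paper's: one application of purity to find a face, one application of the exchange property, yielding diameter at most $2$. The difference is where you work: the paper first passes to $\skeleton{\stdcomplex}{1}$, uses purity of the skeleton to find a neighbor $\tilde{\stdvertex}$ of $\stdvertex$, and then applies exchange \emph{inside the skeleton} to the pair $\set{\genvertex}$, $\set{\stdvertex, \tilde{\stdvertex}}$ (invoking the fact, stated just before the lemma, that skeletons inherit the exchange property); you instead stay in $\stdcomplex$, take a top-dimensional face $\stdface$ containing $\genvertex$, and apply exchange to $\set{\stdvertex}$ versus $\stdface$, then use that $\stdface$ induces a clique in $\skeleton{\stdcomplex}{1}$. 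Your route has the small advantage of not depending on the skeleton-inherits-exchange observation, at the cost of invoking the downward closure of $\stdcomplex$ to turn the pair $\set{w,\genvertex} \subseteq \stdface$ into an edge. Both are clean; yours is arguably slightly more self-contained.
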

\begin{proof}
    Let $\stdvertex,\genvertex \in X^{(1)}(0)$.
    $X^{(1)}$ is pure therefore there exists $\tilde{\stdvertex}$ such that $\set{\stdvertex, \tilde{\stdvertex}} \in E$.
    $X^{(1)}$ is a skeleton of a complex that satisfies the exchange property and therefore it satisfies the exchange property as well, meaning that wither $\set{\stdvertex, \genvertex} \in E$ or $\set{\tilde{\stdvertex}, \genvertex} \in E$.
    Therefore $\stdvertex$ and $\genvertex$ either are connected directly or through $\tilde{\stdvertex}$ and thus $X^{(1)}$ is connected.
\end{proof}
Therefore we can conclude that:
\begin{theorem}
    If $M=(X,\mathcal{I})$ is a matroid then $\mathcal{I}$ is a $0$-local spectral expander.
\end{theorem}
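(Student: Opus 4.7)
The plan is to assemble the three ingredients already established: the closure of the exchange property under taking links/skeletons, the fact that exchange graphs are $0$-spectral expanders (Lemma about expansion of codimension $2$ links), the connectivity lemma, and the Trickling Down Theorem. The idea is that $\lambda = 0$ is a fixed point of the map $\lambda \mapsto \frac{\lambda}{1-\lambda}$ appearing in Theorem~\ref{thm:trickling-down-theorem}, so once we know the ``bottom of the tower'' (links of codimension $2$) consists of $0$-spectral expanders, trickling down preserves this all the way up to the whole complex.

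First I would note that the simplicial complex $\mathcal{I}$ of independent sets of $M$ satisfies the exchange property, and that this property is inherited by every link $\mathcal{I}_\tau$ and by every skeleton $\skeleton{\mathcal{I}_\tau}{j}$. This is because if $\stdface, \genface \in \mathcal{I}_\tau$ with $|\stdface|<|\genface|$, then $\tau\cup \stdface, \tau \cup \genface \in \mathcal{I}$ still satisfy the hypothesis of the exchange axiom, and the exchange vertex lies in $\genface\setminus \stdface$. Consequently, every link that appears in the definition of local spectral expansion of $\mathcal{I}$ still satisfies the exchange property.

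Next I would establish the base case of the induction. Let $d=\dim \mathcal{I}$ and let $\tau \in \mathcal{I}(d-2)$. Then $\mathcal{I}_\tau$ is a pure $1$-dimensional simplicial complex, so $\skeleton{\mathcal{I}_\tau}{1}=\mathcal{I}_\tau$ is a graph satisfying the exchange property. By Lemma~\ref{lem:connectivity-of-exhnage-graphs} it is connected, and by Lemma~\ref{lem:expansion-of-d-2-links} it is a $0$-spectral expander. Thus the two local spectral conditions hold at the bottom level.

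I would then induct downward on $i=d-3, d-4,\ldots,0,-1$. Suppose for every $\tau \in \mathcal{I}(i+1)$ the link $\mathcal{I}_\tau$ is a $0$-local spectral expander. Fix $\tau \in \mathcal{I}(i)$. For each vertex $v$ of $\mathcal{I}_\tau$, the link $(\mathcal{I}_\tau)_v = \mathcal{I}_{\tau\cup\{v\}}$ is a $0$-local spectral expander by the inductive hypothesis, and $\skeleton{\mathcal{I}_\tau}{1}$ is connected by Lemma~\ref{lem:connectivity-of-exhnage-graphs}. Theorem~\ref{thm:trickling-down-theorem} applied to $\mathcal{I}_\tau$ with $\lambda=0$ yields $\lambda_2(\skeleton{\mathcal{I}_\tau}{1}) \le \frac{0}{1-0}=0$, and the link conditions at higher codimension inside $\mathcal{I}_\tau$ follow from the inductive hypothesis. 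Taking $i=-1$ gives the conclusion for $\mathcal{I}$ itself.

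The only delicate point is verifying that the hypotheses of the Trickling Down Theorem are satisfied at every inductive step, which is precisely the combination of Lemma~\ref{lem:connectivity-of-exhnage-graphs} (connectivity of all relevant $1$-skeletons) and the inductive assumption that vertex links are already $0$-local spectral expanders. Once this is in place, the fact that $0$ is a fixed point of the trickling-down recurrence is what makes the argument clean, and no additional loss is incurred as we climb from codimension $2$ up to the full complex.
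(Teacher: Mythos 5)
Your proposal is correct and follows essentially the same route as the paper's proof, which itself just says to combine the two lemmas on exchange graphs with the Trickling Down Theorem. You have merely spelled out the downward induction and the observation that $\lambda=0$ is a fixed point of $\lambda\mapsto\frac{\lambda}{1-\lambda}$, both of which are implicit in the paper's one-line proof.
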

\begin{proof}
    Combining Lemma~\ref{lem:expansion-of-d-2-links}, Lemma~\ref{lem:connectivity-of-exhnage-graphs} and the tricking down theorem proves this theorem.
\end{proof}
Therefore high dimensional expanders can be used to resolve the Mihail-Vazirani conjecture simply by noting the following:
The base exchange walk is the down-up walk on the maximal dimension of a simplicial complex that exhibits the exchange property.
Any simplicial complex that exhibits the exchange property is a $0$-local spectral expander and therefore, as we presented in Theorem~\ref{thm:ko-random-walks} the second largest eigenvalue of the down-up walk is smaller than $1-\frac{1}{k+1}$ which is bounded away from $1$.
Therefore the base exchange walk converges rapidly.

We end this chapter by noting that there are strong connections between counting and sampling for self reducible problems~\cite{jerrum1986random} and therefore being able to sample a random basis for the matroid also proves the existence of a randomized algorithm that estimates the number of bases the matroid has.

\section{Additional topics that are not covered in this note}\label{Sec:additional}
Before ending this note we mention some topics related to high dimensional expansion that were \emph{not} covered in this note. These topics include: discussion of explicit constructions of high dimensional expanders. In particular bounded degree constructions. Here we note that currently there are no combinatorial constructions of strong local spectral expanders, but only algebraic ones. 
High dimensional expanders beyond simplicial complexes, for example the Grassmanian complex and its properties. Concentration of measure via high dimensional expanders, Fourier analysis and hypercontractivity on high dimensional expanders; Agreement expanders and low degree testing via high dimensional expanders, Super fast mixing of Markov chains and Glauber dynamics via strong high dimensional expanders. Unique Games and high dimensional expanders.

\begin{funding}
Research supported by ERC and BSF.
\end{funding}

\newcommand{\newblock}{\ }
\bibliographystyle{alpha}
\bibliography{bibliography}

\newcommand{\etalchar}[1]{$^{#1}$}
\begin{thebibliography}{DDHRZ20}

\bibitem[AL20]{alev2020improved}
Vedat~Levi Alev and Lap~Chi Lau.
\newblock Improved analysis of higher order random walks and applications.
\newblock In {\em Proceedings of the 52nd Annual ACM SIGACT Symposium on Theory
  of Computing}, pages 1198--1211, 2020.

\bibitem[ALG20]{anari2021spectral}
Nima Anari, Kuikui Liu, and Shayan~Oveis Gharan.
\newblock Spectral independence in high-dimensional expanders and applications
  to the hardcore model.
\newblock In Sandy Irani, editor, {\em 61st {IEEE} Annual Symposium on
  Foundations of Computer Science, {FOCS} 2020, Durham, NC, USA, November
  16-19, 2020}, pages 1319--1330. {IEEE}, 2020.

\bibitem[ALGV19]{anari2019log}
Nima Anari, Kuikui Liu, Shayan~Oveis Gharan, and Cynthia Vinzant.
\newblock Log-concave polynomials ii: high-dimensional walks and an fpras for
  counting bases of a matroid.
\newblock In {\em Proceedings of the 51st Annual ACM SIGACT Symposium on Theory
  of Computing}, pages 1--12, 2019.

\bibitem[BE21]{NE21}
Nikolas~P. Breuckmann and Jens~N. Eberhardt.
\newblock Balanced product quantum codes.
\newblock {\em IEEE Transactions on Information Theory}, 67:6653–6674, Oct
  2021.

\bibitem[CGM19]{cryan2019modified}
Mary Cryan, Heng Guo, and Giorgos Mousa.
\newblock Modified log-sobolev inequalities for strongly log-concave
  distributions.
\newblock In {\em 2019 IEEE 60th Annual Symposium on Foundations of Computer
  Science (FOCS)}, pages 1358--1370. IEEE, 2019.

\bibitem[CS96]{CSS}
A~Robert Calderbank and Peter~W Shor.
\newblock Good quantum error-correcting codes exist.
\newblock {\em Physical Review A}, 54(2):1098, 1996.

\bibitem[DD19]{dikstein2019agreement}
Yotam Dikstein and Irit Dinur.
\newblock Agreement testing theorems on layered set systems.
\newblock In {\em 2019 IEEE 60th Annual Symposium on Foundations of Computer
  Science (FOCS)}, pages 1495--1524. IEEE, 2019.

\bibitem[DDHRZ20]{dikstein2020locally}
Yotam Dikstein, Irit Dinur, Prahladh Harsha, and Noga Ron-Zewi.
\newblock Locally testable codes via high-dimensional expanders.
\newblock {\em arXiv preprint arXiv:2005.01045}, 2020.

\bibitem[DEL{\etalchar{+}}21]{dinur2021}
Irit Dinur, Shai Evra, Ron Livne, Alexander Lubotzky, and Shahar Mozes.
\newblock Locally testable codes with constant rate, distance, and locality.
\newblock {\em arXiv preprint arXiv:2111.04808}, 2021.

\bibitem[DK17]{dinur2017high}
Irit Dinur and Tali Kaufman.
\newblock High dimensional expanders imply agreement expanders.
\newblock In {\em 2017 IEEE 58th Annual Symposium on Foundations of Computer
  Science (FOCS)}, pages 974--985. IEEE, 2017.

\bibitem[DKW16]{DotterrerKW16}
Dominic Dotterrer, Tali Kaufman, and Uli Wagner.
\newblock On expansion and topological overlap.
\newblock In S{\'{a}}ndor~P. Fekete and Anna Lubiw, editors, {\em 32nd
  International Symposium on Computational Geometry, SoCG 2016, June 14-18,
  2016, Boston, MA, {USA}}, volume~51 of {\em LIPIcs}, pages 35:1--35:10.
  Schloss Dagstuhl - Leibniz-Zentrum f{\"{u}}r Informatik, 2016.

\bibitem[EK16]{EvraK16}
Shai Evra and Tali Kaufman.
\newblock Bounded degree cosystolic expanders of every dimension.
\newblock In {\em Proceedings of the 48th Annual {ACM} {SIGACT} Symposium on
  Theory of Computing, {STOC} 2016, Cambridge, MA, USA, June 18-21, 2016},
  pages 36--48. {ACM}, 2016.

\bibitem[EKZ20]{EvraKZ20}
Shai Evra, Tali Kaufman, and Gilles Z{\'{e}}mor.
\newblock Decodable quantum {LDPC} codes beyond the square root distance
  barrier using high dimensional expanders.
\newblock In {\em 61st {IEEE} Annual Symposium on Foundations of Computer
  Science, {FOCS} 2020, Durham, NC, USA, November 16-19, 2020}, pages 218--227,
  2020.

\bibitem[FK22]{FirstKaufman22}
Uriya~A. First and Tali Kaufman.
\newblock On good 2-query locally testable codes from sheaves on high
  dimensional expanders.
\newblock {\em CoRR}, abs/2208.01778, 2022.

\bibitem[FML02]{FML}
Michael~H Freedman, David~A Meyer, and Feng Luo.
\newblock Z2-systolic freedom and quantum codes.
\newblock {\em Mathematics of quantum computation, Chapman \& Hall/CRC}, pages
  287--320, 2002.

\bibitem[FN02]{friedland2002cheeger}
Shmuel Friedland and Reinhard Nabben.
\newblock On cheeger-type inequalities for weighted graphs.
\newblock {\em Journal of Graph Theory}, 41(1):1--17, 2002.

\bibitem[Gal]{GAL}
R.~G. Gallager.
\newblock Low density parity check codes.
\newblock {\em MIT Press, {\em Cambridge, MA, 1963}}.

\bibitem[Gar73]{garland1973p}
Howard Garland.
\newblock p-adic curvature and the cohomology of discrete subgroups of p-adic
  groups.
\newblock {\em Annals of Mathematics}, pages 375--423, 1973.

\bibitem[GK22a]{https://doi.org/10.48550/arxiv.2208.03241}
Roy Gotlib and Tali Kaufman.
\newblock Fine grained analysis of high dimensional random walks, 2022.

\bibitem[GK22b]{GotlibKaufman22}
Roy Gotlib and Tali Kaufman.
\newblock List agreement expansion from coboundary expansion, 2022.

\bibitem[Gol21]{golowich2021improved}
Louis Golowich.
\newblock Improved product-based high-dimensional expanders.
\newblock {\em arXiv preprint arXiv:2105.09358}, 2021.

\bibitem[Gro10]{Gro10}
Mikhail Gromov.
\newblock Singularities, expanders and topology of maps. part 2: From
  combinatorics to topology via algebraic isoperimetry.
\newblock {\em Geometric and Functional Analysis}, 20(2):416--526, 2010.

\bibitem[HHO]{HHO}
M.~B Hastings, J.~Haah, and R.~O'Donnell.
\newblock Fiber bundle codes: Breaking the n\({}^{\mbox{1/2}}\) polylog(n)
  barrier for quantum {LDPC} codes.
\newblock In {\em arXiv preprint arXiv:2009.03921, 2020, FOCS 2021 to appear.}

\bibitem[HLW06]{hoory06}
Shlomo Hoory, Nathan Linial, and Avi Wigderson.
\newblock Expander graphs and their applications.
\newblock {\em Bull. Amer. Math. Soc.}, 43(04):439–562, August 2006.

\bibitem[JVV86]{jerrum1986random}
Mark~R Jerrum, Leslie~G Valiant, and Vijay~V Vazirani.
\newblock Random generation of combinatorial structures from a uniform
  distribution.
\newblock {\em Theoretical computer science}, 43:169--188, 1986.

\bibitem[Kit03]{Toric}
A.~Y. Kitaev.
\newblock Fault-tolerant quantum computation by anyons.
\newblock {\em Ann. Phys.}, 303:2:2--30, 2003.

\bibitem[KKL16]{kaufman2016isoperimetric}
Tali Kaufman, David Kazhdan, and Alexander Lubotzky.
\newblock Isoperimetric inequalities for ramanujan complexes and topological
  expanders.
\newblock {\em Geometric and Functional Analysis}, 26(1):250--287, 2016.

\bibitem[KL14]{KaufmanLubotzky14}
Tali Kaufman and Alexander Lubotzky.
\newblock High dimensional expanders and property testing.
\newblock In {\em Proceedings of the 5th conference on Innovations in
  theoretical computer science}, pages 501--506, 2014.

\bibitem[KM17]{KaufmanM17}
Tali Kaufman and David Mass.
\newblock High dimensional random walks and colorful expansion.
\newblock In {\em 8th Innovations in Theoretical Computer Science Conference,
  {ITCS} 2017, January 9-11, 2017, Berkeley, CA, {USA}}, volume~67 of {\em
  LIPIcs}, pages 4:1--4:27, 2017.

\bibitem[KM20]{KaufmanM20}
Tali Kaufman and David Mass.
\newblock Local-to-global agreement expansion via the variance method.
\newblock In Thomas Vidick, editor, {\em 11th Innovations in Theoretical
  Computer Science Conference, {ITCS} 2020, January 12-14, 2020, Seattle,
  Washington, {USA}}, volume 151 of {\em LIPIcs}, pages 74:1--74:14, 2020.

\bibitem[KM21]{KaufmanM21}
Tali Kaufman and David Mass.
\newblock Unique-neighbor-like expansion and group-independent cosystolic
  expansion.
\newblock In {\em 32nd International Symposium on Algorithms and Computation,
  {ISAAC} 2021, December 6-8, 2021, Fukuoka, Japan}, volume 212 of {\em
  LIPIcs}, pages 56:1--56:17, 2021.

\bibitem[KO18]{kaufman2018high}
Tali Kaufman and Izhar Oppenheim.
\newblock High order random walks: Beyond spectral gap.
\newblock In {\em Approximation, Randomization, and Combinatorial Optimization.
  Algorithms and Techniques (APPROX/RANDOM 2018)}. Schloss
  Dagstuhl-Leibniz-Zentrum f{\"u}r Informatik, 2018.

\bibitem[KO21]{kaufman2021high}
Tali Kaufman and Izhar Oppenheim.
\newblock High dimensional expansion implies amplified local testability.
\newblock {\em arXiv preprint arXiv:2107.10488}, 2021.

\bibitem[KS08]{DBLP:conf/stoc/KaufmanS08}
Tali Kaufman and Madhu Sudan.
\newblock Algebraic property testing: the role of invariance.
\newblock In Cynthia Dwork, editor, {\em Proceedings of the 40th Annual {ACM}
  Symposium on Theory of Computing, Victoria, British Columbia, Canada, May
  17-20, 2008}, pages 403--412. {ACM}, 2008.

\bibitem[KT21a]{kaufman2021local}
Tali Kaufman and Ran~J Tessler.
\newblock Local to global high dimensional expansion and garland's method for
  general posets.
\newblock {\em arXiv preprint arXiv:2101.12621}, 2021.

\bibitem[KT21b]{KaufmanT21quantum}
Tali Kaufman and Ran~J. Tessler.
\newblock New cosystolic expanders from tensors imply explicit quantum {LDPC}
  codes with {\(\Omega\)}({\(\surd\)}\emph{n} log\({}^{\mbox{\emph{k}}}\)
  \emph{n}) distance.
\newblock In {\em {STOC} '21: 53rd Annual {ACM} {SIGACT} Symposium on Theory of
  Computing, Virtual Event, Italy, June 21-25, 2021}, pages 1317--1329, 2021.

\bibitem[LM06]{LM06}
Nathan Linial and Roy Meshulam.
\newblock Homological connectivity of random 2-complexes.
\newblock {\em Combinatorica}, 26(4):475--487, 2006.

\bibitem[LSV05]{lubotzky2005explicit}
Alexander Lubotzky, Beth Samuels, and Uzi Vishne.
\newblock Explicit constructions of ramanujan complexes of type $a_d$.
\newblock {\em European Journal of Combinatorics}, 26(6):965--993, 2005.

\bibitem[MV89]{mihail1989expansion}
Milena Mihail and Umesh Vazirani.
\newblock On the expansion of 0/1 polytopes.
\newblock {\em Journal of Combinatorial Theory. B}, 1989.

\bibitem[Opp18]{oppenheim2018local}
Izhar Oppenheim.
\newblock Local spectral expansion approach to high dimensional expanders part
  i: Descent of spectral gaps.
\newblock {\em Discrete \& Computational Geometry}, 59(2):293--330, 2018.

\bibitem[PK21]{PK21}
Pavel Panteleev and Gleb Kalachev.
\newblock Asymptotically good quantum and locally testable classical {LDPC}
  codes.
\newblock {\em CoRR}, abs/2111.03654, 2021.

\end{thebibliography}

\end{document}